\newtheorem{theo}{Theorem}[section]
\newtheorem{prop}[theo]{Proposition}
\newtheorem{lemm}[theo]{Lemma}
\newtheorem{coro}[theo]{Corollary}
\newtheorem{rema}[theo]{Remark}
\newtheorem{Defi}[theo]{Definition}
\newtheorem{conj}[theo]{Conjecture}
\newtheorem{question}[theo]{Question}
\newtheorem{example}[theo]{Example}
\title{Triangle varieties and surface decomposition of    hyper-K\"ahler manifolds}
\author{Claire Voisin}
\date{}
\newfont{\gothic}{eufb10}
\begin{document}
\maketitle
\setcounter{section}{-1}

\begin{abstract}  We introduce and study  the notion of
``surface decomposable'' variety, and discuss the possibility that any projective hyper-K\"ahler manifold is surface decomposable, which would produce new evidence for Beauville's weak splitting conjecture. We show that surface decomposability relates to
the Beauville-Fujiki relation, a constraint on the cohomology ring of the variety,  and that   general varieties with $h^{2,0}\not=0$
are not surface decomposable.
We also formalize the notion of triangle variety that is  useful to produce surface decomposition.
We show the existence of these  geometric structures on most explicitly constructed  classes of projective hyper-K\"ahler manifolds of Picard number $1$.

\end{abstract}
\section{Introduction}
Let $X$ be a complex manifold of dimension $2n$ equipped with a
holomorphic $2$-form $\sigma_X$ which is everywhere of maximal rank $2n$.
Locally for the Euclidean  topology on $X$, Darboux theorem tells that
one can write, for an adequate choice of holomorphic coordinates,
$$\sigma_X=\sum_{i=1}^{n} dz_i\wedge dz_{n+i},$$
that is, $\sigma_X$ is the sum of $n$ closed holomorphic $2$-forms of rank $2$.

A natural question is whether this statement can be made more global, particularly in the projective case:

\begin{question} \label{question01} Does there exist a generically finite cover
$\phi: Y\rightarrow X$, such that $\phi^*\sigma_X$ is the sum of
$n$ closed holomorphic $2$-forms of rank $2$  on $Y$?
\end{question}

Our goal in this paper is to study a geometric variant of this question,
 namely the possibility that any projective hyper-K\"ahler manifold is {\it surface decomposable}
 (or admits a {\it surface decomposition})
in the following sense:
\begin{Defi}\label{defiintrodecomp} A  smooth projective variety $X$ of dimension $2n$ will
 be said surface decomposable if
there exist a smooth variety $\Gamma$, smooth projective surfaces $S_1,\ldots,\,S_n$, and
generically finite surjective morphisms
$\phi:\Gamma\rightarrow X,\,\psi:\Gamma\rightarrow S_1\times\ldots\times\,S_n$ such that for any
holomorphic $2$-form $\sigma\in H^0(X,\Omega_X^2)$,
\begin{eqnarray}\label{eqpourdecintro} \phi^*\sigma=\psi^*(\sum_i{\rm pr}_i^*\sigma_i)
\end{eqnarray}
for some holomorphic $2$-forms $\sigma_i$ on $S_i$.

\end{Defi}
We will show that surface decomposability is restrictive for general projective varieties of dimension
$2n\geq 4$ (see Theorem \ref{coronotdecomp}).
The first examples of hyper-K\"ahler manifolds were constructed by Beauville \cite{beaujdg} and Fujiki
\cite{fujiki} as punctual Hilbert schemes of  $K3$ surfaces
or abelian surfaces and hence were rationally dominated by products of surfaces. They were thus obviously
surface decomposable.  However, it follows from deformation theory that these $K3$ or abelian
surfaces disappear under a general deformation to a projective hyper-K\"ahler manifold with Picard number $1$. Indeed, the parameter space
for $K3$ surface is too small to parameterize also general deformations with Picard number $1$  of their punctual Hilbert schemes. The starting point of this paper is
the observation that on many explicitly described general deformations as above, a surface decomposition still exists.

Let us make several  remarks concerning Definition \ref{defiintrodecomp}. First of all,
the condition (\ref{eqpourdecintro}) has been asked only for holomorphic $2$-forms, but
 by an elementary argument
involving Hodge structures (see Section \ref{secdecomp}), it then follows that it is satisfied for any
transcendental class $\eta\in H^2(X,\mathbb{Q})_{tr}$, the later space
 being defined as the smallest Hodge substructure of $H^2(X,\mathbb{Q})$ whose
 complexification contains $H^{2,0}(X)$.

Next, if
we allow an arbitrarily large number $N$ of summands $S_i$ and
 only  ask that $\phi$ is surjective and $\psi$ is generically finite, then
the definition is (at least conjecturally) not restrictive since  the property is satisfied by any smooth projective variety $X$ satisfying the Lefschetz standard conjecture in degree $2$ (see Proposition \ref{prolefschetz}).
Similarly, we could consider, instead of (\ref{eqpourdecintro}), the weaker  condition
that $\phi$ is surjective  generically finite and
\begin{eqnarray}\label{eqweakdecomp} \sigma=\phi_*(\psi^*(\sum_i{\rm pr}_i^*\sigma_i))\,\,{\rm in}\,\,H^{2,0}(X),
\end{eqnarray}
but, as before, this is implied by Lefschetz standard conjecture
 (and we can then  take $n=1$ to achieve (\ref{eqweakdecomp})).

The
reason why (\ref{eqweakdecomp}) is much weaker than (\ref{eqpourdecintro}) is the fact that
 pull-back maps
are compatible with cup-products, while push-forward maps are not. More precisely, we will
show  (see Proposition \ref{propquadricfujiki}) that  the surface decomposability implies and provides  a geometric explanation for
Beauville-Fujiki's famous formula for the self-intersection of degree $2$ cohomology on a
hyper-K\"ahler manifold:
$$\int_X\alpha^{2n}=\lambda q(\alpha)^n,$$
where $q$ is  the Beauville-Bogomolov quadratic form on $H^2(X,\mathbb{Q})_{tr}$ (see \cite{beaujdg}). To prove this implication, we have to assume that
the Mumford-Tate group of the Hodge structure on  $H^2(X,\mathbb{Q})_{tr}$ is large enough to guarantee that
 all the quadratic forms on $H^2(X,\mathbb{Q})_{tr}$
 induced from $(\,,\,)_{S_i}$ via the morphism of Hodge structures $\psi_i: \eta\mapsto \eta_i$
 (see Section \ref{secdecomp})
 are proportional, but this is automatic if $X$ is the general member of a family of polarized hyper-K\"ahler manifolds.

 This observation suggests that surface decomposability could be a way to approach
 the weak splitting property of hyper-K\"ahler manifolds (see Conjecture \ref{conjws})
 conjectured by Beauville in \cite{beau}. It says that cohomological
  polynomial relations between divisors on
 hyper-K\"ahler manifolds $X$ are satisfied on the Chow level. A weaker version asks that
 $X$ has a canonical $0$-cycle $o_X\in{\rm CH}_0(X)$ such that
 $D^{2n}$ is proportional to $o_X$ in ${\rm CH}_0(X)$ for any divisor $D$ on $X$.

 In this direction, we prove  Theorem \ref{prodecompws} which has the following
 consequence:
  \begin{theo} \label{theoconsdeprodecompws} Assume that
 the general member $X_t$  of a  family $(X_t)_{t\in B}$ of hyper-K\"ahler manifolds
 with given Picard lattice $\Lambda$ has  a surface decomposition  with a simply connected $\Gamma$.
 Then  the
 Beauville  weak splitting property holds   for  the divisor  classes belonging to
 ${\rm NS}(X_t)^{\perp \Lambda}$  if and only if there exists a $0$-cycle
 $o_{X_t}\in {\rm CH}_0(X_t)$ such that
 $D^{2n}$ is proportional to $o_{X_t}$ in ${\rm CH}_0(X_t)$ for any divisor
 $D\in {\rm NS}(X_t)^{\perp \Lambda}$.
 \end{theo}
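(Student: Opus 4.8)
\noindent\emph{Proof plan.} The statement is an equivalence whose ``only if'' part is elementary, the ``if'' part being the substance (it is essentially a reformulation of Theorem~\ref{prodecompws}, applied to a general member of the family). Write $W:=\mathrm{NS}(X_t)^{\perp\Lambda}$; since $q$ is nondegenerate on $W$ we may assume $W\neq 0$ and fix $D_0\in W$ with $q(D_0)\neq 0$. For the \emph{``only if''} direction, assume the Beauville weak splitting property for the classes of $W$. By the Beauville--Fujiki relation $\int_{X_t}D^{2n}=\lambda\,q(D)^n$ and $H^{4n}(X_t,\mathbb Q)=\mathbb Q[\mathrm{pt}]$, for every $D\in W$ the identity $q(D_0)^nD^{2n}=q(D)^nD_0^{2n}$ holds in $H^{4n}(X_t,\mathbb Q)$ (both sides equal $\lambda\,q(D_0)^nq(D)^n[\mathrm{pt}]$); being a polynomial relation among classes of $W$, it holds in $\mathrm{CH}_0(X_t)_{\mathbb Q}$. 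With $o_{X_t}:=D_0^{2n}$, of degree $\lambda\,q(D_0)^n\neq 0$, this reads $D^{2n}=\tfrac{q(D)^n}{q(D_0)^n}\,o_{X_t}$, as required.

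For the \emph{``if''} direction, fix a surface decomposition $\phi\colon\Gamma\to X_t$, $\psi\colon\Gamma\to S_1\times\cdots\times S_n$ with $\Gamma$ simply connected, and let $R\subseteq\mathrm{CH}^*(X_t)_{\mathbb Q}$ be the subalgebra generated by $W$; we must show that the cycle class map is injective on $R$. Since $\phi_*\phi^*=\deg\phi\cdot\mathrm{id}$, the map $\phi^*$ is injective on $\mathrm{CH}^*(X_t)_{\mathbb Q}$ and cycle-class compatible, so it suffices to prove that $\phi^*R\to H^*(\Gamma,\mathbb Q)$ is injective. Simple connectedness of $\Gamma$ enters crucially here: it forces $\mathrm{Pic}^0(\Gamma)=0$, hence $\mathrm{CH}^1(\Gamma)_{\mathbb Q}\hookrightarrow H^2(\Gamma,\mathbb Q)$; consequently the defining identity (\ref{eqpourdecintro}) of the surface decomposition, which holds in cohomology for every $D\in W$ with suitable $(1,1)$-classes $d_{i,D}$ on the $S_i$, already holds in $\mathrm{CH}^1(\Gamma)_{\mathbb Q}$: choosing the $d_{i,D}\in\mathrm{NS}(S_i)_{\mathbb Q}$ linearly in $D$, one has $\phi^*D=\psi^*\bigl(\sum_i\mathrm{pr}_i^*d_{i,D}\bigr)$ in $\mathrm{CH}^1(\Gamma)_{\mathbb Q}$. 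Put $N_i:=\{d_{i,D}:D\in W\}\subseteq\mathrm{NS}(S_i)_{\mathbb Q}$ (the residual $\mathrm{Pic}^0(S_i)$ ambiguity in the Chow lifts dies after $\psi^*$).

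Applying the hypothesis to $\sum_j t_jD_j$ and letting the $t_j$ vary shows that every degree-$2n$ monomial in $W$ lies in $\mathbb Q\,o_{X_t}$ in $\mathrm{CH}_0(X_t)_{\mathbb Q}$; pulling back by $\phi$, expanding and keeping only the balanced terms on $S_1\times\cdots\times S_n$ (dimension reasons), and using injectivity of $\psi^*$ on $\mathrm{CH}_0$, it follows that every degree-$2n$ monomial in the classes $\{\sum_i\mathrm{pr}_i^*d_{i,D}\}$ on $S_1\times\cdots\times S_n$ lies in the line spanned by $\prod_i\mathrm{pr}_i^*(d_{i,D_0}^2)$. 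From this I would extract the \emph{per-surface} relations $N_i\cdot N_i\subseteq\mathbb Q\,o_{S_i}$ in $\mathrm{CH}_0(S_i)_{\mathbb Q}$, with $o_{S_i}:=d_{i,D_0}^2$ of nonzero degree, by specialising the remaining variables to $D_0$ and invoking multilinearity and injectivity of exterior products of $0$-cycles. Granting these, each subalgebra $\mathbb Q\oplus N_i\oplus\mathbb Q\,o_{S_i}$ of $\mathrm{CH}^*(S_i)_{\mathbb Q}$ (higher products vanishing since $\dim S_i=2$) maps isomorphically onto its image in $H^*(S_i,\mathbb Q)$; by Künneth their tensor product injects into $H^*(S_1\times\cdots\times S_n,\mathbb Q)$; and since $\phi^*R=\psi^*R'$ with $R'$ the subalgebra generated by the $\sum_i\mathrm{pr}_i^*d_{i,D}$ — which, after discarding the $\mathrm{Pic}^0$ directions killed by $\psi^*$, is carried by that tensor product — while $\psi^*$ is injective and cycle-class compatible, we conclude that $\phi^*R\to H^*(\Gamma,\mathbb Q)$ is injective.

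The two delicate steps are (i) upgrading the Hodge-theoretic identity (\ref{eqpourdecintro}) to one in $\mathrm{CH}^1(\Gamma)_{\mathbb Q}$, which is precisely what simple connectedness of $\Gamma$ provides, and (ii) deducing from the single hypothesis on $D^{2n}$ the per-surface relations $N_i\cdot N_i\subseteq\mathbb Q\,o_{S_i}$ — a Beauville-Voisin type assertion on each $S_i$ — despite possible ``diagonal'' correlations among the factors $S_i$ of the decomposition. I expect (ii) to be the main obstacle and the heart of Theorem~\ref{prodecompws}; once it is in hand, the comparison between the Chow ring and the cohomology of $S_1\times\cdots\times S_n$ is purely formal.
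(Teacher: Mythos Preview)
Your overall skeleton is right: both directions pass through per-surface Beauville--Voisin relations on the $S_i$, and simple connectedness of $\Gamma$ is used exactly where you say, to promote the cohomological identity to $\mathrm{CH}^1(\Gamma)_{\mathbb Q}$. Two points deserve comment.

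\textbf{Step (ii), extracting $N_i\cdot N_i\subseteq\mathbb Q\,o_{S_i}$.} Your sketched method (``specialise the remaining variables to $D_0$ and use injectivity of exterior products of $0$-cycles'') does not by itself decouple the factors, because the $d_{i,D}$ are correlated across $i$ through the single variable $D$. The paper's argument is a direct push-forward and is much cleaner: from $\phi^*D^{2n}=\text{const}\cdot\psi^*\bigl(\prod_i \mathrm{pr}_i^*d_{i,D}^2\bigr)$ and $\psi_*\psi^*=\deg\psi\cdot\mathrm{id}$, applying $(\mathrm{pr}_j\circ\psi)_*$ yields a nonzero multiple $\bigl(\prod_{i\neq j}q_i(D)\bigr)\cdot d_{j,D}^2$, which on the other hand equals a multiple of $o_{S_j}:=\mathrm{pr}_{j*}\psi_*\phi^*o_{X_t}$. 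The coefficient is nonzero whenever $q(D)\neq 0$, which by the Hodge index theorem holds for every nonzero $D\in W$; polarisation then gives the bilinear version. So (ii) is not an obstacle once you push forward rather than try to unscramble tensor factors.

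\textbf{The final step.} Here your route genuinely differs from the paper's. The paper does \emph{not} use your K\"unneth/tensor-algebra argument; instead it invokes two external inputs: the Bogomolov--Verbitsky description of the cohomological relations among divisor classes (they are generated by $D^{n+1}=0$ for $q(D)=0$), together with a Mumford--Tate argument at the very general point of the family forcing $q_i=\mu_i q$ on $W$. From $q(D)=0$ one then gets $q_i(D)=0$, hence $d_{i,D}^2=0$ in $\mathrm{CH}_0(S_i)$ by the per-surface relation, and expanding $\phi^*D^{n+1}=\psi^*\bigl(\sum_i\mathrm{pr}_i^*d_{i,D}\bigr)^{n+1}$ every monomial contains some $d_{i,D}^2$, so it vanishes in Chow. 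Your K\"unneth approach is more elementary in that it avoids both Bogomolov--Verbitsky and Mumford--Tate, but it has a gap as written: for $A_i=\mathbb Q\oplus N_i\oplus\mathbb Q\,o_{S_i}\subset\mathrm{CH}^*(S_i)_{\mathbb Q}$ to inject into $H^*(S_i,\mathbb Q)$ you need $N_i\cap\mathrm{Pic}^0(S_i)_{\mathbb Q}=0$, and this is exactly the $\mathrm{Pic}^0$ issue you waved away. It can be checked: if $[\psi_{j,t}(D)]=0$ in $H^2(S_j,\mathbb Q)$ for some $j$ then $\phi^*[D]=\psi^*\bigl(\sum_{i\neq j}\mathrm{pr}_i^*[d_{i,D}]\bigr)$ is pulled back from a $(2n-2)$-dimensional product, so $[D]^{2n-1}=0$ in $H^*(X_t)$, forcing $q(D)=0$ and hence $D=0$ by Hodge index. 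With this verification your argument goes through, and it is arguably simpler than the paper's; but the paper's route has the advantage of making the role of the Beauville--Bogomolov form explicit and of isolating exactly which cohomological relations need to be lifted.
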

The importance of   Theorem \ref{theoconsdeprodecompws}  is that it reduces the weak
  splitting property to
 checking the weak version, namely in top degree.
 The  weakness of the result is that it  applies only to divisors of class perpendicular to
 $\Lambda$, which means, in practice, primitive. One has  to understand separately  what happens with
 the powers $h^k$ of the polarizing class. In all the geometric examples we have, the natural  surface decomposition
 that we exhibit
 provides (\ref{eqpourdecintro}) only on primitive cohomology.
\begin{rema}{\rm The statement above is empty for the} very general {\rm member of the family since it  has
${\rm NS}(X_t)^{\perp \Lambda}=0$. The statement above  is interesting for
 special hyper-K\"ahler manifolds with higher Picard rank $\rho\geq \rho_{gen}+2$, which are parameterized
 by
 a countable union of closed algebraic subsets in the base $B$, which is   dense
 in $B$ if ${\rm dim}\,B\geq 2$.}
\end{rema}

The second geometric notion that will play an important role in this paper is the following.

\begin{Defi} \cite{voicoiso} An algebraically coisotropic subvariety of
 a hyper-K\"ahler manifold $X$ of dimension
$2n$ is a subvariety $Z\subset X$ of codimension $k\leq n$ which admits a
rational fibration $\phi:Z\dashrightarrow W$, where $W$ is smooth and ${\rm dim}\,W=2n-2k$, such that
\begin{eqnarray}\label{defalgcoiso} \sigma_{X|Z_{\rm reg}}=\phi_{\rm reg}^*\sigma_{W},
\end{eqnarray}
where $\phi_{\rm reg}:Z_{\rm reg} \dashrightarrow W$ is the restriction of $\phi$ to the regular locus of $Z$ and $\sigma_W$ is  a holomorphic $2$-form on $W$.
\end{Defi}
This notion is to be distinguished from the notion of coisotropic subvariety, which just asks that
the restriction $\sigma_{X|Z_{\rm reg}}$ has rank $2n-2k$ at any point, or equivalently that
\begin{eqnarray}
\label{eqpourkerisot} T_{Z_{\rm reg},x}^{\perp\sigma_X}\subset   T_{Z_{\rm reg},x}
\end{eqnarray}
at any point $x$ of $Z_{\rm reg}$. Equation (\ref{eqpourkerisot}) defines then a foliation on $Z_{\rm reg}$ and $Z$ is
algebraically coisotropic when the leaves of this  foliation are algebraic.
 The two
 notions coincide in the case $n=k$ of Lagrangian varieties. For $k=1$, any divisor
  is coisotropic but smooth ample divisors are not algebraically isotropic
(see \cite{AmCam}).  It is not easy a priori to construct algebraically coisotropic divisors in a projective hyper-K\"ahler manifolds. Examples
are given by uniruled divisors but they are rigid. One open question is whether  a projective hyper-K\"ahler manifold can always be swept out
by algebraically coisotropic divisor. This is certainly true if $X$ has a surface decomposition
(see Proposition \ref{propeasysenseintro} below). In the other direction, we show   in Proposition
\ref{theodecompdiv} that the existence of  a  $1$-parameter family of algebraically coisotropic divisors for $X$
implies a decomposition, on a generically finite cover of $X$, of the holomorphic  $2$-form of $X$ as a sum
of one rank $2$ and one  rank $2n-2$ holomorphic forms.

The theory of  coisotropic subvarieties of higher codimension is  more complicated.
In the paper \cite{voicoiso}, we discussed the constraints on the cohomology classes of coisotropic subvarieties of higher codimension
and asked whether the space of coisotropic classes, namely those satisfying these constraints, are generated by classes of algebraically coisotropic subvarieties.
We also proposed the construction of algebraically coisotropic subvarieties as total spaces of  $2n-2k$-dimensional
families of  constant cycles varieties (in the sense of Huybrechts \cite{huybrechts}) of dimension $k$.

The  third   notion that will be introduced and studied in  this paper is that
   of {\it  triangle variety}:
\begin{Defi} \label{defitriangle} A triangle  variety for  $X$ (equipped with a holomorphic $2$-form
 $\sigma_X$) is a  subvariety of  $X\times X\times X$ which dominates $X$ by the three projections, maps in a generically finite way to its image in
$X\times X$ via the three projections and  is Lagrangian for the holomorphic form
$\sigma_1+\sigma_2+\sigma_3$ on $X^3$, where $\sigma_i={\rm pr}_i^*\sigma$.
\end{Defi}
The following example will be generalized in Section
\ref{secjacfib}.
\begin{example}\label{exintro} {\rm Let $S\rightarrow B$ be an elliptic surface with a section. Then the graph of minus  the relative  sum map
$S\times_BS\dashrightarrow S$, which is naturally  contained in $S^3$, is  a triangle variety for any holomorphic $2$-form
$\sigma$ on $S$.
}
\end{example}
Triangle varieties  seem to exist for  most explicitly constructed classes of projective  hyper-K\"ahler manifolds.
In fact, the simplest example
of them, namely actual triangles in the Fano variety $F_1(Y)$ of lines of a  smooth cubic fourfold $Y$, is studied with detail in
\cite{shenvial} by Shen and Vial,  who use them to study a decomposition (Beauville splitting) of the Chow groups  of $F_1(Y)$.
 The main geometric  examples, including this one,  will be presented
in Section \ref{secexamples}.

The first  link between triangle varieties, surface decompositions and algebraically coisotropic  subvarieties is the following
obvious implication:
\begin{prop}\label{propeasysenseintro} If $X$ has a surface decomposition, then it has mobile  algebraically coisotropic
subvarieties of any codimension $k\leq n$. If the surfaces appearing in a surface decomposition
of $X$ have  triangle varieties, (for example, if they are elliptic,)  then so does $X$.
\end{prop}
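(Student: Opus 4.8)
The plan is to prove the two assertions by transporting, along the generically finite maps $\phi\colon\Gamma\to X$ and $\psi\colon\Gamma\to P:=S_1\times\cdots\times S_n$, geometric structures that are cheap to produce directly on the product $P$. By the defining property of a surface decomposition applied to the symplectic form $\sigma_X$ we may write $\phi^*\sigma_X=\psi^*\omega_P$, with $\omega_P=\sum_i{\rm pr}_i^*\sigma_i$ for suitable holomorphic $2$-forms $\sigma_i$ on $S_i$. I would first record the preliminary observation that each $\sigma_i$ is non-zero: $\sigma_X$ has maximal rank $2n$ everywhere and $\phi$ is generically finite, so $\omega_P$ has rank $2n$ at a general point of $P$, where it is the block sum of the $(\sigma_i)_{p_i}$; hence each $\sigma_i$ has rank $2$ at a general point of $S_i$, and more generally every partial sum $\sum_{i\in I}{\rm pr}_i^*\sigma_i$ is non-degenerate at a general point of $\prod_{i\in I}S_i$.

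\emph{First assertion.} Fix $k\le n$. For $i\le k$ let $C_i\subset S_i$ be a general member of a family of curves covering $S_i$; set $P':=S_{k+1}\times\cdots\times S_n$, $Z_P:=C_1\times\cdots\times C_k\times P'\subset P$ (of dimension $2n-k$), let $\hat Z$ be a component of $\psi^{-1}(Z_P)$ dominating $Z_P$, and $Z:=\phi(\hat Z)\subset X$; since $\psi,\phi$ are generically finite, $\dim Z=2n-k$, so $Z$ has codimension $k$. As $\sigma_i$ restricts to $0$ on the curve $C_i$, restricting $\phi^*\sigma_X=\psi^*\omega_P$ to $\hat Z$ gives $(\phi|_{\hat Z})^*(\sigma_X|_Z)=g^*\sigma_{P'}$ with $g:={\rm pr}_{P'}\circ\psi|_{\hat Z}\colon\hat Z\to P'$ and $\sigma_{P'}:=\sum_{i>k}{\rm pr}_i^*\sigma_i$. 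As $\sigma_{P'}$ is generically non-degenerate and $\phi|_{\hat Z}$ is generically \'etale, comparing kernels shows that $\sigma_X|_Z$ has rank $2n-2k$ and that $\phi$ maps the (algebraic, $k$-dimensional) fibres of $g$ onto $k$-dimensional subvarieties of $Z$ everywhere tangent to the null foliation $\mathcal F$ of $\sigma_X|_Z$; these images are therefore the leaf-closures of $\mathcal F$, and they organise into a rational fibration $\rho\colon Z\dashrightarrow W$ with $\dim W=2n-2k$. Finally $\sigma_X|_Z$ is closed and annihilated by the $\rho$-vertical directions, hence descends to a holomorphic $2$-form $\sigma_W$ on a smooth model of $W$ with $\sigma_X|_{Z_{\rm reg}}=\rho_{\rm reg}^*\sigma_W$. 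Thus $Z$ is algebraically coisotropic of codimension $k$, and letting the $C_i$ vary in their families makes the subvarieties $Z$ so obtained sweep out $X$.

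\emph{Second assertion.} Assume each $S_i$ carries a triangle variety $T_i\subset S_i^3$ for $\sigma_i$ — for instance the one of Example \ref{exintro} when $S_i$ is elliptic, which is a triangle variety for \emph{every} holomorphic $2$-form on $S_i$. Under the canonical isomorphism $P^3\cong S_1^3\times\cdots\times S_n^3$, with $q_i\colon P^3\to S_i^3$ the projections, one computes $\sum_{j=1}^3{\rm pr}_j^*\omega_P=\sum_{i=1}^n q_i^*\big(\sigma_i^{(1)}+\sigma_i^{(2)}+\sigma_i^{(3)}\big)$, where $\sigma_i^{(j)}:={\rm pr}_j^*\sigma_i$ on $S_i^3$. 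Hence $T:=T_1\times\cdots\times T_n\subset P^3$ is a triangle variety for $(P,\omega_P)$: it has dimension $3n=\tfrac12\dim P^3$, the form $\sum_j{\rm pr}_j^*\omega_P$ vanishes on it because $(\sigma_i^{(1)}+\sigma_i^{(2)}+\sigma_i^{(3)})|_{T_i}=0$ for all $i$, and the domination and generic-finiteness-of-projection conditions hold since $T$ is a product of the $T_i$'s. Now choose a component $\hat T$ of $(\psi\times\psi\times\psi)^{-1}(T)$ dominating $T$ (so generically finite over $T$, of dimension $3n$), and set $T_X:=(\phi\times\phi\times\phi)(\hat T)\subset X^3$. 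Using $\phi^*\sigma_X=\psi^*\omega_P$ on each factor, $(\phi\times\phi\times\phi)^*\big(\sum_j{\rm pr}_j^*\sigma_X\big)=(\psi\times\psi\times\psi)^*\big(\sum_j{\rm pr}_j^*\omega_P\big)$, which vanishes on $\hat T$ because $\hat T$ maps into $T$; since $\phi\times\phi\times\phi$ is generically finite, $\dim T_X=3n$ and $\big(\sum_j{\rm pr}_j^*\sigma_X\big)|_{T_X}=0$, so $T_X$ is Lagrangian for $\sigma_X^{(1)}+\sigma_X^{(2)}+\sigma_X^{(3)}$. Finally, that $T_X$ dominates $X$ by the three projections and maps generically finitely to its image under each of the three projections $X^3\to X^2$ is deduced from the corresponding properties of $T$ over $P$ by following the generically finite maps $\psi,\phi$ and their products (dimensions of images of generically finite maps being preserved). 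Hence $X$ has a triangle variety.

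\emph{Expected main difficulty.} Apart from one step the argument is a sequence of diagram chases. The real point is the descent in the first assertion: because $\phi|_{\hat Z}$ need not be birational onto $Z$, one must verify that the $\phi$-images of the fibres of $g$ are genuinely $k$-dimensional and everywhere tangent to the null foliation of $\sigma_X|_Z$, so that they coincide with its leaf-closures and hence define an honest rational fibration on $Z$ itself rather than merely on a cover; the descent of the $2$-form to the base is then the standard fact that a closed $2$-form killed by the relative tangent sheaf of a fibration is a pull-back from the base. A minor, purely bookkeeping nuisance throughout is the need to pass to appropriate irreducible components of the relevant preimages so that dominance and generic finiteness are really inherited from the surfaces $S_i$.
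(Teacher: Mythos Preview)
Your proof is correct and follows exactly the same construction as the paper: pull back $C_1\times\cdots\times C_k\times S_{k+1}\times\cdots\times S_n$ (resp.\ the product $T_1\times\cdots\times T_n$ of the triangle varieties) along $\psi$ and push forward along $\phi$. The paper's proof is a two-line sketch that simply names these two constructions without any verification, so your version is considerably more detailed; in particular, the descent issue you single out as the main difficulty---that the null foliation on $Z$ is algebraically integrable on $Z$ itself and not merely on the cover $\hat Z$---is not discussed in the paper at all.
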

\begin{proof} Indeed let $\phi:\Gamma\rightarrow X,\,\psi:\Gamma \rightarrow S_1\times\ldots\times S_n$ be surjective  generically finite  maps such that
$$\phi^*\sigma_X=\psi^*({\rm pr}_1^*\sigma_{S_1}+\ldots+{\rm pr}_n^*\sigma_{S_n})\,\,{\rm in}\,\,H^{2,0}(\Gamma).$$
For any integer $k\leq n$, let $C_i\subset S_i,\,i=1,\ldots,\,k$, be very ample curves in general position. Then
$\phi(\psi^{-1}(C_1\times\ldots C_k\times S_{k+1}\times\ldots\times S_n))$ is an algebraically coisotropic
subvariety of $X$ of codimension $k$. It $T_i\subset  S_i^3$ are triangle varieties,
then
$\phi^3(T_1\times\ldots\times T_n)$ is a triangle variety for $X$.
\end{proof}
 We will show in the paper   how conversely triangle subvarieties for $X$ and algebraically coisotropic
 subvarieties of $X$  of codimension $n-1$, where ${\rm dim}\,=2n$,  can be used
to construct surface decompositions and algebraically coisotropic varieties  of  $X$ of any codimension $1\leq k\leq n$.
We prove the following result (see Theorem \ref{proexistesurfdec}).
\begin{theo}\label{proexistesurfdecintro} Let $X$ be a  projective hyper-K\"ahler variety of dimension $2n$. Assume $X$ has a triangle variety
$T\subset X^3$ and  algebraically coisotropic subvarieties $\tau:Z\dashrightarrow \Sigma$ of dimension $n+1$. Let $F\subset X$ be the general fiber of $\tau$.
 Then if the intersection number
$F^{n}\cdot p_{1\ldots n}(T_{n+1})$ of cycles in $X^n$ is nonzero,
 $X$ admits a surface decomposition.
\end{theo}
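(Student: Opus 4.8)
The plan is to take the $n$ surfaces of the decomposition to be the bases $\Sigma_1,\dots,\Sigma_n$ of $n$ general members $\tau_i:Z_i\dashrightarrow\Sigma_i$ of the (mobile) family of algebraically coisotropic subvarieties of dimension $n+1$, all deformation equivalent, and to build $\Gamma$ as an incidence variety inside $X^{n+1}$. Concretely, I would set $\Gamma$ equal to the closure in $X^{n+1}$ of
$$\{(x_1,\dots,x_{n+1})\in T_{n+1}\ :\ x_i\in Z_i\ \text{for}\ i=1,\dots,n\},$$
with $\phi={\rm pr}_{n+1}|_\Gamma:\Gamma\to X$ and $\psi=(\tau_1\circ {\rm pr}_1,\dots,\tau_n\circ {\rm pr}_n):\Gamma\dashrightarrow\Sigma_1\times\dots\times\Sigma_n$; after resolving $\Gamma$ and passing to a suitable irreducible component this should provide the data of a surface decomposition.

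First I would use the basic properties of the iterated triangle variety $T_{n+1}\subset X^{n+1}$, obtained from $T$ by repeated composition of correspondences: being a composition of Lagrangian correspondences, it is Lagrangian for $\sigma_1+\dots+\sigma_{n+1}$ on $X^{n+1}$ (with $\sigma_i={\rm pr}_i^*\sigma_X$), hence of dimension $n(n+1)$; it dominates $X$ by each projection; and $p_{1\dots n}:T_{n+1}\to X^n$ is generically finite onto its image $\Theta:=p_{1\dots n}(T_{n+1})$, which therefore has dimension $n(n+1)$. The relevant dimension count for $\Gamma$ is
$$2n(n+1)-n(n+1)-n(n-1)=2n,$$
coming from the codimension $n(n+1)$ of $T_{n+1}$ and the codimension $n(n-1)$ of the $n$ conditions $x_i\in Z_i$ in $X^{n+1}$; the delicate point, discussed below, is to arrange that this intersection is proper, so that $\dim\Gamma=2n=\dim X=\dim(\Sigma_1\times\dots\times\Sigma_n)$. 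Granting this, the required form identity is immediate: on $\Gamma\subset T_{n+1}$ one has $\sum_{i=1}^{n+1}\sigma_i|_\Gamma=0$, so $\phi^*\sigma_X=\sigma_{n+1}|_\Gamma=-\sum_{i=1}^{n}\sigma_i|_\Gamma$; and since ${\rm pr}_i(\Gamma)\subset Z_i$ with $\sigma_{X|Z_i}=\tau_i^*\sigma_{\Sigma_i}$ (algebraically coisotropic property), $\sigma_i|_\Gamma=(\tau_i\circ {\rm pr}_i)^*\sigma_{\Sigma_i}$, whence $\phi^*\sigma_X=\psi^*\big(\sum_i{\rm pr}_i^*(-\sigma_{\Sigma_i})\big)$. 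As $h^{2,0}(X)=1$ this covers all holomorphic $2$-forms on $X$, and the identity, valid on a dense open set, extends to a resolution of $\Gamma$.

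It then remains to check that $\phi$ and $\psi$ are generically finite and surjective; as source and both targets have dimension $2n$, in each case it is enough to prove dominance. For $\psi$: up to the finite fibres of $p_{1\dots n}$, the fibre of $\psi$ over a general $(s_1,\dots,s_n)$ is the intersection $(\tau_1^{-1}(s_1)\times\dots\times\tau_n^{-1}(s_n))\cap\Theta$ in $X^n$; for general $s_i$ the factors are general fibres of the $\tau_i$, hence homologous to $F$, so this is a representative, of complementary dimension to $\Theta$, of the intersection computing $F^n\cdot p_{1\dots n}(T_{n+1})$. As this intersection number is nonzero by hypothesis and two disjoint subvarieties of complementary dimension meet in intersection number $0$, this fibre is nonempty, so $\psi$ is dominant. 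For $\phi$: each $Z_i$ being coisotropic of codimension $n-1$, the restriction $\sigma_{X|Z_i}$ has rank $2$ at every point, so $\sigma_{\Sigma_i}\neq 0$ and hence $\sum_i{\rm pr}_i^*\sigma_{\Sigma_i}$ has maximal rank $2n$ at a general point of $\Sigma_1\times\dots\times\Sigma_n$; since $\psi$ is dominant and generically finite, $\psi^*\big(\sum_i{\rm pr}_i^*\sigma_{\Sigma_i}\big)=-\phi^*\sigma_X$ still has rank $2n$ at a general point of $\Gamma$, which forces ${\rm rk}\,d\phi\geq 2n=\dim X$ there, so $\phi$ is dominant. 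Replacing $\Gamma$ by a top-dimensional component dominating $\Sigma_1\times\dots\times\Sigma_n$ (on which $\phi$ is still dominant, by the same rank argument) and resolving, one obtains a surface decomposition of $X$ with surfaces $\Sigma_1,\dots,\Sigma_n$.

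The step I expect to be the main obstacle is the general-position argument ensuring $\dim\Gamma=2n$, i.e.\ that the conditions $x_i\in Z_i$ meet $T_{n+1}$ properly in $X^{n+1}$; here one must exploit the mobility of the family of algebraically coisotropic subvarieties through a Bertini/Kleiman-type argument. Everything else is essentially formal once the iterated triangle variety $T_{n+1}$ and the generic finiteness of $p_{1\dots n}$ are in place; if these are not already available from the general theory of triangle varieties, setting them up (in particular tracking the Lagrangian property and the generic finiteness through the iteration) is the other point requiring care.
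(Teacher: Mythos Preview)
Your construction is essentially the paper's: there one takes $\Gamma_0 = {\rm pr}_{1\ldots n}^{-1}(Z^n) \cap T_{n+1}$ with $\phi = {\rm pr}_{n+1}$ and $\psi = \tau^n \circ {\rm pr}_{1\ldots n}$, derives the form identity exactly as you do, proves dominance of $\psi$ from the intersection-number hypothesis, and then dominance of $\phi$ from the rank-$2n$ argument.

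The one substantive difference is how the dimension of $\Gamma$ is handled, and here your plan creates an unnecessary difficulty. You want $\dim\Gamma=2n$ via a Bertini/Kleiman argument using $n$ independently moving members $Z_1,\ldots,Z_n$; but mobility of $Z$ is \emph{not} part of the hypothesis (only a single $\tau:Z\dashrightarrow\Sigma$ is given), so this step would leave a genuine gap. The paper sidesteps the issue entirely: it does not require $\dim\Gamma_0=2n$. Once some irreducible component $\Gamma_0'$ of $\Gamma_0$ dominates $\Sigma^n$ (which the nonvanishing of $F^n\cdot p_{1\ldots n}(T_{n+1})$ guarantees), the rank argument forces $\Gamma_0'$ to dominate $X$ as well; one then simply cuts $\Gamma_0'$ by general hyperplane sections, reapplying the same dominance arguments, until both $\phi$ and $\psi$ become generically finite. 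The form identity survives restriction. So your ``main obstacle'' dissolves, and you should use the single $Z$ (and $\Sigma^n$) rather than a moving family.

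One small correction: the iterated variety $T_{n+1}$ is Lagrangian for $\sum_i \epsilon_i\,{\rm pr}_i^*\sigma_X$ with signs $\epsilon_i=\pm 1$ arising from the subtraction in each composition step, not for the straight sum. This does not affect your argument, as the signs are absorbed into the $\sigma_{\Sigma_i}$.
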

  Finally we will
prove (see Theorem \ref{theodec}), as an application of  Theorem
\ref{proexistesurfdecintro} or  variants of it, that most hyper-K\"ahler manifolds
that have been explicitly
constructed from algebraic geometry admit surface decompositions.
\section{The decomposition problem for hyper-K\"ahler varieties\label{secdecomp}}
Let $X$ be a smooth projective manifold  of dimension $2n$ (we will  later on  focus  on  the hyper-K\"ahler case). We wish to study the existence and consequences of a surface decomposition
of the form described in the introduction (see Definition \ref{defiintrodecomp}),
namely the existence of  smooth projective surfaces $S_i,\,i=1,\ldots , \,n$, and
an effective correspondence $\Gamma$ (which can be assumed to be smooth and projective)
$$\phi: \Gamma {\rightarrow} X,\,\,\,\,\,\psi:
\Gamma {\rightarrow} S_1\times\ldots\times S_n,$$
with $\phi$ and $\psi$ dominant  generically finite, such that for any
$\sigma_X\in H^{2,0}(X)$,
\begin{eqnarray} \label{eqdecompholo} \phi^*\sigma_X=\psi^*(\sum_{i=1}^n{\rm pr}_i^*\sigma_{S_i})\,\,
{\rm in}\,\,H^{2,0}(\Gamma),
\end{eqnarray}
for some $\sigma_{S_i}\in H^{2,0}(S_i)$, where the
 ${\rm pr}_i:S_1\times\ldots\times  S_n\rightarrow S_i$ are  the various projections.
 This implies in fact that we have more generally, for any
  $\eta\in H^2(X,\mathbb{Q})_{tr}$,  a relation
 \begin{eqnarray} \label{eqdecomp} \phi^*\eta=\psi^*({\rm pr}_1^*\eta_1+\ldots + {\rm pr}_n^*\eta_n)\end{eqnarray}
 for some $\eta_i\in H^2(S_i,\mathbb{Q})_{tr}$.
 Indeed, the form  $\sigma_{S_i}\in H^{2,0}(S_i)$ in (\ref{eqdecompholo}) can be reconstructed
from $\sigma_X$ by the action of the  morphism
 of Hodge structures
$$\psi_i: H^2(X,\mathbb{Q})_{tr}\rightarrow H^2(S_i,\mathbb{Q}),\,\,\,\eta\mapsto \eta_i$$
\begin{eqnarray}\label{eqpourpsii}\psi_i(\eta)=\frac{1}{NN_i}{\rm pr}_{i*}( d^{2n-2}\cup \psi_*(\phi^*\eta))  ,
\end{eqnarray}
where $N$ is the degree of $\psi$ and $d=\sum_i{\rm pr}_i^*d_i$ is the first Chern class of an ample divisor on $\prod_iS_i$ with the property that
${\rm pr}_{i*}(d^{2n-2})=N_i 1_{S_i}$ in $H^0(S_i,\mathbb{Q})$ for all $i$.
 The last condition indeed guarantees
that
$$\frac{1}{NN_i}{\rm pr}_{i*}(d^{2n-2}\cdot \psi_*\psi^*(\sum_j{\rm pr}_j^*\eta_j))=\eta_i$$
for any cohomology classes   $\eta_j$ on $S_j$ such that $\eta_i\cup d_i=0$.
The morphisms of Hodge structures
$\psi_i$ being defined as in (\ref{eqpourpsii}), condition (\ref{eqdecompholo}) then rewrites as
$$\phi^*\sigma_X=\psi^*(\sum_i{\rm pr}_i^*(\psi_i(\sigma_X)))\,\,{\rm in}\,\,H^{2,0}(\Gamma).$$
This equality  defines a Hodge substructure  of $H^2(X,\mathbb{Q})$. Hence if it is satisfied
 on $H^{2,0}(X)$, it is satisfied on
$H^2(X,\mathbb{Q})_{tr}$.

 A variant of this definition assumes that $S_1=\ldots=S_n$ and the correspondence
 $\Gamma$ is symmetric
 with respect
  to the symmetric group action on $S^n$, but this is not essential.
  Another variant  asks that condition (\ref{eqdecomp}) is satisfied
  for any $\eta\in H^2(X,\mathbb{Q})_{prim}$,
where the subscript ``prim'' refers to the choice of an ample line bundle $L$ on $X$, and
 primitive cohomology is primitive with respect to
$l=c_1(L)$. If we work with very general hyper-K\"ahler manifolds of Picard number $1$,
the two notions coincide.
In the hyper-K\"ahler case, equation (\ref{eqdecompholo}) decomposes the smooth projective manifold
 $X$ in the sense that the rank $2n$ holomorphic
$2$-form on $X$ gets decomposed as the sum of $n$ (generically) rank $2$ holomorphic $2$-forms
$\psi^*({\rm pr}_i^*\sigma_{S_i})$ on the generically finite cover $\Gamma$.

If we now relax the conditions on $\phi,\,\psi$  in Definition \ref{defiintrodecomp}  and just ask that
$\phi$ is surjective and
$ \phi^*\sigma_X=\psi^*(\sum_{i=1}^N{\rm pr}_i^*\sigma_{S_i})$ for any holomorphic $2$-form on $X$, allowing an arbitrarily
 large number of summands, then
a decomposition as in (\ref{eqdecompholo}) should always exist, for any smooth projective variety $X$. More precisely:
\begin{prop}\label{prolefschetz} Let $X$ be a smooth projective variety.
Assume $X$ satisfies Lefschetz
standard  conjecture for degree $2$ cohomology. Then there is a generically finite cover
$\phi:\Gamma\rightarrow X$, surfaces $S_1,\dots,\,S_N$,  and a morphism
$\psi:\Gamma\rightarrow S_1\times\ldots \times S_N$, such that any $(2,0)$-form $\sigma$ on $X$ satisfies
\begin{eqnarray}\label{eqdeprop}\phi^*\sigma=\psi^*(\sum_i{\rm pr}_i^*\sigma_i)\,\,{\rm in}\,\,H^{2,0}(\Gamma)
\end{eqnarray}
for some $(2,0)$-forms $\sigma_i$ on $S_i$.
\end{prop}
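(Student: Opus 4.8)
\medskip\noindent\emph{Proof proposal.} The plan is to combine two ingredients: a smooth surface $S\subset X$ onto which $H^{2}$ restricts injectively, and the algebraic cycle furnished by the Lefschetz standard conjecture, which inverts the Hard Lefschetz isomorphism on $H^{2}$. Together they produce an algebraic correspondence from $S$ to $X$ that is a section, on $H^{2,0}$, of the restriction map; the remaining work converts this cohomological statement into the geometric one asked for, which is where one must deal with the fact that correspondences act on forms by push-forward rather than pull-back.

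Concretely, set $m=\dim X$ and let $j:S\hookrightarrow X$ be a smooth complete intersection of $m-2$ general members of a very ample linear system. By the weak Lefschetz theorem $j^{*}:H^{2}(X,\mathbb{Q})\to H^{2}(S,\mathbb{Q})$ is injective, hence so is $j^{*}:H^{2,0}(X)\to H^{2,0}(S)$, $\sigma\mapsto\sigma_{|S}$. Since $j_{*}j^{*}$ is cup-product with $[S]=h^{m-2}$, that is, the Hard Lefschetz isomorphism $L^{m-2}:H^{2}(X)\to H^{2m-2}(X)$, the Lefschetz standard conjecture in degree $2$ gives an algebraic cycle $\gamma\in\mathrm{CH}^{2}(X\times X)_{\mathbb{Q}}$ with $\gamma_{*}=(L^{m-2})^{-1}$ on $H^{2m-2}(X)$. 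The composed cycle $Z:=\gamma\circ\Gamma_{j}\in\mathrm{CH}^{2}(S\times X)_{\mathbb{Q}}$, with $\Gamma_{j}\subset S\times X$ the graph of $j$, then has pure dimension $m$ and satisfies $Z_{*}(j^{*}\sigma)=\gamma_{*}j_{*}(j^{*}\sigma)=\sigma$ for every $\sigma\in H^{2,0}(X)$.

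I would then geometrize $Z$. Writing $Z=\sum_{k}n_{k}[Z_{k}]$ with $Z_{k}\subset S\times X$ irreducible of dimension $m$, one first checks that a component with $\mathrm{pr}_{X}(Z_{k})\subsetneq X$ does not contribute: after resolving $Z_{k}$ and factoring its projection to $X$ through $\mathrm{pr}_{X}(Z_{k})$, the push-forward of $j^{*}\sigma$ would be a class in $H^{2,0}(X)$ supported on a subvariety of dimension $<m$, hence zero on Hodge-type grounds. Discarding those components we may assume every $Z_{k}$ dominates $X$, and after fixing resolutions $\widetilde{Z}_{k}$, with induced generically finite surjective $b_{k}:\widetilde{Z}_{k}\to X$ and morphisms $a_{k}:\widetilde{Z}_{k}\to S$, we obtain $\sigma=\sum_{k}n_{k}\,b_{k*}\!\bigl(a_{k}^{*}(j^{*}\sigma)\bigr)$ in $H^{2,0}(X)$. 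To turn these push-forwards into pull-backs, let $\phi:\Gamma\to X$ be a $G$-equivariant resolution of the normalization of $X$ in the Galois closure $L$ of the compositum of the extensions $\mathbb{C}(\widetilde{Z}_{k})/\mathbb{C}(X)$, where $G=\mathrm{Gal}(L/\mathbb{C}(X))$; each $\widetilde{Z}_{k}$ is an intermediate cover, so $\phi=b_{k}\circ r_{k}$ with $r_{k}:\Gamma\to\widetilde{Z}_{k}$ of degree $|H_{k}|$ for the corresponding subgroup $H_{k}\leq G$. From $r_{k*}r_{k}^{*}=|H_{k}|\,\mathrm{id}$ and the standard identity $\phi^{*}\phi_{*}=\sum_{g\in G}g^{*}$ one gets $\phi^{*}(b_{k*}\mu)=\sum_{\bar g\in H_{k}\backslash G}(r_{k}\circ g)^{*}\mu$, and applying this to $\mu=a_{k}^{*}(j^{*}\sigma)$ yields $\phi^{*}\sigma=\sum_{k}n_{k}\sum_{\bar g}(a_{k}\circ r_{k}\circ g)^{*}(j^{*}\sigma)$: a $\mathbb{Z}$-linear combination of pull-backs of the single holomorphic $2$-form $j^{*}\sigma$ on $S$, along finitely many morphisms $\Gamma\to S$ independent of $\sigma$.

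Finally one packages this: let $N$ be the number of pairs $(k,\bar g)$ counted with multiplicity $|n_{k}|$, set $S_{i}:=S$ for all $i$, let $\psi:\Gamma\to S^{N}$ be the product of the morphisms $a_{k}r_{k}g$, and put $\sigma_{i}:=\pm\,j^{*}\sigma$ with the sign of the relevant $n_{k}$; then $\phi^{*}\sigma=\psi^{*}(\sum_{i}\mathrm{pr}_{i}^{*}\sigma_{i})$. I expect the main obstacle to be exactly this last passage, from the purely cohomological relation $\sigma=Z_{*}(j^{*}\sigma)$ --- which uses only a push-forward along a correspondence --- to a genuine equality of pulled-back forms: what makes it work is the Hodge-type vanishing that lets one keep only the components dominating $X$, together with the elementary Galois-cover identity trading a push-forward for a sum of pull-backs. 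The sign ambiguity in the $n_{k}$ is immaterial, since $-j^{*}\sigma$ is again a holomorphic $2$-form on $S$; this is precisely why the pull-back version follows from the standard conjecture just as the weaker push-forward version does.
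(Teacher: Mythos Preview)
Your proof is correct and reaches the same conclusion, but it takes a genuinely different route from the paper at the key step of converting the cohomological identity $\sigma=Z_{*}(j^{*}\sigma)$ into an equality of pulled-back forms.

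The paper replaces the signed cycle $Z_S$ by an \emph{effective} one using a residual trick (adding $H^{2}-Z_{S,i}$ for each negative component, which does not change the action on transcendental cohomology). An effective $m$-cycle on $X\times S$ is then interpreted as a rational map $\phi_{Z_S}:X\dashrightarrow S^{(N)}$ to the symmetric product, and $\Gamma$ is obtained as the fibre product $X\times_{S^{(N)}}S^{N}$. The identity $\mu^{*}\sigma_{S^{(N)}}=\sum_i{\rm pr}_i^{*}\sigma_S$ on $S^{N}$ then gives the decomposition directly, and one gets for free that $\Gamma$ is symmetric under $\mathfrak{S}_N$ with all $S_i=S$ and all $\sigma_i=\sigma_S$ (this is the content of the Remark following the proposition).

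You instead keep the signed decomposition $Z=\sum n_k[Z_k]$, discard the non-dominant components by a Hodge-type argument, and trade push-forward for pull-back via the Galois-cover identity $\phi^{*}\phi_{*}=\sum_{g\in G}g^{*}$ on holomorphic forms. This is equally valid; the discarding step is justified exactly as you say (the contribution factors through $H^{2-2(m-d)}$ of a resolution of the image, which is either zero or of type $(0,0)$), and the trace formula for holomorphic forms holds because such forms are determined on the \'etale locus. The price is that your $\Gamma$ is larger and not manifestly symmetric, and you carry the signs $\pm 1$ into the forms $\sigma_i$ rather than eliminating them at the cycle level. One small cosmetic point: the maps $r_k:\Gamma\to\widetilde{Z}_k$ are a priori only rational, so to get an honest morphism $\psi:\Gamma\to S^{N}$ you should blow up $\Gamma$ further; this is harmless for holomorphic forms and is the same sloppiness the paper itself acknowledges in its remark about desingularization.
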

\begin{rema}{\rm The proof will even show that we can take $S_1=\ldots=S_N$ and
$\Gamma$ symmetric.}
\end{rema}
\begin{proof}[Proof of Proposition \ref{prolefschetz}] The Lefschetz standard
conjecture for degree $2$ cohomology on $X$ provides a codimension
$2$-cycle $Z$  on $X\times X$  such that, if  $n={\rm dim}\,X$,
$Z^*:H^{2n-2}(X,\mathbb{Q})\rightarrow H^2(X,\mathbb{Q})$
is the inverse of  the Lefschetz isomorphism  $l^{n-2}:H^2(X,\mathbb{Q})\cong H^{2n-2}(X,\mathbb{Q})$
induced by the first Chern class $l$  of
a very ample line bundle $L$ on $X$.
Let $j:S\rightarrow X$ be a smooth  surface  which is the complete intersection of $n-2$
ample  hypersurfaces in $|L|$. Then by the Lefschetz theorem on hyperplane section, the Gysin morphism
$ j_*: H^2(S,\mathbb{Q})\rightarrow  H^{2n-2}(X,\mathbb{Q})$ is surjective,
so that,
denoting by  $Z_S$ the restriction of $Z$ to
$X\times S$, we find that
$$Z_S^*=Z^*\circ j_*: H^2(S,\mathbb{Q})\rightarrow H^2(X,\mathbb{Q})$$
is also surjective.
We can make  $Z_S$ is effective by replacing if necessary its negative components
 $-Z_{S,i}$ by effective  residual
cycles $Z'_i$ of class $H^{2}-Z_{S,i}$, where $H={\rm pr}_1^*H_1+{\rm pr}_2^*H_2$ is a sufficiently ample line
 bundle on $X\times S$. The cycle $H^2$ acts trivially on transcendental
  cohomology, so this change does not affect
 $Z_S^*:H^2(S,\mathbb{Q})_{tr}\rightarrow H^2(X,\mathbb{Q})_{tr}$.
Because $Z_S$ is effective, it is given by a rational map
$$\phi_{Z_S}:X\dashrightarrow S^{(N)},$$
so that
$$Z_S^*\sigma_S=\phi_{Z_S}^*\sigma_{S^{(N)}} \,\,{\rm in}\,\,H^{2,0}(X),$$
for any holomorphic $2$-form $\sigma_S$ on $S$,
where $\sigma_{S^{(N)}}$ denotes the induced $2$-form on
$S^{(N)}$.  Recall that, denoting by
$\mu:S^N\rightarrow S^{(N)}$ the quotient map,
 \begin{eqnarray}\label{eqnpourformetireeprod} \mu^*\sigma_{S^{(N)}}=\sum_{i=1}^N{\rm pr}_i^*\sigma_S \,\,{\rm in}\,\,H^{2,0}(S^N).
 \end{eqnarray}
The finite cover $\mu$  induces a finite cover $\phi:\Gamma:=X\times_{S^{(N)}} S^N\rightarrow X,\,\psi:\Gamma
\rightarrow S^{N}$ and we have a commutative diagram
\begin{eqnarray}\label{numerodiag111}
  \xymatrix{
&\Gamma\ar[r]^{\psi}\ar[d]^{\phi}&S^N\ar[d]^{\mu}\\
&X\ar[r]^{\phi_{Z_S}}&S^{(N)}}
\end{eqnarray}
From the commutativity of (\ref{numerodiag111}), we deduce the equality of holomorphic $2$- forms on $\Gamma$
\begin{eqnarray}\label{eqpour25oct}  \phi^*(\phi_{Z_S}^*\sigma_{S^{(N)}})=\psi^*(\mu^*\sigma_{S^{(N)}}).
\end{eqnarray}
Combining (\ref{eqnpourformetireeprod}) and
(\ref{eqpour25oct}), we get (\ref{eqdeprop}) with $\sigma_X=Z_S^*\sigma_S,\,\sigma_i=\sigma_S$ for all $i$.
\end{proof}

\begin{rema}{\rm To be fully rigorous in the above proof, we should introduce desingularizations of
$S^{(n)}$ and $\Gamma$ to write the equalities above. This is done  in \cite{mumford}.
}
\end{rema}

\subsection{Surface decomposition and cohomology ring}
It is a well-known and fundamental result (see \cite{bogover}, \cite{fujiki}) that
for a hyper-K\"ahler manifold $X$ of dimension $2n$, there exist  a quadratic
form $q$ on $H^2(X,\mathbb{Q})$ and  a positive  rational number $\lambda$ such that
for any $\eta\in H^2(X,\mathbb{Q})$
\begin{eqnarray}\label{eqintersection} \int_X\eta^{2n}=\lambda q(\eta)^n.
\end{eqnarray}
Let us show how this property  follows, at least on transcendental
cohomology,  from the existence of a surface decomposition for
a smooth  projective  variety  $X$, assuming it satisfies the following property (*). Recall  first  that
a quadratic form $q$ on a rational weight $2$   Hodge structure

$$H,\,H_\mathbb{C}=H^{2,0}\oplus H^{1,1}\oplus H^{0,2},\,\,\overline{H^{p,q}}=H^{q,p}$$
is said to satisfy the first Hodge-Riemann relations
if the Hodge decomposition is orthogonal
for the Hermitian pairing $h(\alpha,\beta)=q(\alpha,\overline{\beta})$ on
$H_\mathbb{C}$
or, equivalently, $q(H^{2,0},H^{2,0}\oplus H^{1,1})=0$. We will say that it satisfies the weak second Hodge-Riemann relations
if $q(\alpha,\overline{\alpha})\geq 0$ for $\alpha\in H^{2,0}$ and
 $q(\alpha,\overline{\alpha})\leq 0$ for $\alpha\in H^{1,1}$.
Consider the condition

\vspace{0.5cm}

(*)
{\it There exists up to a coefficient a unique quadratic form $q$
   satisfying  the first  Hodge-Riemann relations on $H^2(X,\mathbb{Q})_{tr}$.}

\vspace{0.5cm}

 Property (*) is well-known to be satisfied by a very general lattice polarized
 projective hyper-K\"ahler manifold. Note that we need in any case to use
  transcendental cohomology, namely
$H^2(X,\mathbb{Q})^{\perp NS(X)}$, instead of  primitive cohomology, as  (*)
is never satisfied on $H^2(X,\mathbb{Q})_{prim}$ if it is different from $H^2(X,\mathbb{Q})_{tr}$,
 that is, if it  contains
rational classes of type $(1,1)$.
We have the following.
\begin{prop}\label{propquadricfujiki}  (i) If a smooth projective variety  $X$ of
 dimension $2n$ admits  a surface decomposition as in Definition \ref{defiintrodecomp},
 there exist quadratic forms $q_1,\ldots,\,q_n$  satisfying the
 first and weak second Hodge-Riemann relations  on $H^2(X,\mathbb{Q})_{tr}$,
 such that,
 for any $\eta\in H^2(X,\mathbb{Q})_{tr}$
 \begin{eqnarray}\label{eqintersectionpourXgen} \int_X\eta^{2n}= q_1(\eta)\ldots q_n(\eta).
\end{eqnarray}

(ii) If furthermore $X$ satisfies property (*),
 there exists  a rational number $\lambda$ and a quadratic form $q$ satisfying the
 first and weak second  Hodge-Riemann relations  on $H^2(X,\mathbb{Q})_{tr}$, such that,
for any $\eta\in H^2(X,\mathbb{Q})_{tr}$,

\begin{eqnarray}\label{eqintersection2} \int_X\eta^{2n}=\lambda q(\eta)^n.
\end{eqnarray}
\end{prop}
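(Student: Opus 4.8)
The plan is to start from the surface decomposition $\phi:\Gamma\to X$, $\psi:\Gamma\to S_1\times\cdots\times S_n$ and to compute $\int_X\eta^{2n}$ by pulling back to $\Gamma$. For $\eta\in H^2(X,\mathbb{Q})_{tr}$ we have $\phi^*\eta=\psi^*\bigl(\sum_i\mathrm{pr}_i^*\eta_i\bigr)$ with $\eta_i=\psi_i(\eta)\in H^2(S_i,\mathbb{Q})_{tr}$, where $\psi_i$ is the morphism of Hodge structures defined by (\ref{eqpourpsii}). Writing $N=\deg\phi=\deg\psi$ (these agree since the two maps fit in the fibre-product diagram of the proof of Proposition \ref{prolefschetz}, and in any case one can arrange this), the projection formula gives
\[
N\int_X\eta^{2n}=\int_\Gamma(\phi^*\eta)^{2n}=\int_{S_1\times\cdots\times S_n}\Bigl(\sum_{i=1}^n\mathrm{pr}_i^*\eta_i\Bigr)^{2n}.
\]
Now expand the right-hand side by the multinomial theorem. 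A monomial $\prod_i\mathrm{pr}_i^*\eta_i^{a_i}$ with $\sum a_i=2n$ integrates to $\prod_i\int_{S_i}\eta_i^{a_i}$, and this vanishes unless every $a_i\le 2=\dim_{\mathbb{R}}S_i$ (real dimension $4$, so $H^k(S_i)=0$ for $k>4$, i.e.\ we need $2a_i\le 4$). Since the $a_i$ are nonnegative integers summing to $2n$, the only surviving term is $a_1=\cdots=a_n=2$, with multinomial coefficient $\binom{2n}{2,\ldots,2}=(2n)!/2^n$. Hence
\[
N\int_X\eta^{2n}=\frac{(2n)!}{2^n}\prod_{i=1}^n\int_{S_i}\eta_i^2.
\]
Setting $q_i(\eta):=\bigl(\tfrac{(2n)!}{2^n N}\bigr)^{1/n}\int_{S_i}\psi_i(\eta)^2$ (absorbing the constant symmetrically, or more cleanly defining $q_i(\eta)=c_i\int_{S_i}\psi_i(\eta)^2$ with $\prod c_i=(2n)!/(2^nN)$) yields the identity (\ref{eqintersectionpourXgen}). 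Each $q_i$ is manifestly a quadratic form on $H^2(X,\mathbb{Q})_{tr}$, being the pullback by the morphism of Hodge structures $\psi_i$ of the intersection form on the surface $S_i$; the first and weak second Hodge–Riemann relations for $q_i$ follow from the corresponding (classical) Hodge index statements for $H^2$ of the surface $S_i$, pulled back along $\psi_i$ — here one uses that $\psi_i$ is a morphism of Hodge structures, so it sends $H^{2,0}(X)$ into $H^{2,0}(S_i)$ and $H^{1,1}(X)_{tr}$ into $H^{1,1}(S_i)$, and that on a surface the intersection form is positive definite on $H^{2,0}\oplus\overline{H^{2,0}}$ (real part) and its restriction to $H^{1,1}$ has signature with the transcendental part negative, giving exactly $q_i(\alpha,\bar\alpha)\ge0$ on $H^{2,0}$ and $\le0$ on $H^{1,1}$.

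For part (ii), invoke property (*): each $q_i$ satisfies the first Hodge–Riemann relations on $H^2(X,\mathbb{Q})_{tr}$, so by uniqueness there are scalars $\mu_i$ with $q_i=\mu_i\,q$ for a single form $q$ (which also satisfies the weak second Hodge–Riemann relations, inheriting the sign conditions after adjusting $q$ by an overall sign so that $\mu_i>0$; note the $\mu_i$ are forced to be real and of the same sign because each $q_i$ has the same signature pattern). Substituting into (\ref{eqintersectionpourXgen}) gives $\int_X\eta^{2n}=(\prod_i\mu_i)\,q(\eta)^n$, which is (\ref{eqintersection2}) with $\lambda=\prod_i\mu_i\in\mathbb{Q}$ (rationality of $\lambda$ follows since $\int_X\eta^{2n}$ and $q(\eta)^n$ are both rational-valued on the rational lattice, once $q$ is normalised rationally).

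The main obstacle is part (i), specifically verifying the Hodge–Riemann relations for the $q_i$ rather than the combinatorial identity, which is routine. The subtlety is that $\psi_i$ need not be injective, so $q_i$ may be degenerate on $H^2(X,\mathbb{Q})_{tr}$; one must check that this does not spoil the weak (non-strict) Hodge–Riemann inequalities — it does not, since degeneracy only adds a kernel and the inequalities are $\ge$ and $\le$, not strict. A second minor point is the bookkeeping of degrees and the effective/desingularization issues already flagged in the remark after Proposition \ref{prolefschetz}, which one handles exactly as there (replacing $\Gamma$, $S^{(N)}$ by smooth models and using that the relevant classes are transcendental so that the correction cycles act trivially). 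For (ii), the only thing to be careful about is that property (*) gives uniqueness \emph{up to a coefficient} among forms satisfying the \emph{first} Hodge–Riemann relations, which is exactly what we have for each $q_i$, so the reduction to a single $q$ is immediate.
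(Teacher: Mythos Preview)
Your approach is essentially identical to the paper's: pull back to $\Gamma$, use $\phi^*\eta=\psi^*(\sum_i\mathrm{pr}_i^*\eta_i)$, push the integral to $S_1\times\cdots\times S_n$, expand multinomially so that only the $(2,\ldots,2)$ term survives, and define $q_i$ as the pullback of the surface intersection form by the Hodge-structure morphism $\psi_i$; part (ii) is then immediate from property~(*). The one slip is your claim that $\deg\phi=\deg\psi$: there is no reason for this in a general surface decomposition (your appeal to the fibre-product diagram of Proposition~\ref{prolefschetz} is a non sequitur, since that diagram arises from a specific construction, not from the hypothesis of Definition~\ref{defiintrodecomp}), and one cannot ``arrange'' it. The paper simply keeps separate degrees $N=\deg\phi$ and $M=\deg\psi$, obtaining $N\int_X\eta^{2n}=M\cdot\tfrac{(2n)!}{2^n}\prod_i(\eta_i,\eta_i)_{S_i}$, and then absorbs the rational constant $M(2n)!/(2^nN)$ into the $q_i$ exactly as in your ``more cleanly'' alternative.
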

\begin{proof} We have by assumption, for any $\eta\in H^2(X,\mathbb{Q})_{tr}$,  an equality
\begin{eqnarray}\label{eqencoretiregamma} \phi^*\eta=\psi^*(\sum_{i=1}^n {\rm pr}_i^*\eta_i),
\end{eqnarray}
where $\psi,\,\phi$ are as in (\ref{eqdecompholo}).
For each surface $S_i$, we have the Poincar\'e pairing $(\,,\,)_{S_i}$
on $H^2(S_i,\mathbb{Q})_{tr}$ which satisfies the first and second Hodge-Riemann relations, and, as the morphism $\psi_i$ which maps $\eta$ to $\eta_i$
 is a morphism of Hodge structures (see (\ref{eqpourpsii})), it
provides an intersection form $q_i(\eta):=(\eta_i,\eta_i)_{S_i}$ on $ H^2(X,\mathbb{Q})_{tr}$,
which satisfies the first and weak second Hodge-Riemann relations.

Let now $N,\,M$ be the respective  degrees of the maps $\phi,\,\psi$. We deduce from
(\ref{eqencoretiregamma}) the following equality:
\begin{eqnarray}\label{eqencoretiregammaproduit} N\int_X\eta^{2n}
=M\int_{S_1\times\ldots \times S_n}\sum_{i=1}^n({\rm pr}_i^*\eta_i)^{2n}=
M\frac{(2n)!}{2^nn!} (\eta_1,\eta_1)_{S_1}\ldots  (\eta_n,\eta_n)_{S_n}.
\end{eqnarray}
Let $q_1(\eta):=(\eta_1,\eta_1)_{S_1},\ldots,\,q_n(\eta):=(\eta_n,\eta_n)_{S_n}$, where
the $\eta_i$'s are defined by (\ref{eqencoretiregamma}). Then (\ref{eqencoretiregammaproduit})  gives (\ref{eqintersectionpourXgen}) up to  a multiplicative coefficient, which proves (i).

We next assume property (*) which implies  that $(\eta_i,\eta_i)_{S_i}=\mu_iq(\eta)$
for some rational numbers $\mu_i$, since  $q_i$ satisfies the first  Hodge-Riemann relations.
Equation (\ref{eqencoretiregammaproduit})  then  gives:
$$N\int_X\eta^{2n}=M\frac{(2n)!}{2^nn!} \mu_1\ldots\mu_n q(\eta)^n,$$
proving (ii).
\end{proof}

Proposition \ref{propquadricfujiki}  (i) now implies  the following result, showing that having a surface decomposition is a restrictive condition:
 \begin{theo} \label{coronotdecomp} Let
$S_1,\,S_2,\,S_3$ be three smooth projective  surfaces with $h^{1,0}(S_i)=0,\,h^{2,0}(S_i)\not=0$ for all $i$, and let
$H={\rm pr}_1^*H_1+{\rm pr}_2^*H_2+{\rm pr}_3^*H_3\in {\rm Pic}\,(S_1\times S_2\times S_3)$ be a very ample divisor on $S_1\times S_2\times S_3$. Let $Y\subset S_1\times S_2\times S_3$
 be the smooth complete intersection of two general members of $|H|$.
Then $Y$ is not surface decomposable.
\end{theo}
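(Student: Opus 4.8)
The plan is to argue by contradiction, assuming $Y$ admits a surface decomposition, and to derive a contradiction from the factorization of the top self-intersection form on transcendental cohomology coming from Proposition \ref{propquadricfujiki}(i). First I would compute the transcendental cohomology of $Y$. Since $Y \subset S_1 \times S_2 \times S_3$ is a smooth complete intersection of two ample divisors and $\dim(S_1\times S_2\times S_3)=6$, $Y$ is a fourfold and the Lefschetz hyperplane theorem gives that the restriction map $H^2(S_1\times S_2\times S_3,\mathbb{Q}) \to H^2(Y,\mathbb{Q})$ is an isomorphism. Because $h^{1,0}(S_i)=0$ for all $i$, the Künneth formula yields $H^2(S_1\times S_2\times S_3,\mathbb{Q}) = \bigoplus_i \mathrm{pr}_i^* H^2(S_i,\mathbb{Q})$ as Hodge structures, so that $H^2(Y,\mathbb{Q})_{tr} = \bigoplus_{i=1}^3 H^2(S_i,\mathbb{Q})_{tr}$ (restricted to $Y$), an orthogonal direct sum of three nonzero weight-$2$ Hodge structures each carrying a form of type $(2,0)$.

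Next I would apply Proposition \ref{propquadricfujiki}(i): a surface decomposition of the fourfold $Y$ (here $n=2$) produces quadratic forms $q_1, q_2$ on $H^2(Y,\mathbb{Q})_{tr}$, each satisfying the first Hodge-Riemann relations, such that $\int_Y \eta^4 = q_1(\eta) q_2(\eta)$ for all transcendental $\eta$. On the other hand, I would compute the left-hand side directly on $Y$: for $\eta = \eta_1 + \eta_2 + \eta_3$ with $\eta_j$ transcendental on $S_j$, $\int_Y \eta^4$ is a polynomial in the three variables obtained by integrating $(\eta_1+\eta_2+\eta_3)^4 \cup [H]^2$ over $S_1\times S_2\times S_3$; since each $\eta_j^2$ squares to a nonzero multiple of the point class on $S_j$ and $\eta_j^3 = 0$ for dimension reasons, this integral is a nonzero constant times $q_1'(\eta)q_2'(\eta)q_3'(\eta)/(\text{something})$ — more precisely it is a symmetric combination of the three Poincaré forms $a_j := (\eta_j,\eta_j)_{S_j}$, and one checks it equals $c\,(a_1 a_2 + a_1 a_3 + a_2 a_3)$ for a positive constant $c$ depending on $H_1,H_2,H_3$ (the $a_1a_2a_3$ term vanishes because the total degree in the $S_j$-variables would be $6 > 4$).

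The contradiction is then algebraic: the quartic form $\eta \mapsto a_1 a_2 + a_1 a_3 + a_2 a_3$ on $V = V_1 \oplus V_2 \oplus V_3$ (where $V_j = H^2(S_j)_{tr}$ and $a_j$ is a nondegenerate quadratic form on $V_j$) cannot be written as a product $q_1 q_2$ of two quadratic forms. Indeed, factor over $\mathbb{C}$: if $a_1a_2 + a_1a_3 + a_2a_3 = q_1 q_2$, restrict to the locus $V_1$ (i.e. $a_2 = a_3 = 0$, meaning $\eta_2 = \eta_3 = 0$): the left side becomes $0$, so $q_1 q_2$ vanishes identically on the linear subspace $V_1$, hence (as a product of quadratics) one of $q_1, q_2$ vanishes on $V_1$; similarly for $V_2$ and $V_3$. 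By the pigeonhole principle one of the two forms, say $q_1$, vanishes on two of the three subspaces, say $V_1$ and $V_2$; but then setting $\eta_3 = 0$ we get $a_1 a_2 = q_1 q_2$ with $q_1|_{V_1 \oplus V_2} \equiv 0$, forcing $a_1 a_2 \equiv 0$ on $V_1 \oplus V_2$, contradicting nondegeneracy of $a_1$ and $a_2$ (pick $\eta_1, \eta_2$ with $a_1(\eta_1) \ne 0 \ne a_2(\eta_2)$). This rules out the factorization and completes the proof.

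The main obstacle I anticipate is the bookkeeping in the middle step: one must be careful that the "transcendental part" of $H^2(Y)$ really is the orthogonal sum of the three $H^2(S_j)_{tr}$ and that the intersection form $\int_Y(-)^4$ restricted to it has exactly the shape $c(a_1a_2+a_1a_3+a_2a_3)$ — in particular that no linear-in-$[H]$ cross terms survive and that the Poincaré pairing on each $V_j$ is indeed what appears. This is a routine but slightly delicate computation with the Künneth decomposition and the projection formula for the inclusion $Y \hookrightarrow S_1 \times S_2 \times S_3$; everything else is the soft Hodge-theoretic input of Proposition \ref{propquadricfujiki} and the elementary factorization argument above. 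One should also double-check that the $q_i$ produced by Proposition \ref{propquadricfujiki}(i) are genuinely quadratic forms on the whole of $H^2(Y,\mathbb{Q})_{tr}$ (they are, being the pullbacks $(\psi_i(\eta),\psi_i(\eta))_{S_i}$ of Poincaré forms under morphisms of Hodge structures), so that the purely algebraic non-factorization really applies.
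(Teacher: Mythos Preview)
Your overall strategy matches the paper's: use Lefschetz and K\"unneth to identify $H^2(Y,\mathbb{Q})_{tr}$ with $\bigoplus_i H^2(S_i,\mathbb{Q})_{tr}$, compute $\int_Y\eta^4$ explicitly, and then show the resulting quartic is not a product of two quadrics, contradicting Proposition~\ref{propquadricfujiki}(i).

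Two remarks. First, a minor computational slip: the integral does not come out as $c(a_1a_2+a_1a_3+a_2a_3)$ with a single constant~$c$. The three coefficients are $6\deg(H_i^2)$ and are in general distinct; the paper writes the result as $\lambda_1 q_2q_3+\lambda_2 q_1q_3+\lambda_3 q_1q_2$ with independent nonzero~$\lambda_i$. This does not affect your argument, since your restriction/pigeonhole reasoning never uses equality of the coefficients.

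Second, and more seriously, there is a genuine gap in the non-factorization step. From $q_1|_{V_1}\equiv 0$ and $q_1|_{V_2}\equiv 0$ you conclude $q_1|_{V_1\oplus V_2}\equiv 0$. This is false for quadratic forms: the bilinear cross term survives. For instance $q(x,y)=xy$ vanishes on both coordinate axes but not on the plane. So on $V_1\oplus V_2$ you only know $q_1(\eta_1+\eta_2)=2b_1(\eta_1,\eta_2)$ for the associated bilinear form~$b_1$, and the identity $\lambda_3\,a_1(\eta_1)a_2(\eta_2)=q_1q_2$ does not immediately collapse. The argument can be repaired by a bidegree analysis in $(\eta_1,\eta_2)$ (the left side is pure bidegree $(2,2)$, which forces further vanishing on the right), but this requires real additional work that you have not supplied.

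The paper sidesteps this by a different, more geometric, non-factorization argument: it shows that the quartic hypersurface $\{\lambda_1 q_2q_3+\lambda_2 q_1q_3+\lambda_3 q_1q_2=0\}$ in $\mathbb{P}(H^2(Y,\mathbb{C})_{tr})$ is irreducible, by observing that under the rational map $\alpha\mapsto (q_1(\alpha_1):q_2(\alpha_2):q_3(\alpha_3))$ to $\mathbb{P}^2$ it fibers with irreducible fibers over the smooth conic $\lambda_1y_2y_3+\lambda_2y_1y_3+\lambda_3y_1y_2=0$. An irreducible quartic hypersurface is obviously not a union of two quadrics. Your pigeonhole approach is more elementary in spirit and would work with the bidegree patch, but as written the crucial implication is unjustified.
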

 \begin{proof} As $h^{1,0}(S_i)=0$, we have
$$H^2(S_1\times S_2\times S_3,\mathbb{Q})=H^2(S_1,\mathbb{Q})\oplus  H^2(S_2,\mathbb{Q})
\oplus  H^2(S_3,\mathbb{Q})$$
and similarly for transcendental cohomology.
 By Lefschetz hyperplane section theorem, we get, as ${\rm dim }\,Y=4$:
$$H^2(Y,\mathbb{Q})_{tr}=H^2(S_1,\mathbb{Q})_{tr}\oplus H^2(S_2,\mathbb{Q})_{tr}\oplus
  H^2(S_3,\mathbb{Q})_{tr},$$
 We now compute $\int_Y\alpha^4$ for $\alpha\in H^2(Y,\mathbb{Q})_{tr}$:
For $\alpha=\alpha_1+\alpha_2+\alpha_3$, using $\int_{S_i}\alpha_i\cup h_i=0$, where $h_i:=c_1(H_i)$, we get
 $$\int_Y\alpha^4=\int_{S_1\times S_2\times S_3}({\rm pr}_1^*\alpha_1+{\rm pr}_2^*\alpha_2+{\rm pr}_3^*\alpha_3)^4({\rm pr}_1^*h_1+{\rm pr}_2^*h_2+{\rm pr}_3^*h_3)^2$$

 \begin{eqnarray}
\label{eqavecnum}=\lambda_1
q_2(\alpha_2)q_3(\alpha_3)+\lambda_2q_1(\alpha_1)q_3(\alpha_3)+\lambda_3q_1(\alpha_1)q_2(\alpha_2),
\end{eqnarray}
where $q_i(\alpha_i):=\int_{S_i}\alpha_i^2$, and the constants $\lambda_i$ are nonzero
 rational numbers.
 It is immediate to see that (\ref{eqavecnum}) is not
of the form (\ref{eqintersectionpourXgen}), namely the product of two quadrics in
$\alpha=\alpha_1+\alpha_2+\alpha_3$. Indeed,
the hypersurface in
$\mathbb{P}(H^2(Y,\mathbb{C})_{tr})$ defined by (\ref{eqavecnum}) is irreducible,
being  fibered   with irreducible fibers  over the smooth conic in $\mathbb{P}^2_\mathbb{C}$
with equation $\lambda_1 y_2y_3+\lambda_2 y_1y_3+\lambda_3y_1y_2=0$, via the rational map
$$ \mathbb{P}(H^2(Y,\mathbb{C})_{tr})\dashrightarrow \mathbb{P}^2_\mathbb{C},$$
$$\alpha=\alpha_1+\alpha_2+\alpha_3\mapsto (q_1(\alpha_1),q_2(\alpha_2),q_3(\alpha_3)).$$
\end{proof}
\subsection{Application to Beauville weak splitting conjecture}
In the paper \cite{beauvoi}, it was observed that a projective $K3$ surface has the following
property:
there is a canonical $0$-cycle $o_S\in{\rm CH}_0(S)$ of degree $1$ (in fact, it can be defined
as $\frac{c_2(S)}{24}$)
such that for any divisor $D\in {\rm Pic}\,S={\rm CH}^1(S)={\rm NS}(S)$, one has
\begin{eqnarray}\label{eqchowBV} D^2=q(D)o_S\,\,{\rm in}\,\,{\rm CH}_0(S),
\end{eqnarray}
where $q(D)=([D],[D])_S$.
One can rephrase this result by saying that any cohomological polynomial relation
$$Q([D_1],\ldots,[D_k])=0\,\,{\rm in}\,\, H^*(S,\mathbb{Q})$$ involving
 only divisor classes is already
 satisfied in ${\rm CH}(S)_{\mathbb{Q}}$.
In \cite{beau}, Beauville made the following conjecture, generalizing the result above:
\begin{conj}\label{conjws} Let $X$ be a projective hyper-K\"ahler manifold. Then
the cycle class map is injective on the subalgebra of ${\rm CH}^*(X)_{\mathbb{Q}}$ generated by
divisor classes.
\end{conj}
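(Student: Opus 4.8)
The plan is to attack Conjecture~\ref{conjws} through the surface decomposition machinery developed above, which transfers Chow-theoretic relations from products of surfaces---where, for $K3$ and abelian surfaces, the Beauville--Voisin relation~(\ref{eqchowBV}) is available---to the hyper-K\"ahler manifold $X$. The argument is necessarily conditional on two hypotheses that the rest of the paper is devoted to establishing in examples: (a) that $X$ is a member of a polarized family $(X_t)_{t\in B}$ whose general member admits a surface decomposition $\phi:\Gamma\to X$, $\psi:\Gamma\to S_1\times\cdots\times S_n$ as in Definition~\ref{defiintrodecomp} with $\Gamma$ simply connected (this is the content of Theorem~\ref{theodec} for the explicit constructions of Section~\ref{secexamples}); and (b) that the surfaces $S_i$ are $K3$ or abelian, so that (\ref{eqchowBV}) holds on each of them. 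I note at the outset that for a very general hyper-K\"ahler manifold of Picard number $1$ the conclusion is vacuous---the subalgebra of $H^*(X)$ generated by $h=c_1(L)$ is $\mathbb{Q}[h]/(h^{2n+1})$ and $h^{2n+1}=0$ already in $\mathrm{CH}(X)$ for dimension reasons---so the content lies in special members whose Picard rank has jumped.

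First I would treat the divisor classes $D$ lying in $\mathrm{NS}(X)^{\perp\Lambda}$, where $\Lambda$ is the generic Picard lattice of the family. Such a $D$ is the specialization of a transcendental class on the nearby fibres, so the defining identity $\phi^*D=\psi^*(\sum_i\mathrm{pr}_i^*d_i)$ of the surface decomposition applies to it, with $d_i=\psi_i(D)\in\mathrm{NS}(S_i)_{\mathbb{Q}}$; a priori this holds only in $H^2(\Gamma,\mathbb{Q})$, but since $\Gamma$ is simply connected $\mathrm{CH}^1(\Gamma)_{\mathbb{Q}}\hookrightarrow H^2(\Gamma,\mathbb{Q})$, so it in fact holds in $\mathrm{CH}^1(\Gamma)_{\mathbb{Q}}$. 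As $\phi^*$ and $\psi^*$ are injective ring homomorphisms on rational Chow groups, any polynomial relation among finitely many such $D$'s that holds in $H^*(X)$ pulls back via $\phi^*$ to the same polynomial relation among the $\psi^*(\sum_i\mathrm{pr}_i^*d_i)$, which is the image under the injective map $\psi^*$ of the corresponding relation among divisors on $S_1\times\cdots\times S_n$; hence it suffices to prove that relation in $\mathrm{CH}^*(S_1\times\cdots\times S_n)_{\mathbb{Q}}$. (When the $S_i$ are not so explicit, Theorem~\ref{theoconsdeprodecompws}---resting on Theorem~\ref{prodecompws}, where simple connectivity of $\Gamma$ is used---reduces matters further to the single top-degree relation $D^{2n}\propto o_{X_t}$.)

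Next I would settle the reduced statement on the product of $K3$ surfaces. Since $H^1(S_i,\mathcal{O}_{S_i})=0$, one has $\mathrm{NS}(S_1\times\cdots\times S_n)=\bigoplus_i\mathrm{pr}_i^*\mathrm{NS}(S_i)$, so every divisor is of the form $\sum_i\mathrm{pr}_i^*d_i$. In $\mathrm{CH}^*(\prod_iS_i)_{\mathbb{Q}}$ a monomial in the $\mathrm{pr}_i^*d_i$ involves on each factor a power $d_i^{k_i}$, which vanishes for $k_i\geq 3$ by dimension and equals $(d_i,d_i)_{S_i}\,o_{S_i}$ for $k_i=2$ by (\ref{eqchowBV}); hence the subalgebra generated by divisors is contained in $\bigotimes_i\bigl(\mathbb{Q}\,1_{S_i}\oplus\mathrm{NS}(S_i)_{\mathbb{Q}}\oplus\mathbb{Q}\,o_{S_i}\bigr)$. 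Each factor injects into $H^0(S_i)\oplus H^2(S_i)\oplus H^4(S_i)$ for degree reasons, so by the K\"unneth formula the whole subalgebra injects into $H^*(\prod_iS_i,\mathbb{Q})$, which is exactly the weak splitting property for $\prod_iS_i$. Combined with the previous paragraph this proves the conjecture for the classes in $\mathrm{NS}(X)^{\perp\Lambda}$. The powers $h^k$ of the polarization and the mixed monomials $h^k\cdot D^{2n-k}$ with $k\geq 1$ fall outside the scope of the decomposition and must be dealt with by hand, using in each geometric example the extra structure available---a constant-cycle Lagrangian, or a rational map exhibiting $h$ as a pullback from a surface.

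The main obstacle is hypothesis (a): surface decomposability is itself open, and the reduction carried out in the rest of the paper (Theorem~\ref{proexistesurfdecintro}) only derives it from the existence of a triangle variety together with algebraically coisotropic subvarieties of dimension $n+1$ meeting an appropriate cycle in a nonzero number---conditions verified one family at a time rather than in general. The second, genuinely resistant point is that every surface decomposition one can exhibit realizes~(\ref{eqdecomp}) only on primitive cohomology, so the polarization class, and with it the full lattice $\Lambda$, always requires a separate and geometry-dependent argument; finding a way to control $h^k$ uniformly across a deformation family, rather than example by example, is the ingredient missing for an unconditional proof.
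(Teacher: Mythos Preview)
The statement is Beauville's weak splitting conjecture, which is open; the paper does not prove it, so there is no proof in the paper to compare your proposal against. What you have written is not a proof but a conditional strategy, and you are candid about this: hypotheses~(a) and~(b) are the missing ingredients, and your final paragraph correctly locates the two genuine obstacles (surface decomposability for arbitrary deformation types, and the handling of the polarization class~$h$). Your strategy is essentially the one the paper develops around Theorem~\ref{prodecompws} and its corollary (Theorem~\ref{theoconsdeprodecompws}); your transfer argument via $\phi^*$ and $\psi^*$ is the content of equations~(\ref{eqphipsigenHKCH1})--(\ref{eqphipsigenHKCHl}), and your verification that the subalgebra generated by divisors on a product of $K3$ surfaces injects into cohomology is a clean packaging of the ``if'' direction of Theorem~\ref{prodecompws} under hypothesis~(b).

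One correction worth making: your hypothesis~(b) is stronger than what the paper uses, and in practice it does not hold. The paper never assumes the $S_i$ are $K3$ or abelian; Theorem~\ref{prodecompws} shows that the weak splitting property for $\mathrm{NS}(X_t)^{\perp\Lambda}$ is \emph{equivalent} to the Beauville--Voisin relation on $\mathrm{Im}\,\psi_{i,t}\subset\mathrm{NS}(S_{i,t})_{\mathbb{Q}}$, with no hypothesis on the $S_{i,t}$. In the examples of Theorem~\ref{theodec} the surface $\Sigma$ that actually arises---for $F_1(Y)$, the indeterminacy locus of the Voisin self-map---is not a $K3$ surface, so your direct K\"unneth argument does not apply and one must instead verify the top-degree statement $D^{2n}\propto o_{X_t}$ by hand and invoke Theorem~\ref{theoconsdeprodecompws}. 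You mention this route parenthetically, but it is the one that is operative in every case the paper treats, not a fallback.
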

Let us discuss Conjecture \ref{conjws} in relation with the notion of surface decomposition.
Let $X$ be a projective hyper-K\"ahler manifold, and let
$\Lambda\subset {\rm NS}(X)$ be a lattice polarization (which means that
$\Lambda$ contains an ample class. We assume that
the general $\Lambda$-polarized deformation $X_t$
(parameterized by  a quasiprojective basis $B$) of $X$ has a surface decomposition. Then, by standard
spreading arguments involving relative Chow varieties, after
passing to a generically finite cover $B'$ of $B$, we have
   projective morphisms $\Gamma\rightarrow B'$, $\mathcal{S}_i\rightarrow B'$,
   with ${\rm dim}\,\mathcal{S}_i/B'=2$,
   and morphisms over $B'$
  $$\phi:\Gamma\rightarrow \mathcal{X},\,\,\psi:\Gamma\rightarrow
  \mathcal{S}_1\times_{ B'}\ldots\times_{ B'}\mathcal{S}_n$$
  inducing a surface decomposition at the general point $t\in B'$.  After shrinking $B'$,
   by desingularization of the general fiber,  one can assume
 that the fibers $\Gamma_t$ and $S_{i,t}$ are  smooth and we get by specialization
 a diagram
\begin{eqnarray}
\label{eqphipsigenHK} \phi_t:\Gamma_t\rightarrow X_t,\,\,\psi_t:\Gamma_t
\rightarrow S_{1,t}\times\ldots\times S_{n,t}
\end{eqnarray}
  such that
\begin{eqnarray}\label{eqphipsigenHKforms}\phi_t^*\sigma_{X_t}=
\psi_t^*(\sum_i{\rm pr}_i^*\sigma_{S_{i,t}})\,\,{\rm in}\,\,H^{2,0}(\Gamma_t)
\end{eqnarray}
for some $(2,0)$-forms $\sigma_{S_{i,t}}$ on $S_{i,t}$.
We already observed that the relation (\ref{eqphipsigenHKforms}) in fact holds for any class
$\alpha\in H^2(X_t,\mathbb{Q})^{\perp\Lambda}$ and that
there is for each $i$ a (locally constant) morphism of Hodge structures
$$ \psi_{i,t}:H^2(X_t,\mathbb{Q})^{\perp\Lambda}\rightarrow H^2(S_{i,t},\mathbb{Q})$$
given by (\ref{eqpourpsii}) such that
\begin{eqnarray}\label{eqphipsigenHKclass}\phi_t^*\alpha=
\psi_t^*(\sum_i{\rm pr}_i^*(\psi_{i,t}(\alpha)))\,\,{\rm in}\,\,H^{2}(\Gamma_t,\mathbb{Q}).
\end{eqnarray}

Let us now  assume furthermore that $H^1(\Gamma_t,\mathbb{Z})=0$, or equivalently
\begin{eqnarray}\label{eqphipsigenHKformscondpic}{\rm NS}\,(\Gamma_t)={\rm Pic}\,(\Gamma_t).
\end{eqnarray}
In the situation described above, we have
 the following result.
\begin{theo}\label{prodecompws} For any  $t\in B'$, the weak splitting property
 holds  for divisor classes on $X_t$
which are in $H^2(X_t,\mathbb{Q})^{\perp\Lambda}$ if and only if, for each surface
$S_{i,t}$, the Beauville-Voisin relation (\ref{eqchowBV}) holds on ${\rm Im}\,\psi_{i,t}$ for an adequate $0$-cycle
$o_{S_{i,t}}\in{\rm CH}_0(S_{i,t})$.
\end{theo}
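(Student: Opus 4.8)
The plan is to prove the two implications separately, both resting on the following observation. Since $H^1(\Gamma_t,\mathbb{Z})=0$, that is, $\mathrm{CH}^1(\Gamma_t)_{\mathbb{Q}}=\mathrm{Pic}(\Gamma_t)_{\mathbb{Q}}$ injects into $H^2(\Gamma_t,\mathbb{Q})$ by (\ref{eqphipsigenHKformscondpic}), the cohomological identity (\ref{eqphipsigenHKclass}) already holds in degree-one Chow groups: for $D\in\mathrm{NS}(X_t)^{\perp\Lambda}$ one has $\phi_t^*D=\psi_t^*(\sum_{l=1}^n{\rm pr}_l^*\delta_l)$ in $\mathrm{CH}^1(\Gamma_t)_{\mathbb{Q}}$, for \emph{any} lifts $\delta_l\in\mathrm{CH}^1(S_{l,t})_{\mathbb{Q}}$ of the classes $\psi_{l,t}(D)\in\mathrm{NS}(S_{l,t})_{\mathbb{Q}}$ (both sides are divisor classes on $\Gamma_t$ agreeing in $H^2$, and changing $\delta_l$ modifies the right side by $\psi_t^*{\rm pr}_l^*$ of a homologically trivial divisor, hence by $0$). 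I will use freely that $\phi_{t*}\phi_t^*=N\cdot\mathrm{id}$ and $\psi_{t*}\psi_t^*=M\cdot\mathrm{id}$ (on Chow groups and on cohomology), that consequently $\psi_t^*$ is injective on $H^*(P)$ for $P:=S_{1,t}\times\cdots\times S_{n,t}$, and that a product of three divisor classes on a surface vanishes.

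$(\Leftarrow)$ Assume (\ref{eqchowBV}) holds for every divisor class in $\mathrm{Im}\,\psi_{i,t}$, with $0$-cycle $o_{S_{i,t}}$; by polarization this gives $\delta\,\delta'=(\,[\delta],[\delta']\,)_{S_{i,t}}\,o_{S_{i,t}}$ in $\mathrm{CH}_0(S_{i,t})_{\mathbb{Q}}$ whenever $[\delta],[\delta']\in\mathrm{Im}\,\psi_{i,t}$. For each $l$, fix lifts in $\mathrm{CH}^1(S_{l,t})_{\mathbb{Q}}$ of a $\mathbb{Q}$-basis of $\psi_{l,t}(\mathrm{NS}(X_t)^{\perp\Lambda}_{\mathbb{Q}})$, so every $\psi_{l,t}(D)$ acquires a distinguished lift $\delta_l(D)$ depending $\mathbb{Q}$-linearly on $D$. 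Let $Q$ be homogeneous with $Q([D_1],\dots,[D_k])=0$ in $H^*(X_t,\mathbb{Q})$, $D_j\in\mathrm{NS}(X_t)^{\perp\Lambda}$, and set $R:=Q(\sum_l{\rm pr}_l^*\delta_l(D_1),\dots,\sum_l{\rm pr}_l^*\delta_l(D_k))\in\mathrm{CH}^*(P)_{\mathbb{Q}}$. Expanding, monomials in which some factor $S_l$ occurs at least three times vanish, and applying the relation above to the factors from each $S_l$ occurring exactly twice rewrites $R$ as a $\mathbb{Q}$-linear combination of ``standard'' products $\prod_l{\rm pr}_l^*m_l$, with each $m_l$ ranging over $1_{S_{l,t}}$, the fixed basis lifts, and $o_{S_{l,t}}$. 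The same formal computation in cohomology expresses $[R]$ as the identical combination of the classes $\prod_l{\rm pr}_l^*[m_l]$, which are linearly independent in $H^*(P,\mathbb{Q})$ by Künneth ($1$, the basis classes, and the point class being independent in each $H^*(S_{l,t},\mathbb{Q})$); and $[R]=0$, since $\psi_t^*[R]=\phi_t^*(Q([D_1],\dots))=0$ and $\psi_t^*$ is injective on $H^*(P)$. Hence all coefficients vanish, so $R=0$ in $\mathrm{CH}^*(P)_{\mathbb{Q}}$, giving $Q(\phi_t^*D_1,\dots)=\psi_t^*R=0$ in $\mathrm{CH}^*(\Gamma_t)_{\mathbb{Q}}$ and then $Q(D_1,\dots)=\tfrac1N\phi_{t*}(\psi_t^*R)=0$.

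$(\Rightarrow)$ Assume weak splitting for $\mathrm{NS}(X_t)^{\perp\Lambda}$; we may assume $\mathrm{NS}(X_t)^{\perp\Lambda}\neq 0$ (otherwise the statement is empty on both sides), whence $q$ is negative definite, in particular nonzero, on it. In top degree, weak splitting yields a degree-one cycle $o_{X_t}\in\mathrm{CH}_0(X_t)_{\mathbb{Q}}$ with $D^{2n}=\lambda q(D)^n\,o_{X_t}$ for all $D\in\mathrm{NS}(X_t)^{\perp\Lambda}$: for $q(D_0)\neq 0$ the cycles $D_0^{2n}/(\lambda q(D_0)^n)$ all have the class of a point, so coincide in $\mathrm{CH}_0$, and $D^{2n}$, $\lambda q(D)^n o_{X_t}$ have the same class by Beauville--Fujiki. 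Fix $i$ and put $o_{S_{i,t}}:=\tfrac1N({\rm pr}_i\circ\psi_t)_*\phi_t^*o_{X_t}\in\mathrm{CH}_0(S_{i,t})_{\mathbb{Q}}$, of degree $1$. For $D$ with $q(D)\neq 0$, choose lifts $\delta_l$ of $\psi_{l,t}(D)$; raising $\phi_t^*D=\psi_t^*(\sum_l{\rm pr}_l^*\delta_l)$ to the power $2n$ and keeping only the surviving term $\prod_l{\rm pr}_l^*\delta_l^2$ gives $\lambda q(D)^n\phi_t^*o_{X_t}=\phi_t^*(D^{2n})=c_n\,\psi_t^*(\prod_l{\rm pr}_l^*\delta_l^2)$ in $\mathrm{CH}_0(\Gamma_t)_{\mathbb{Q}}$ for a fixed nonzero constant $c_n$. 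Apply $({\rm pr}_i\circ\psi_t)_*={\rm pr}_{i*}\psi_{t*}$: the left side becomes $\lambda q(D)^n N\,o_{S_{i,t}}$, and the right side, using $\psi_{t*}\psi_t^*=M$, the projection formula, and $\deg(\delta_l^2)=q_l(D):=(\psi_{l,t}(D),\psi_{l,t}(D))_{S_{l,t}}$, becomes $c_n M\,(\prod_{l\neq i}q_l(D))\,\delta_i^2$; here $\prod_{l\neq i}q_l(D)\neq 0$ because $\prod_l q_l(D)$ is, up to a fixed nonzero constant, $\int_{X_t}D^{2n}=\lambda q(D)^n\neq 0$ (Beauville--Fujiki and Proposition \ref{propquadricfujiki}). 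Thus $\delta_i^2=c\cdot o_{S_{i,t}}$ for some $c\in\mathbb{Q}$, and taking degrees forces $c=\deg(\delta_i^2)=q_i(D)$, so $\delta_i^2=q_i(D)\,o_{S_{i,t}}$; as this holds for any lift $\delta_i$ of $\psi_{i,t}(D)$, relation (\ref{eqchowBV}) holds for the classes $\psi_{i,t}(D)$ with $q(D)\neq 0$. For $q(D)=0$, fix $D'$ with $q(D')\neq 0$ and apply the previous step to $D+sD'$ for the infinitely many $s\in\mathbb{Q}$ with $q(D+sD')\neq 0$; substituting the already-known relation for $D'$ into the $s^2$-term and comparing coefficients of the resulting affine identity in $s$ (valued in $\mathrm{CH}_0(S_{i,t})_{\mathbb{Q}}$) gives $\psi_{i,t}(D)^2=q_i(D)\,o_{S_{i,t}}$. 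Hence (\ref{eqchowBV}) holds on $\mathrm{Im}\,\psi_{i,t}$, with the single $0$-cycle $o_{S_{i,t}}$.

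The delicate direction is $(\Rightarrow)$: there is no formal mechanism to push a rational equivalence from $X_t$ down to the surfaces $S_{i,t}$, and the argument succeeds only because the identity transported lives in top degree — where push-forward along ${\rm pr}_i\circ\psi_t$ together with the projection formula cleanly isolates $\delta_i^2$ — and because the Beauville--Fujiki relation furnishes the nonvanishing $\prod_{l\neq i}q_l(D)\neq 0$ needed to solve for it, the residual $q(D)=0$ classes being recovered by a one-parameter deformation. Throughout, the hypothesis $H^1(\Gamma_t,\mathbb{Z})=0$ plays the double role of upgrading (\ref{eqphipsigenHKclass}) to an identity in $\mathrm{CH}^1(\Gamma_t)_{\mathbb{Q}}$ and of annihilating the $\mathrm{Pic}^0(S_{l,t})$-ambiguity in the lifts, so that every intermediate computation with $0$-cycles is independent of the choices made.
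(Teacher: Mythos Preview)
Your proof is correct. The $(\Rightarrow)$ direction follows the paper's argument essentially verbatim (the extra paragraph treating $q(D)=0$ is harmless but unnecessary: you already noted that $q$ is negative definite on $\mathrm{NS}(X_t)^{\perp\Lambda}$, so no nonzero $D$ there has $q(D)=0$).

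The $(\Leftarrow)$ direction, however, is genuinely different from the paper's and in a sense more elementary. The paper argues as follows: using that the Mumford--Tate group at the very general point is the full orthogonal group, one obtains $q_i=\mu_i q$ on $H^2(X_t,\mathbb{Q})^{\perp\Lambda}$ for all $t$; hence $q(D)=0$ forces $D_i^2=0$ in $\mathrm{CH}_0(S_{i,t})_{\mathbb{C}}$ via the assumed Beauville--Voisin relation, and expanding $\phi_t^*(D^{n+1})$ then kills every term. One concludes by invoking the Bogomolov--Verbitsky theorem that the cohomological relations among divisors are generated by $d^{n+1}=0$ for $q(d)=0$. Your route bypasses both the Mumford--Tate/very-general-point step and Bogomolov--Verbitsky entirely: you transport an \emph{arbitrary} cohomological relation $Q([D_1],\dots,[D_k])=0$ to the product $P=\prod_l S_{l,t}$, reduce $R$ to a linear combination of ``standard'' Künneth monomials using only the surface relations, and then read off the vanishing of all coefficients from linear independence of their cohomology classes. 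What the paper's approach buys is a structural explanation tying the weak splitting to the shape of the cohomology ring; what yours buys is independence from the deep input of \cite{bogover} and from the specialisation argument, so it works uniformly at every $t\in B'$ without passing through the very general fibre.
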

\begin{proof} Using (\ref{eqphipsigenHKformscondpic}), we conclude that
(\ref{eqphipsigenHKclass}) holds for $\alpha\in {\rm Pic}\,(X_t)^{\perp\Lambda}={\rm NS}\,(X_t)^{\perp\Lambda}$ (where the point $t$
is now special in $B'$, being in a Noether-Lefschetz locus),
and more precisely, that the morphism of Hodge structures
 $\psi_{i,t}$ induces for any $t\in B'$ a $\mathbb{Q}$-linear map
 $$\psi_{i,t}:{\rm Pic}\,(X_t)_{\mathbb{Q}}^{\perp\Lambda}\rightarrow {\rm Pic}\,(S_{i,t})_{\mathbb{Q}}$$
 such that,  for any $D\in {\rm Pic}\,(X_t)_{\mathbb{Q}}^{\perp\Lambda}$:
\begin{eqnarray} \label{eqphipsigenHKCH1}\phi_t^*D=
\psi_t^*(\sum_i{\rm pr}_i^*(\psi_{i,t}(D)))\,\,{\rm in}
\,\,{\rm Pic}(\Gamma_t)_{\mathbb{Q}}={\rm CH}^1(\Gamma_t)_{\mathbb{Q}}.
\end{eqnarray}
As in the cohomological setting  which  has been studied in the previous section, the important
point here is the fact that the pull-back maps appearing on both sides are  compatible
with intersection product. Note also that they are injective since the maps $\phi_t$ and $\psi_t$
are dominant.
For any point $t\in B$, let $D_1,\ldots,\, D_k\in {\rm CH}^1(X)_{\mathbb{Q}}$ and let
$Q$ be a degree $l$ homogeneous polynomial
with $\mathbb{Q}$-coefficients  in $k$ variables. Then we get from
(\ref{eqphipsigenHKCH1}):

\begin{eqnarray} \label{eqphipsigenHKCHl}\phi_t^*Q(D_1,\ldots,D_k)=
\psi_t^*(Q(D'_1,\ldots,\,D'_k))\,\,\,\,\,{\rm in}
\,\,{\rm CH}^l(\Gamma_t)_{\mathbb{Q}},\,\,\,{\rm where}\,\,\,\, D'_j:=\sum_i{\rm pr}_i^*(\psi_{i,t}(D_j)).
\end{eqnarray}
Assume that $X_t$ satisfies the weak splitting property, at least for divisor classes
$D\in{\rm CH}^1(X_t)^{\perp \Lambda}$.
There is then a $0$-cycle $o_X\in{\rm CH}_0(X)$ of degree
$1$ such that
\begin{eqnarray}\label{eqweakslit} D^{2n}=({\rm deg}\,D^n)o_X\,\,{\rm in}\,\,{\rm CH}_0(X)
\end{eqnarray}
for any $D\in {\rm CH}^1(X_t)^{\perp \Lambda}$.
Pulling-back to $\Gamma_t$ and using (\ref{eqphipsigenHKCHl}), we have now
\begin{eqnarray}\label{eqweakslitaveccomb}
\phi_t^*(D^{2n})=\frac{(2n)!}{2^nn!}\psi_t^*(\prod_{j=1}^n {\rm pr}_j^*(\psi_{j,t}(D)^2))
\,\,{\rm in}\,\,{\rm CH}_0(\Gamma_t).
\end{eqnarray}
Note that any $D\in {\rm CH}^1(X_t)^{\perp \Lambda}$ satisfies
$q(D)\not=0$ by the Hodge index theorem, where $q$ is the Beauville-Bogomolov quadratic form on
$H^2(X_t,\mathbb{Q})$, which can also be defined as the Lefschetz intersection
pairing on $H^2(X_t,\mathbb{Q})^{\perp\Lambda}$
(see \cite{beaujdg}). As we have ${\rm deg}\,D^{2n}=\lambda q([D])^n$ with $\lambda\not=0$  by
(\ref{eqintersection}), we conclude that  ${\rm deg}\,D^{2n}\not=0$.
Let $o_{S_{j,t}}:=
{\rm pr}_{j*}(\frac{1}{{\rm deg}\,\phi_t} (\psi_{t*}(\phi_t^* o_X)))
\,\,\in \,{\rm CH}_0(S_{j,t})_{\mathbb{Q}}$.
 This cycle
has degree $1$ and
we get from
(\ref{eqweakslitaveccomb}) by pushing-forward to $S_{j,t}$ via ${\rm pr}_j\circ \psi$ that
$\psi_{j,t}(D)^2$ is proportional to $o_{S_{j,t}}$. Indeed,
$({\rm pr}_j\circ \psi)_*(\phi_t^*(D^{2n}) )$ is a $0$-cycle of degree different from $0$ on $S_{j,t}$, which by
(\ref{eqweakslitaveccomb}) is proportional to both  $o_{S_{j,t}}$ and   $\psi_{j,t}(D)^2$.
This proves the ``only if'' direction.

Conversely, assume each surface $S_{i,t}$ has a $0$-cycle $o_{S_{i,t}}$ of degree $1$
with the property that divisors $D_i$ in ${\rm Im}\,\psi_{i,t}\subset {\rm NS}(S_{i,t})_\mathbb{Q}={\rm Pic}\,(S_{i,t})_\mathbb{Q}$
satisfy
${D}_i^2=( D_i,D_i)_{S_{i,t}} o_{S_{i,t}}$ in ${\rm CH}_0(S_{i,t})$ or equivalently
that for any  $D_i,\,D'_i \in {\rm Im}\,\psi_{i,t}$
\begin{eqnarray}
\label{eqouddrepddprime}{D}_i\cdot D'_i=(D_i,D'_i)_{S_{i,t}} o_{S_{i,t}}\,\,{\rm
 in }\,\,{\rm CH}_0(S_{i,t}).
\end{eqnarray}
We now use the fact that, at  the very general point of $B'$, the Mumford-Tate group
 of the Hodge structure
on $H^2(X_t,\mathbb{Q})^{\perp\Lambda}$ is the orthogonal group, and thus the
intersection form $\psi_{i,t}^*((\,,\,)_{S_{i,t}})$ equals
$\mu_i q$ on $H^2(X_t,\mathbb{Q})^{\perp\Lambda}$, for some coefficient $\mu_i$.
It then follows from (\ref{eqouddrepddprime}) that
a numerical  relation $q(D)=0$ for $D\in\,{\rm Pic}\,(X_t)_\mathbb{C}$ produces
relations
\begin{eqnarray}
\label{eqrelSi}  D_i^2=0\,\,{\rm in}\,{\rm CH}_0(S_{i,t})_\mathbb{C},
\end{eqnarray}
for any $i=1,\ldots, n$, where $D_i:=\psi_{i,t}(D)$.

By \cite{bogover}, we know that the
relations in the subalgebra of $H^*(X_t,\mathbb{C})$ generated by ${\rm Pic}\,(X_t)_\mathbb{C}$
are
generated by the Bogomolov-Verbitsky  relations
\begin{eqnarray}\label{eqbogover}
d^{n+1}=0\,\,{\rm if}\,\,q(d)=0.
\end{eqnarray} This is true as well if we restrict to
the subalgebra generated by ${\rm Pic}\,(X_t)^{\perp\Lambda}_\mathbb{C}$.
Finally
(\ref{eqphipsigenHKCHl}) provides for any $D\in {\rm Pic}\,(X_t)^{\perp\Lambda}_\mathbb{C}$
\begin{eqnarray}\label{eqdn+1}\phi_t^*( D^{n+1})=\psi_t^*((\sum_{i =1}^n{\rm pr}_i^* D_i)^{n+1})=
\sum_i{\rm pr}_1^*D_1\cdot\ldots \cdot {\rm pr}_i^*D_i^2\cdot {\rm pr}_n^* D_n\\
\nonumber
+\ldots\,\,
{\rm in}\,\,{\rm CH}(\Gamma_t)_\mathbb{C},
\end{eqnarray}
where the remaining term ``$\ldots$''  involves products ${\rm pr}_i^*D_i^2\cdot {\rm pr}_j^*D_j^2$
 of two squares, then three squares ${\rm pr}_i^*D_i^2\cdot {\rm pr}_j^*D_j^2\cdot  {\rm pr}_k^*D_k^2$ etc...
Using (\ref{eqrelSi}) and (\ref{eqdn+1}), we get $\phi_t^*(D^{n+1})=0$
in ${\rm CH}^{n+1}(\Gamma_t)_\mathbb{C}$, hence
$D^{n+1}=0$ in ${\rm CH}^{n+1}(X_t)_\mathbb{C}$, whenever $q(D)=0$. In other words,  the Bogomolov-Verbitsky
 relations (\ref{eqbogover}) are satisfied in
${\rm CH}^{n+1}(X_t)_\mathbb{C}$, which concludes the proof.
\end{proof}
We get the following corollary:
\begin{coro} (Cf. Theorem \ref{theoconsdeprodecompws}) Under the same assumptions
as in Theorem \ref{prodecompws}, the weak splitting property holds  for divisor classes on $X_t$
which are in $H^2(X_t,\mathbb{Q})^{\perp\Lambda}$ if and only if they hold in top degree, that is,

(*)  there exists
a canonical $0$-cycle $o_{X_t}\in{\rm CH}_0(X_t)$ such that for any $D\in {\rm NS}(X_t)^{\perp \Lambda}$,
$D^{2n}$ is proportional to  $o_{X_t}$ in ${\rm CH}_0(X_t)$.
\end{coro}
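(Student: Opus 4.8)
The plan is to deduce the corollary from Theorem \ref{prodecompws} by observing that the proof of that theorem in fact establishes each of its two implications using only ``half'' of the weak splitting hypothesis. One implication of the equivalence is formal, and the real content is the remark that the ``only if'' part of the proof of Theorem \ref{prodecompws} never uses anything beyond the top-degree property (*).

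First I would treat the easy implication ``weak splitting $\Rightarrow$ (*)''. Suppose the cycle class map is injective on the subalgebra $R\subset{\rm CH}^*(X_t)_{\mathbb{Q}}$ generated by ${\rm NS}(X_t)^{\perp\Lambda}$. Its top graded piece $R^{2n}$ then injects into the one-dimensional space $H^{4n}(X_t,\mathbb{Q})$, so $R^{2n}$ is at most one-dimensional; by the Hodge index theorem $q(D)\not=0$ for $D\in{\rm NS}(X_t)^{\perp\Lambda}$, and by the Beauville--Fujiki relation (\ref{eqintersection}) ${\rm deg}\,D^{2n}=\lambda q(D)^n\not=0$, so $R^{2n}$ is exactly one-dimensional and is spanned by a cycle of nonzero degree, which after rescaling we call $o_{X_t}$. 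Matching degrees then gives $D^{2n}=({\rm deg}\,D^{2n})\,o_{X_t}$ in ${\rm CH}_0(X_t)$ for every such $D$, which is property (*) in the normalised form (\ref{eqweakslit}).

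For the converse ``(*) $\Rightarrow$ weak splitting'', I would return to the proof of Theorem \ref{prodecompws}. Its ``only if'' direction uses the weak splitting hypothesis exclusively through equation (\ref{eqweakslit}), which is exactly property (*); granting (*), that same argument produces, for each $i$, a degree-one $0$-cycle $o_{S_{i,t}}={\rm pr}_{i*}\bigl(\tfrac{1}{{\rm deg}\,\phi_t}\psi_{t*}(\phi_t^*o_{X_t})\bigr)$ on $S_{i,t}$ such that $\psi_{i,t}(D)^2$ is proportional to $o_{S_{i,t}}$ in ${\rm CH}_0(S_{i,t})$ for every $D\in{\rm NS}(X_t)^{\perp\Lambda}_{\mathbb{Q}}$, and comparing degrees (using ${\rm deg}\,D^{2n}\not=0$ once more) identifies the proportionality constant as $(\psi_{i,t}(D),\psi_{i,t}(D))_{S_{i,t}}$. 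Hence the Beauville--Voisin relation (\ref{eqchowBV}) holds on ${\rm Im}\,\psi_{i,t}$ for every $i$; since ${\rm Im}\,\psi_{i,t}$ is a $\mathbb{Q}$-subspace of ${\rm Pic}(S_{i,t})_{\mathbb{Q}}$, its bilinear refinement (\ref{eqouddrepddprime}) follows by polarisation. We are now precisely in the hypotheses of the ``if'' direction of Theorem \ref{prodecompws}, which yields the weak splitting property for divisor classes in $H^2(X_t,\mathbb{Q})^{\perp\Lambda}$, completing the equivalence.

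There is essentially no analytic or geometric obstacle here; the corollary is a repackaging of Theorem \ref{prodecompws}. The one point requiring care is to check that the ``only if'' half of that theorem's proof invokes nothing stronger than (*) --- in particular that it uses a \emph{single} $0$-cycle $o_X$ working uniformly for all $D$, which is exactly what (*) asserts --- and, secondarily, the normalisation step, whose non-triviality rests on the combination of the Hodge index theorem and the Beauville--Fujiki relation recalled above.
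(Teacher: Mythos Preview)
Your proposal is correct and follows essentially the same approach as the paper: the paper's proof likewise dismisses the forward direction as clear and, for the converse, observes that the ``only if'' argument in Theorem \ref{prodecompws} uses the weak splitting hypothesis only via the top-degree relation (\ref{eqweakslit}), so that (*) already yields the Beauville--Voisin relation on each ${\rm Im}\,\psi_{i,t}$, after which the ``if'' direction of Theorem \ref{prodecompws} applies. Your write-up simply supplies more detail (notably for the ``clear'' direction) than the paper does.
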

\begin{proof} The ``only if'' is clear. In the other direction, examining the proof of Theorem
\ref{prodecompws}, we observe that we only used relations (\ref{eqweakslitaveccomb})  in top degree $2n$
 to conclude that, if (*) holds,  defining
$o_{S_{i,t}}\in{\rm CH}_0(S_{i,t}):=\frac{1}{{\rm deg}\,\phi_t}{\rm pr}_{i*}(\psi_{t*}(\phi_t^*o_{X_t})]\in
 {\rm CH}_0(S_{i,t})$, the zero-cycle
$D_{i,t}^2$ is proportional to $o_{S_{i,t}}$ in  ${\rm CH}_0(S_{i,t})$,
for any $D_t\in{\rm NS}(X_t)^{\perp\Lambda}$,
where $D_{i,t}:=\psi_{i,t}(D_t)$.
Hence by Theorem
\ref{prodecompws}, (*) implies the  weak splitting property  for ${\rm NS}(X_t)^{\perp\Lambda}$.
\end{proof}
\subsection{Curve decompositions}
We can of course introduce decomposition into summands of other dimensions. For example,
we can consider {\it curve decompositions} of any variety $X$ of dimension $n$, given by the
data of generically finite surjective morphisms
\begin{eqnarray}
\label{eqphipsicurve} \phi:\Gamma\rightarrow X,\,\,\psi:\Gamma\rightarrow C_1\times\ldots\times C_n,
\end{eqnarray}
such that for any
$1$-form $\alpha\in H^{1,0}(X)$,
\begin{eqnarray}
\label{eqpourcuirvedec}\phi^*\alpha=\psi^*(\sum_i{\rm pr}_i^*\alpha_i)
\end{eqnarray}
for some forms $\alpha_i\in H^{1,0}(C_i)$.  Jacobians of curves
are clearly curve decomposable. It  might be interesting
to study how restrictive is  this notion.
We have the  obvious analogs of  Proposition \ref{propquadricfujiki}
 and Theorem \ref{coronotdecomp}. Recall that the first Hodge-Riemann relations
 for a skew-symmetric intersection pairing $\omega\in \bigwedge^2H^*$ on a weight $1$ rational Hodge structure
$$H,\,H_\mathbb{C}=H^{1,0}\oplus H^{0,1},\, \,H^{0,1}=\overline{H^{1,0}}$$
say that $\omega_{\mid H^{1,0}}=0$. The weak second Hodge-Riemann relations
say that $\omega(\alpha,\overline{\alpha})\geq 0$ for $\alpha\in H^{1,0}$.
\begin{prop}\label{propinterdecompcurve}  If $X$ admits  a curve  decomposition, there exist a positive rational number
$\lambda$ and
  skew pairings $\omega_i$  satisfying the first and weak second Hodge-Riemann relations
 on $H^1(X,\mathbb{Q})$   such that,
for any $\alpha_1,\ldots,\alpha_{2n}\in H^1(X,\mathbb{Q})$,

\begin{eqnarray}\label{eqintersection3} \int_X \alpha_1\cup
 \ldots\cup\alpha_{2n}=\lambda \sum_{P}\epsilon_P
\prod_{p\in P}\omega_i(\alpha_{p_i},\alpha_{p'_i}) .
\end{eqnarray}
Here $P$ runs through the set of partitions of $\{1,\ldots,2n\}$ into $n$  pairs
$(p_{i},p'_i)$ with $p_i<p'_i$ and $\epsilon_P$ is an adequate sign.
\end{prop}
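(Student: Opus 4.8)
The plan is to transpose, word for word in weight one, the proof of Proposition~\ref{propquadricfujiki}(i); the only genuinely new feature is that a cup-product of $2n$ one-forms expands with a Pfaffian-type combinatorics, together with signs, rather than as a power of a single degree-two class. First I would upgrade the curve decomposition to the level of the full Hodge structure $H^1(X,\mathbb{Q})$, exactly as in formula~(\ref{eqpourpsii}). Applying $\psi_*$ to $\phi^*\alpha=\psi^*(\sum_i\mathrm{pr}_i^*\alpha_i)$, using $\psi_*\psi^*=(\deg\psi)\cdot\mathrm{id}$ together with the K\"unneth splitting $H^1(C_1\times\cdots\times C_n,\mathbb{Q})=\bigoplus_i\mathrm{pr}_i^*H^1(C_i,\mathbb{Q})$ (valid since $H^0(C_j)=\mathbb{Q}$), one obtains morphisms of Hodge structures $\psi_i\colon H^1(X,\mathbb{Q})\to H^1(C_i,\mathbb{Q})$ — concretely, $\psi_i(\alpha)$ is a fixed multiple of the push-forward to $C_i$ of $\psi_*\phi^*\alpha$ cupped with a product of point classes on the other factors (equivalently with a power of an ample class on $\prod_jC_j$) — such that $\phi^*\alpha=\psi^*(\sum_i\mathrm{pr}_i^*\psi_i(\alpha))$ holds in $H^{1,0}(\Gamma)$ for every holomorphic $1$-form $\alpha$. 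Both sides being morphisms of Hodge structures, their locus of coincidence is a sub-Hodge-structure of $H^1(X,\mathbb{Q})$ containing $H^{1,0}(X)$; since a weight-one Hodge substructure whose complexification meets $H^{1,0}$ must also contain the conjugate $H^{0,1}$, hence all of $H^1_{\mathbb{C}}$, the identity $\phi^*\alpha=\psi^*(\sum_i\mathrm{pr}_i^*\psi_i(\alpha))$ extends to all of $H^1(X,\mathbb{Q})$.

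Next I would define $\omega_i$ as the pullback by $\psi_i$ of the intersection pairing $Q_i(\beta,\beta')=\int_{C_i}\beta\cup\beta'$ on $H^1(C_i,\mathbb{Q})$, that is, $\omega_i(\alpha,\alpha')=\int_{C_i}\psi_i(\alpha)\cup\psi_i(\alpha')$. On a curve $Q_i$ is the classical polarization of the weight-one Hodge structure $H^1(C_i)$: it is skew-symmetric and satisfies the first and second Hodge--Riemann relations. Hence $\omega_i$ is a skew pairing on $H^1(X,\mathbb{Q})$ that satisfies the first Hodge--Riemann relation — because $\psi_i(H^{1,0}(X))\subset H^{1,0}(C_i)$ and $H^{2,0}(C_i)=0$, so $\omega_i$ vanishes identically on $H^{1,0}(X)$ — and, because $\psi_i$ may be degenerate, the \emph{weak} second Hodge--Riemann relation (positivity on $H^{1,0}$ up to $\ker\psi_i$, with the customary normalization of sign).

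Then I would run the intersection computation. Put $N=\deg\phi$ and $M=\deg\psi$; both are positive, $\phi$ and $\psi$ being generically finite surjective between $n$-folds. For $\alpha_1,\dots,\alpha_{2n}\in H^1(X,\mathbb{Q})$, pulling back to $\Gamma$, applying the extended decomposition (and the fact that $\psi^*$ is a ring homomorphism), and pushing forward to $C_1\times\cdots\times C_n$ gives
\[
N\int_X\alpha_1\cup\cdots\cup\alpha_{2n}=\int_\Gamma\phi^*\alpha_1\cup\cdots\cup\phi^*\alpha_{2n}=M\int_{C_1\times\cdots\times C_n}\prod_{j=1}^{2n}\Big(\sum_{i=1}^n\mathrm{pr}_i^*\psi_i(\alpha_j)\Big).
\]
Expanding the product by multilinearity over maps $f\colon\{1,\dots,2n\}\to\{1,\dots,n\}$, a term survives integration over $\prod_iC_i$ only when every fibre $f^{-1}(i)$ has exactly two elements, so that the classes landing on each $C_i$ have total degree $2$; such $f$ correspond bijectively to partitions $P$ of $\{1,\dots,2n\}$ into ordered pairs $(p_i<p'_i)$, the $i$-th pair being $f^{-1}(i)$. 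Reordering the odd classes $\mathrm{pr}_i^*\psi_i(\alpha_j)$ to group them by $i$ introduces the sign $\epsilon_P$ of the associated unshuffle permutation — a sign depending on $P$ alone, not on the $\alpha_j$ — and the surviving term equals $\epsilon_P\prod_i\int_{C_i}\psi_i(\alpha_{p_i})\cup\psi_i(\alpha_{p'_i})=\epsilon_P\prod_i\omega_i(\alpha_{p_i},\alpha_{p'_i})$. This yields (\ref{eqintersection3}) with $\lambda=M/N>0$ (note that, in contrast with the surface case, no extra combinatorial factor appears, since the $2n$ factors indexed by $j$ are genuinely distinct).

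The degree bookkeeping and the verification that $\psi_i$ is a morphism of Hodge structures are routine and parallel the surface case treated in Section~\ref{secdecomp}. The step that needs real care — and where the Pfaffian shape of (\ref{eqintersection3}) genuinely differs from the plain product of quadrics in Proposition~\ref{propquadricfujiki} — is the sign analysis in the last step: confirming that the sign produced by commuting the one-forms past one another depends only on the partition $P$, and identifying it with the sign $\epsilon_P$ of the statement.
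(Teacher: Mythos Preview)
Your proof is correct and follows essentially the same route as the paper's: define $\omega_i$ as the pullback of the Poincar\'e pairing on $H^1(C_i,\mathbb{Q})$ via the morphism of Hodge structures $\psi_i$, then compute $N\int_X\alpha_1\cup\cdots\cup\alpha_{2n}=M\int_{\prod C_i}\prod_j\sum_i\mathrm{pr}_i^*\psi_i(\alpha_j)$ and expand, keeping only the terms where each $C_i$ receives exactly two factors; the paper likewise identifies $\epsilon_P$ as the signature of the permutation $\{1,\dots,2n\}\to\{p_1,p'_1,\dots,p_n,p'_n\}$ and records the reformulation $f_X=\frac{M}{N}\,\omega_1\wedge\cdots\wedge\omega_n$ in $\bigwedge^{2n}H^1(X,\mathbb{Q})^*$. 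Your treatment is slightly more explicit than the paper's on two points---the argument that the decomposition extends from $H^{1,0}(X)$ to all of $H^1(X,\mathbb{Q})$ (no transcendental/algebraic split is needed in weight one, since a rational sub-Hodge-structure whose complexification contains $H^{1,0}$ is everything), and the reason the second Hodge--Riemann relation is only \emph{weak} (possible degeneracy of $\psi_i$)---but these are elaborations, not a different strategy.
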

\begin{proof} We have by assumption a decomposition
\begin{eqnarray}\label{eqencoretiregammacurve} \phi^*\alpha_j=\psi^*(\sum_{i=1}^n{\rm pr}_i^*\alpha_{ji}),
\end{eqnarray}
where $\psi,\,\phi$ are as in (\ref{eqphipsicurve}) and $\alpha_{ji}\in H^1(C_i,\mathbb{Q})$.
For each curve $C_i$, we have the Poincar\'e pairing $(\,,\,)_{C_i}$
on $H^1(C_i,\mathbb{Q})$ and as the maps $\alpha\mapsto \alpha_i$ appearing in
(\ref{eqpourcuirvedec}) are  morphisms of Hodge structures, this provides
the desired pairings   $\omega_i(\alpha,\beta):=(\alpha_i,\beta_i)_{C_i}$  satisfying the first and weak second Hodge-Riemann relations on $ H^1(X,\mathbb{Q})$.
Let now $N,\,M$ be the respective  degrees of the maps $\phi,\,\psi$. We deduce from
(\ref{eqencoretiregammacurve}) the following equality:
\begin{eqnarray}\label{eqencoretiregammaproduitcurve} N\int_X \alpha_1\cup \ldots\cup\alpha_{2n}=M\int_{C_1\times\ldots \times C_n}\prod_{j=1}^{2n}\sum_{i=1}^n{\rm pr}_i^*\alpha_{ji}=\\
\nonumber
M\sum_{P=\{p_i<p'_i\}_i}\epsilon_P \prod_{i=1}^n\int_{C_i}\alpha_{p_i,i}\wedge \alpha_{p'_i,i}
=M\sum_P\epsilon_P \prod_{i=1}^n\omega_i(\alpha_{p_i}, \alpha_{p'_i}),
\end{eqnarray}
where $\epsilon_P$ is the signature of the permutation
$\{1,\ldots,2n\}\rightarrow \{p_1,p'_1,\ldots,\,p_n,p'_n\}$.
\end{proof}

A clearer formulation of (\ref{eqintersection3}) is the following:
view the intersection form
$$\alpha_1\wedge\ldots \wedge\alpha_{2n}\mapsto \int_X\alpha_1\cup\ldots\cup\alpha_{2n}$$
on $H^1(X,\mathbb{Q})$ as an element $f_X$ of
$\bigwedge^{2n}H^1(X,\mathbb{Q})^*$, while the intersection pairings $\omega_i$
 belong to $\bigwedge^2H^1(X,\mathbb{Q})^*$.
Then   (\ref{eqintersection3}) simply says that
\begin{eqnarray}
\label{eqclearerformula} f_X=\frac{M}{N}\omega_1\wedge\ldots \wedge \omega_n\,\,{\rm in}\,\,\bigwedge^{2n}H^1(X,\mathbb{Q})^*.
\end{eqnarray}
As a consequence of Proposition \ref{propinterdecompcurve}, we now get the following statement analogous to
Theorem \ref{coronotdecomp}:
\begin{theo} \label{coronotcurvedecomp} Let $g\geq 13$ and let
 $C_1,\,C_2,\,C_3$ be three curves with $h^{1,0}(C_i)=g$ for all $i$, and let
$H={\rm pr}_1^*H_1+{\rm pr}_2^*H_2+{\rm pr}_3^*H_3\in {\rm Pic}\,(C_1\times C_2\times C_3)$ be a very ample divisor on $C_1\times C_2\times C_3$. Let $Y\subset C_1\times C_2\times C_3$
 be a smooth member of $|H|$.
 Then $Y$ is not curve decomposable.
 \end{theo}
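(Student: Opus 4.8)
The plan is to imitate the proof of Theorem~\ref{coronotdecomp}: compute the relevant multilinear intersection form on $Y$, observe that curve decomposability forces it into a very rigid shape, and show that the computed form does not have that shape.

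First I would set up the cohomology. Write $W=C_1\times C_2\times C_3$, $h_i=\mathrm{pr}_i^*c_1(H_i)$, $h=h_1+h_2+h_3=c_1(\mathcal O_W(Y))$, $d_i=\deg H_i>0$. As $Y$ is an ample divisor in the threefold $W$, restriction is an isomorphism of Hodge structures $H^1(Y,\mathbb Q)\cong H^1(W,\mathbb Q)=\bigoplus_iH^1(C_i,\mathbb Q)=:V_1\oplus V_2\oplus V_3=:V$; in particular $H^{1,0}(Y)=\bigoplus_iP_i$ with $P_i=H^{1,0}(C_i)$ of dimension $g$. Computing the cup-product form $f_Y\in\bigwedge^4V^*$ through $\int_Y(\cdot)=\int_W(\cdot)\cup h$ and expanding by Künneth (using $H^3(C_i)=0$, so a holomorphic $1$-form on $C_i$ cupped with $h_i$ vanishes) yields at once
\[
 f_Y=d_1\,\eta_2\wedge\eta_3+d_2\,\eta_3\wedge\eta_1+d_3\,\eta_1\wedge\eta_2 ,
\]
where $\eta_i\in\bigwedge^2V^*$ is the intersection form of $C_i$ (the symplectic form on $V_i$, zero on $V_j\oplus V_k$). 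Since $\dim Y=2$, if $Y$ were curve decomposable then by Proposition~\ref{propinterdecompcurve} (formula~(\ref{eqclearerformula})) there would be skew pairings $\omega_1,\omega_2$ on $H^1(Y,\mathbb Q)$ satisfying the first and weak second Hodge--Riemann relations and a positive rational $c$ with $f_Y=c\,\omega_1\wedge\omega_2$. Everything then reduces to ruling this out, and that — the algebra — is where essentially all the work lies.

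To rule it out I would complexify and feed in positivity. A real skew form $\omega$ obeying the first Hodge--Riemann relation vanishes on $P:=H^{1,0}(Y)$ and (by conjugation) on $\overline P$, so $\omega\in P^*\otimes\overline P^*$; the weak second relation makes the Hermitian form $h_\omega(\alpha)=i\,\omega(\alpha,\overline\alpha)$ on $P$ semidefinite, with a sign uniform over the relevant forms, so we normalize to $h_\omega\ge 0$ and write $h_\omega=\sum_k\lambda_k\otimes\overline{\lambda_k}$ with $(\lambda_k)$ a basis of $L_\omega:=(\ker h_\omega)^\perp\subset P^*$; then $\omega=-i\sum_k\lambda_k\wedge\overline{\lambda_k}$ and, for two such forms, $\omega\wedge\omega'=\sum_{k,l}(\lambda_k\wedge\mu_l)\wedge\overline{(\lambda_k\wedge\mu_l)}$. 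Hence, viewed as a Hermitian form on $\bigwedge^2P$, the $(2,2)$-class $\omega\wedge\omega'$ is semipositive with support the span of $\{\ell\wedge\ell':\ell\in L_\omega,\ \ell'\in L_{\omega'}\}\subset\bigwedge^2P^*$, of dimension $\le(\dim L_\omega)(\dim L_{\omega'})$. Applying this both to $\omega_1,\omega_2$ and to the $\eta_i$ (for which $L_{\eta_i}=P_i^*$, since the intersection form polarizes $V_i$), and using $d_a>0$, the Hermitian form attached to $f_Y$ has support exactly $\Delta:=\bigoplus_{i<j}P_i^*\wedge P_j^*$, of dimension $3g^2$; comparing, $\operatorname{span}\{\ell\wedge\ell':\ell\in L_{\omega_1},\ \ell'\in L_{\omega_2}\}=\Delta$. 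With $\rho_m=\dim L_{\omega_m}$ and $\pi_i\colon P^*\to P_i^*$ the projections, inclusion into $\Delta$ forces $\pi_i(\ell)\wedge\pi_i(\ell')=0$ for all $\ell\in L_{\omega_1},\ell'\in L_{\omega_2}$, so for each $i$ one of $\pi_i(L_{\omega_1}),\pi_i(L_{\omega_2})$ is $0$ or they are equal lines, whence $\dim\pi_i(L_{\omega_1})+\dim\pi_i(L_{\omega_2})\le g$; summing over $i$ and using $L_{\omega_m}\subset\bigoplus_i\pi_i(L_{\omega_m})$ gives $\rho_1+\rho_2\le 3g$, and then $3g^2=\dim\Delta\le\rho_1\rho_2\le(\tfrac{\rho_1+\rho_2}{2})^2\le\tfrac94 g^2$, a contradiction.

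The main obstacle is precisely this last reduction and its proof. At the purely multilinear level the statement "$f_Y$ is not a constant times $\omega_1\wedge\omega_2$" is \emph{false} for small $g$ (already for $g=1$, $f_Y$ is an honest product over $\mathbb C$), so the Hodge--Riemann positivity — the fact that in a curve decomposition the $\omega_i$ are pullbacks of polarizations — must be used in an essential way; getting the support bookkeeping exactly right is the delicate point. Alternatively, if one insisted on a positivity-free argument in the style of Theorem~\ref{coronotdecomp}, one would compare the nilpotency index $\lfloor 3g/2\rfloor$ of $f_Y$ (immediate from $\eta_i^{\wedge(g+1)}=0$) with that of $\omega_1\wedge\omega_2=\omega_1^{\wedge\bullet}\wedge\omega_2^{\wedge\bullet}$ under the constraints $f_Y|_{V_i}=0$ and $f_Y|_{V_i\oplus V_j}=d_k\,\eta_i\wedge\eta_j$ being nondegenerate; that is the route on which a lower bound on $g$ such as $13$ is naturally forced, and making it go through would be the hard part.
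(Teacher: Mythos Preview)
Your argument is correct, and it takes a genuinely different route from the paper's. The paper does \emph{not} use the weak second Hodge--Riemann relations at all: it proves the stronger, purely linear-algebraic Lemma~\ref{lepourcurvedec}, namely that for $g\geq 13$ the form $\lambda_1\omega_1\wedge\omega_2+\lambda_2\omega_2\wedge\omega_3+\lambda_3\omega_1\wedge\omega_3$ is never a product $\omega'\wedge\omega''$ for \emph{arbitrary} $\omega',\omega''\in\bigwedge^2V^*$. The method is to restrict to $K=W_1\oplus W_2\oplus V_3$ with $W_i\subset V_i$ generic Lagrangian, observe that $f_Y|_K=0$, and then invoke a small lemma (Lemma~\ref{lemmeutilesurproduit}) to force both $\omega'|_K,\omega''|_K$ to have rank $\leq 4$; a rank count on the contraction maps $V_3\to V_1^*\oplus V_2^*\oplus V_3^*$ then shows that $\omega'$ and $\omega''$ share a nontrivial kernel inside $V_3$ as soon as $2g>24$, contradicting the nondegeneracy of $f_Y$. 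This is where the bound $g\geq 13$ comes from.

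Your approach, by contrast, exploits the extra information supplied by Proposition~\ref{propinterdecompcurve}: the $\omega_m$ are real $(1,1)$-forms with semidefinite associated Hermitian form. Passing to the Hermitian form on $\bigwedge^2P$ and comparing supports is a clean idea, and your support bookkeeping is right: the equality of Hermitian forms forces $\mathrm{span}\{\ell\wedge\ell'\}=\Delta$, the inclusion in $\Delta$ kills the diagonal components $\pi_i(\ell)\wedge\pi_i(\ell')$, and the resulting dichotomy on $\pi_i(L_{\omega_1}),\pi_i(L_{\omega_2})$ gives $\rho_1+\rho_2\leq 3g$ for $g\geq 2$ (the ``equal line'' case contributes $2\leq g$), whence $3g^2\leq \rho_1\rho_2\leq \tfrac{9}{4}g^2$. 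So you actually prove the theorem for all $g\geq 2$, improving on the paper's bound. What you lose relative to the paper is the field-independent statement of Lemma~\ref{lepourcurvedec}: your argument genuinely needs positivity, as you correctly note, and says nothing about arbitrary $\omega',\omega''$. The alternative ``positivity-free'' route you sketch at the end is, up to packaging, the paper's own argument.
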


 \begin{proof} By Lefschetz hyperplane section theorem, we get, as ${\rm dim }\,Y=2$:
$$H^1(Y,\mathbb{Z})=H^1(C_1,\mathbb{Z})\oplus H^1(C_2,\mathbb{Z})\oplus  H^1(C_3,\mathbb{Z}).$$

 We now compute $\int_Y\alpha_1\cup\alpha_2\cup \alpha_3\cup \alpha_4$ for $\alpha_i\in
 H^1(Y,\mathbb{Q})$.
 Writing $\alpha_i={\rm pr}_1^*\beta_{i1}+{\rm pr}_2^*\beta_{i2}+{\rm pr}_3^*\beta_{i3}$,  and denoting
 $h_i=c_1(H_i)\in H^2(C_i,\mathbb{Q})$, we get

 $$\int_Y\alpha_1\cup\alpha_2\cup \alpha_3\cup \alpha_4=\int_{C_1\times C_2\times C_3}
 \cup_{i=1}^{i=4}({\rm pr}_1^*\beta_{i1}+{\rm pr}_2^*\beta_{i2}+{\rm pr}_3^*\beta_{i3})\cup({\rm pr}_1^*h_1+{\rm pr}_2^*h_2+{\rm pr}_3^*h_3)$$

 \begin{eqnarray}
\label{eqavecnumcurve}=\lambda_1\sum_{i<j}\omega_{2}(\alpha_i,\alpha_j)
\omega_{3}(\alpha_k,\alpha_l)+\lambda_2\sum_{i<j}\omega_1(\alpha_i,\alpha_j)
\omega_3(\alpha_k,\alpha_l)+\lambda_3\sum_{i<j}\omega_1(\alpha_i,\alpha_j)\omega_2(\alpha_k,\alpha_l),
\end{eqnarray}

 where
  \begin{enumerate}
  \item the constants $\lambda_i$ are nonzero rational numbers,
   \item in each term, $k<l$ and $\{1,2,3,4 \}=\{i,j,k,l\}$,
    \item  for $l=1,\,2,\,3$, $\omega_l(\alpha_i,\alpha_j):=\int_{C_l}\beta_{il}\cup\beta_{jl}$.
\end{enumerate}
Said differently, we have
\begin{eqnarray}
\label{eqdecrivantintY}f_Y=\lambda_1\omega_1\wedge\omega_2+\lambda_2\omega_2\wedge\omega_3+
\lambda_3\omega_1\wedge\omega_3
\,\,{\rm in}\,\,\bigwedge^4H^1(Y,\mathbb{Q})^*.
\end{eqnarray}
If $Y$ was curve decomposable, then,
by Proposition \ref{propinterdecompcurve}, using the reformulation (\ref{eqdecrivantintY}), there would exist
$\omega',\,\omega''\in \bigwedge^2H^1(Y,\mathbb{Q})^*$ such that
\begin{eqnarray}
\label{eqpourprouduitsiexiste} f_Y=\omega'\wedge \omega'' \,\,{\rm in}\,\,\bigwedge^4H^1(Y,\mathbb{Q})^*.
\end{eqnarray}

Theorem  \ref{coronotcurvedecomp}  thus follows from the following lemma.
\begin{lemm}\label{lepourcurvedec} Let $g\geq 13$, and let $V_i$, $i=1,\,2,\,3$ be  three spaces of dimension $2g$ equipped with a nondegenerate skew pairing
$\omega_i\in \bigwedge^2V_i^*$. Let $V:=V_1\oplus V_2\oplus V_3$. We see
$\omega_i$ as elements of $\bigwedge^2V^*$. Then there do not exist $\omega,\,\omega'\in \bigwedge^2V^*$ such that
\begin{eqnarray}
\label{eqpourprouduitabstrait}
\omega_1\wedge\omega_2+\omega_2\wedge\omega_3+\omega_1\wedge\omega_3=\omega'\wedge \omega'' \,\,{\rm in}\,\,\bigwedge^4V^*.
\end{eqnarray}
\end{lemm}
We apply indeed Lemma \ref{lepourcurvedec} to $V_i:=H^1(C_i,\mathbb{Q}),\,\omega_i=(\,,\,)_{C_i}$. Using
(\ref{eqdecrivantintY}), Lemma \ref{lepourcurvedec} says that
 (\ref{eqpourprouduitsiexiste}) cannot hold.
\end{proof}
\begin{proof}[Proof of Lemma \ref{lepourcurvedec}] Let $W_1\subset V_1,\,W_2\subset V_2$ be generic Lagrangian subspaces with respect to
$\omega_1$, resp. $\omega_2$. Let
$K:=W_1\oplus W_2\oplus V_3$. Then $\omega_1\wedge\omega_2+\omega_2\wedge\omega_3+\omega_1\wedge\omega_3$ vanishes on $K$. If (\ref{eqpourprouduitabstrait}) holds, then
$\omega'\wedge\omega''$ vanishes on $K$.

We now have the following lemma.
\begin{lemm}\label{lemmeutilesurproduit} Let $V$ be a vector space and let
 $\omega,\,\omega'\in \bigwedge^2V^*$ such that $\omega\not=0,\,\omega'\not=0$
 and  $\omega\wedge\omega'=0$ in $ \bigwedge^4V^*$. Then there exists a quotient
 $V\rightarrow V'$ with ${\rm dim}\,V'\leq 4$  such that both
 $\omega$ and $\omega'$ are pulled back from $2$-forms on $V'$.
\end{lemm}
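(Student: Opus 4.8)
The plan is to reduce the statement to a normal-form computation together with one application of the hard Lefschetz theorem for the exterior algebra of a symplectic vector space. First, it is enough to exhibit a subspace $U\subset V^*$ with $\dim U\le 4$ and $\omega,\omega'\in\bigwedge^2U$: one then takes $V':=V/U^{\perp}$, so that $(V')^*=U$ and, under the quotient map $V\to V'$, the forms $\omega,\omega'$ are pulled back from themselves viewed inside $\bigwedge^2(V')^*=\bigwedge^2U$. Since $\omega,\omega'\neq0$ their ranks are even and positive, and I may assume $\mathrm{rk}\,\omega\le\mathrm{rk}\,\omega'$. I would fix a basis $\xi_1,\dots,\xi_n$ of $V^*$ with $\omega=\sum_{k=1}^{r}\xi_{2k-1}\wedge\xi_{2k}$, $2r=\mathrm{rk}\,\omega$, put $A=\{1,\dots,2r\}$, $B=\{2r+1,\dots,n\}$, and use the bigrading on $\bigwedge^\bullet V^*$ by the number of $A$- and $B$-indices. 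Writing $\omega'=\omega'_{AA}+\omega'_{AB}+\omega'_{BB}$ accordingly, the hypothesis $\omega\wedge\omega'=0$ splits into its three bihomogeneous parts $\omega\wedge\omega'_{AA}=0$, $\omega\wedge\omega'_{AB}=0$, $\omega\wedge\omega'_{BB}=0$.

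The part of bidegree $(2,2)$ gives $\omega'_{BB}=0$, because $\omega\neq 0$ and that component is $\omega\otimes\omega'_{BB}$ under $\bigwedge^2\langle\xi_A\rangle\otimes\bigwedge^2\langle\xi_B\rangle\hookrightarrow\bigwedge^4V^*$. Writing $\omega'_{AB}=\sum_{j\in B}\alpha_j\wedge\xi_j$ with $\alpha_j\in\langle\xi_A\rangle$, the part of bidegree $(3,1)$ reads $\sum_{j\in B}(\omega\wedge\alpha_j)\wedge\xi_j=0$, hence (the $\xi_j$, $j\in B$, being part of a basis) $\omega\wedge\alpha_j=0$ in $\bigwedge^3\langle\xi_A\rangle$ for each $j$. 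If $\mathrm{rk}\,\omega=2$ this last condition is vacuous, but then each $\alpha_j$ lies in the two-dimensional space $\langle\xi_1,\xi_2\rangle$, so the sum collapses: $\omega'_{AB}=\xi_1\wedge\mu_1+\xi_2\wedge\mu_2$ for suitable $\mu_1,\mu_2\in V^*$; together with $\omega'_{AA}\in\langle\xi_1\wedge\xi_2\rangle$ this puts $\omega'$ (and trivially $\omega$) inside $\bigwedge^2U$ with $U=\langle\xi_1,\xi_2,\mu_1,\mu_2\rangle$, $\dim U\le 4$, and we are done.

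If instead $\mathrm{rk}\,\omega\ge 4$, then on the $2r$-dimensional symplectic space $\langle\xi_A\rangle$ the Lefschetz operator $L_\omega=\omega\wedge(-)$ is injective from $\bigwedge^1$ to $\bigwedge^3$ (the case $k\le r-1$ of hard Lefschetz, applicable since $r\ge 2$), so $\alpha_j=0$ for all $j$ and $\omega'=\omega'_{AA}\in\bigwedge^2\langle\xi_A\rangle$. If $2r=4$ take $U=\langle\xi_A\rangle$, of dimension $4$. If $2r\ge 6$, the relation $\omega\wedge\omega'_{AA}=0$ says $\omega'_{AA}\in\ker\bigl(L_\omega\colon\bigwedge^2\langle\xi_A\rangle\to\bigwedge^4\langle\xi_A\rangle\bigr)$, and this map is again injective by hard Lefschetz ($k=2\le r-1$, since $r\ge 3$); hence $\omega'=0$, contradicting $\omega'\neq 0$. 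So the case $2r\ge 6$ cannot occur, and in all cases we have found $U$ with $\dim U\le 4$ and $\omega,\omega'\in\bigwedge^2U$, proving the lemma. The one non-formal ingredient is the injectivity of $L_\omega$ on $\bigwedge^k$ of a $2r$-dimensional symplectic vector space for $k\le r-1$ — used here with $k=1$ (needing $2r\ge 4$) and with $k=2$ (needing $2r\ge 6$) — which is the $\mathfrak{sl}_2$-representation form of the Lefschetz theorem on the exterior algebra; this is the step I would be most careful to pin down, everything else being bookkeeping with the $A/B$-bigrading and the collapsing trick that trivializes the rank-$2$ case.
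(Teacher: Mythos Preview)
Your proof is correct and follows essentially the same strategy as the paper's: put $\omega$ in normal form, split $\omega'$ according to the span of $\omega$ versus a complement, and use Lefschetz-type injectivity of $\omega\wedge(-)$ to kill the unwanted pieces. The organization differs slightly---the paper first bounds both ranks by $4$ (via the isomorphism $\omega\wedge:\bigwedge^2\to\bigwedge^4$ on a $6$-dimensional symplectic space) and then treats the rank-$2$ and rank-$4$ cases, whereas you case-split directly on $\mathrm{rk}\,\omega$ and recover the rank bound as the contradiction in the $2r\ge 6$ case; your rank-$2$ ``collapsing trick'' is also more explicit than the paper's one-line dismissal---but the underlying argument is the same.
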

\begin{proof} This follows from the fact that for a  $2$-form $\omega$ of rank $6$ on
a $6$-dimensional vector space $V$, the wedge product map
$\omega\wedge: \bigwedge^2{V}^*\rightarrow \bigwedge^4{V}^*$ is an isomorphism.
This fact already implies  that if $\omega\wedge\omega'=0$ in $ \bigwedge^4V^*$, with
 $\omega\not=0,\,\omega'\not=0$, the rank of
$\omega$ is at most $4$ and similarly for $\omega'$. If both forms $\omega$ and $\omega'$
are of rank $2$, the conclusion
of the lemma holds. If $\omega=e_1^*\wedge e_2^*+e_3^*\wedge e_4^*$ is of rank $4$, let
$V'=\langle e_1^*,\ldots,\,e_4^*\rangle$. Choosing a decomposition $V^*={V'}^*\oplus W^*$, we can write
$\omega'=\alpha+\beta+\gamma$ with $$\alpha\in \bigwedge^2{V'}^*,\,\,
\beta\in {V'}^*\otimes W^*,\,\,\gamma\in\bigwedge^2W^*,$$
and we must have $\omega\wedge\beta=0,\,\omega\wedge \gamma=0$, which clearly implies
that $\beta=0$ and $\gamma=0$ because $\omega$ has rank $4$  so $\omega\wedge $ is injective on
${V'}^*$. Thus
$\omega'$  belongs to $\bigwedge^2{V'}^*$.
\end{proof}
Coming back to our situation, Lemma
\ref{lemmeutilesurproduit} shows that are  two possibilities:
either (1) one of  $\omega'$, $\omega''$ vanishes on $K$, or
\begin{eqnarray}\label{eqranks}
{\rm rank}\,\omega'_{\mid K}\leq 4,\,\,{\rm rank}\,\omega''_{\mid K}\leq 4.
\end{eqnarray}
In the situation (1), $\omega'$ vanishes on $K$ for a general choice of $W_1,\,W_2$, and this easily implies that
$V_3$ is contained in the kernel of $\omega'$. But then neither $V_1$, nor $V_2$ can be contained in
the kernel of $\omega'$, since otherwise $\omega'\wedge \omega''$ would vanish on $V_1\oplus V_3$ or
$V_2\oplus V_3$, and this is not the case.
Similarly $\omega''$ can have at most one of $V_1,\,V_2,\,V_3$ contained in its kernel. Hence permuting the $V_i$'s if necessary,
we can assume that $V_3$ is not contained in the kernel of $\omega'$ nor $\omega''$, hence that
neither $\omega'$ nor $\omega''$ vanishes on $K$.

We thus excluded (1) and can assume that (\ref{eqranks}) holds.
Consider the map
$$\alpha'_1: V_3\rightarrow V_1^*,\, \alpha'_2: V_3\rightarrow V_2^*$$
obtained by composing
$\lrcorner\omega': V_3\rightarrow V_1^*\oplus V_2^*\oplus V_3^*$
with the projection to $V_1^*$, resp. $V_2^*$. For $i=1,\,2$, let $ K^*_{i,\omega'}\subset V_i^*$ be its image. One easily checks that
the restriction map
$$K^*_{i,\omega'}\rightarrow W_i^*$$
has maximal rank for generic choices of Lagrangian spaces  $W_i$, that is,
it is either injective or surjective. As it has rank
$\leq 4$ by (\ref{eqranks}), and $g\geq 13$, one concludes that ${\rm dim}\,K_{i,\omega'}^*\leq 4$.
It follows that
the map $\alpha'_1+\alpha'_2:V_3\rightarrow V_1^*\oplus V_2^*$  has rank $\leq 8$
 and as $\omega'_{\mid V_3}$
has rank $\leq 4$, one concludes
that the rank of
the map
$\lrcorner\omega'_{\mid V_3}: V_3\rightarrow V_1^*\oplus V_2^*\oplus V_3^*$
 is at most $12$. We argue similarly for
$\lrcorner\omega''_{\mid V_3}: V_3\rightarrow V_1^*\oplus V_2^*\oplus V_3^*$
As ${\rm dim}\,V_3=2g\geq 26$, we conclude that
$${\rm Ker}\,\lrcorner\omega'_{\mid V_3}\cap {\rm Ker}\,\lrcorner\omega''_{\mid V_3}\not=\{0\}.$$
But this  contradicts formula (\ref{eqpourprouduitabstrait}). Indeed, this implies that the right hand side has a nontrivial kernel, that is, is pulled-back from some proper quotient of $V_1\oplus V_2\oplus V_3$, while formula on the left is an alternating
$4$-form with no nontrivial kernel.
\end{proof}

\subsection{Decomposition from families of algebraically coisotropic divisors \label{secdecdiv}}
We study in this section a weaker notion of decomposition for a holomorphic $2$-form
 into forms of smaller rank  (see Question \ref{question01}). The following is a weak converse to
 Proposition \ref{propeasysenseintro}.

\begin{prop}\label{theodecompdiv}  Let $X$ be smooth projective variety of dimension $2n$ equiped with a holomorphic $2$-form
$\sigma_X$.
Assume $X$ is swept-out by (possibly singular) algebraically coisotropic divisors. Then there exists a generically
finite cover
$\Phi: \mathcal{D}'\rightarrow X$
such that
\begin{eqnarray}\Phi^*\sigma_X=\eta_1+\eta_2\,\,{\rm in }\,\,H^{2,0}(\mathcal{D}'),
\end{eqnarray}
where ${\rm rank}\,\eta_1\leq2$,  and  ${\rm rank}\,\eta_2\leq 2n-2$. More precisely,
$\eta_2$ is the  pull-back of a holomorphic $2$-form on
a  variety of dimension $\leq 2n-1$.
\end{prop}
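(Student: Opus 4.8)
The plan is to realise $\mathcal{D}'$ as the universal family of the \emph{null‑leaves} of the given coisotropic divisors, and then to build $\eta_2$ by globalising, through Hodge theory, the holomorphic $2$‑forms living on the leaf‑quotients. \emph{Step 1 (geometric set‑up).} By hypothesis there is a family $\{D_b\}_{b\in\widetilde B}$ of algebraically coisotropic divisors with dense union; cutting $\widetilde B$ by general hyperplane sections I may assume $\dim B=1$ while keeping the general $D_b$ algebraically coisotropic and the total space of the family dominating $X$. For general $b$ one has $\phi_b:D_b\dashrightarrow W_b$ with $\dim W_b=2n-2$ and $\sigma_{X|D_{b,\mathrm{reg}}}=\phi_b^*\sigma_{W_b}$, the fibres of $\phi_b$ being the algebraic null‑leaves of $\sigma_{X|D_b}$. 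Letting these leaf curves vary with $b$, they form an algebraic family of curves in $X$ parametrised by a variety $\mathcal{W}$ lying over $B$ with fibre $W_b$, so $\dim\mathcal{W}\le 2n-1$; let $\mathcal{C}\to\mathcal{W}$ be the universal curve. Through a general point of $X$ there pass only finitely many such leaves (one inside each of the finitely many $D_b$ containing the point), so $\mathcal{C}\to X$ is generically finite and $\dim\mathcal{C}=2n$. I take a resolution $\mathcal{D}'$ of $\mathcal{C}$ on which $\Phi:\mathcal{D}'\to X$ and $f:\mathcal{D}'\to\bar{\mathcal{W}}$ (a smooth projective model of $\mathcal{W}$) are morphisms, shrinking $B$ to a smooth curve $B^\circ$ over which $\mathcal{D}'\to B$ and $\bar{\mathcal{W}}\to B$ are smooth projective, with $f$ restricting on a general fibre $\mathcal{D}'_b$ to a model of $\phi_b$.

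\emph{Step 2 (reduction to a gluing statement).} Write $\omega:=\Phi^*\sigma_X\in H^0(\mathcal{D}',\Omega^2_{\mathcal{D}'})$; it is closed. It suffices to produce a holomorphic $2$‑form $\tau\in H^0(\bar{\mathcal{W}},\Omega^2_{\bar{\mathcal{W}}})$ with $\tau_{|W_b}=\sigma_{W_b}$ for general $b$. Granting this, set $\eta_2:=f^*\tau$ and $\eta_1:=\omega-\eta_2$. On a general fibre one has $\omega_{|\mathcal{D}'_b}=\phi_b^*\sigma_{W_b}=f_{|\mathcal{D}'_b}^*(\tau_{|W_b})=\eta_{2|\mathcal{D}'_b}$, so $\eta_{1|\mathcal{D}'_b}=0$; at a general point $z$ of a general fibre, $T_z\mathcal{D}'_b$ has codimension $1$ in $T_z\mathcal{D}'$, hence $\eta_1$ has rank $\le2$ there, so the holomorphic $4$‑form $\eta_1\wedge\eta_1$ vanishes on a dense open and therefore identically, giving $\mathrm{rank}\,\eta_1\le2$ on all of $\mathcal{D}'$. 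On the other hand $\eta_2=f^*\tau$ is pulled back from $\bar{\mathcal{W}}$, of dimension $\le 2n-1$; in particular it kills the positive‑dimensional fibres of $f$, so $\mathrm{rank}\,\eta_2\le 2n-2$. This is exactly the asserted decomposition.

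\emph{Step 3 (the gluing — the crux).} Here closedness of $\omega$ is everything. Since $\omega$ is closed, $[\omega]\in H^2(\mathcal{D}',\mathbb{C})$ is a fixed class, so $b\mapsto[\omega_{|\mathcal{D}'_b}]=[\phi_b^*\sigma_{W_b}]$ is a flat section of the local system over $B^\circ$ with fibre $H^2(\mathcal{D}'_b,\mathbb{C})$. As the $\phi_b$ are the fibres of the algebraic morphism $f$, the pull‑backs $\phi_b^*$ assemble into an injective morphism of variations of Hodge structure with flat image, so $b\mapsto[\sigma_{W_b}]$ is flat, i.e. monodromy‑invariant in $H^2(W_b,\mathbb{C})$. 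By Deligne's global invariant cycle theorem applied to $\bar{\mathcal{W}}\to B$, the restriction $H^2(\bar{\mathcal{W}},\mathbb{C})\to H^2(W_b,\mathbb{C})^{\mathrm{inv}}$ is surjective; being a morphism of pure Hodge structures it is strict for the Hodge filtration, and since $[\sigma_{W_b}]\in F^2H^2(W_b)$ it lifts to $F^2H^2(\bar{\mathcal{W}})=H^0(\bar{\mathcal{W}},\Omega^2_{\bar{\mathcal{W}}})$. Any lift $\tau$ satisfies $[\tau_{|W_b}]=[\sigma_{W_b}]$, hence $\tau_{|W_b}=\sigma_{W_b}$ because a holomorphic $2$‑form on a smooth projective variety is determined by its cohomology class. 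This produces $\tau$ and completes the argument.

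\emph{Where the difficulty lies.} Nothing formal forces a relative holomorphic $2$‑form on $\bar{\mathcal{W}}\to B$ to come from an absolute one: the obstruction sits in $H^1(\bar{\mathcal{W}},q^*\Omega_B\otimes\Omega_{\bar{\mathcal{W}}/B})$, and the only input that makes it vanish is that $\omega=\Phi^*\sigma_X$ is closed on the \emph{larger} space $\mathcal{D}'$ — this is what makes the fibrewise classes $[\sigma_{W_b}]$ flat and allows the invariant cycle theorem plus strictness to promote them to an honest $(2,0)$‑form. Everything else (reducing to $\dim B=1$, constructing $\mathcal{W}$ and the cover $\mathcal{D}'$, and the two rank bounds) is routine; the case $n=1$ is trivial since then $W_b$ is a point.
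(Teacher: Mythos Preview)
Your proof is correct and follows essentially the same route as the paper: reduce to a $1$-parameter family of coisotropic divisors, assemble the leaf quotients $W_b$ into a $(2n-1)$-dimensional total space $\bar{\mathcal W}$ (the paper's $B$), observe that the classes $[\sigma_{W_b}]$ are locally constant because they are detected by the restriction of the global class $[\Phi^*\sigma_X]$, and then invoke Deligne's global invariant cycle theorem (with strictness for the Hodge filtration) to glue the $\sigma_{W_b}$ to a global $(2,0)$-form $\tau$ on $\bar{\mathcal W}$, after which $\eta_2:=f^*\tau$ and $\eta_1:=\Phi^*\sigma_X-\eta_2$ do the job. Your write-up is in fact slightly more explicit than the paper's on two points: the reason a monodromy-invariant class of type $(2,0)$ lifts to a global $(2,0)$-form (strictness), and the mechanism behind the rank bound $\mathrm{rank}\,\eta_1\le 2$ (vanishing on a moving codimension-$1$ subspace forces $\eta_1\wedge\eta_1=0$).
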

Here by the rank we mean the generic rank of the considered forms.
\begin{proof}[Proof of Proposition  \ref{theodecompdiv}] By assumption, there exists a
$1$-parameter family
$$\mathcal{D}\rightarrow C,\,\,\mathcal{D}\rightarrow X$$ of  divisors
$D_t\subset X$ whose  characteristic foliation (on the regular locus of $D_t$)
 is algebraically integrable, that is, there exists a rational map
 $$\phi_t:D_t\dashrightarrow B_t$$
 with ${\rm dim}\,B_t=2n-2$, such that the equality
 $\sigma_{X\mid D_t}=\phi_t^*\sigma_{B_t}$,  for some holomorphic $2$-form $\sigma_{B_t}$
 on the regular locus
 of $B_t$, holds
 on the regular locus of $D_t$.
 Note that, by desingularisation, we can assume $D_t$ and  $B_t$ smooth, at least for general $t$.
  Indeed, the $2$-form
   $\sigma_{B_t}$
 extends  holomorphically  on any smooth projective  model $\widetilde{B}_t$ of $B_t$, because
  it can be constructed as
 $$\tilde{\phi}_{t*}(\tilde{j}_t^*\sigma_{X}\wedge \omega)$$
 where
 $\tilde{j}_t:\widetilde{D}_t\rightarrow X$ is a smooth model  of $D_t$
 such that $\tilde{\phi}_{t}:\widetilde{D}_t\rightarrow\widetilde{B}_t$ is a  morphism, and $\omega$ is a closed $(1,1)$-form
 on $\widetilde{D}_t$ whose integral over the fibers of $\tilde{\phi}_{t}$ is $1$.

As usual, the data above (namely the family of varieties $B_t$ and morphisms
 $\phi_t$) can be put in family, possibly after base change
 from the original family
 $\mathcal{D}\rightarrow C$ of divisors on $X$ and birational transformations. We thus get
 the following diagram

 \begin{eqnarray}
  \xymatrix{
&\mathcal{D}'\ar[r]^{J}\ar[d]^{\Phi}&X\\
&B&}
\end{eqnarray}
where all the varieties are smooth and projective, the morphism $J$ is surjective generically
finite and
$B$ admits a morphism
$f:B\rightarrow C$ such that, considering  the induced
diagram of fibers over a general point  $t\in C$
 \begin{eqnarray}\label{numerodiagt}
  \xymatrix{
&\mathcal{D}'_t\ar[r]^{J_t}\ar[d]^{\Phi_t}&X\\
&B_t&},
\end{eqnarray}
one has
\begin{eqnarray}\label{eqrestsigmaBJphi}J_t^*\sigma_X=\Phi_t^*\sigma_{B_t}\,\,{\rm in}\,\,H^{2,0}(B_t).
\end{eqnarray}
We deduce from this last equality that
the forms $\sigma_{B_t},\,t\in C$, form a locally constant section of
the bundle $\mathcal{H}^{2,0}\subset R^2f_*\mathbb{C}\otimes \mathcal{O}_C$
on the open set of $C$ of regular values of $f$.
By the global invariant cycles theorem
\cite{deligne}, \cite[4.3.3]{voisinbook}, there exists a holomorphic $2$-form
$\sigma_B\in H^{2,0}(B)$ such that
\begin{eqnarray}\label{eqrestsigmaB}\sigma_{B\mid B_t}=\sigma_{B_t}.
\end{eqnarray}
We conclude from (\ref{eqrestsigmaBJphi}) and (\ref{eqrestsigmaB}) that
the $2$-form $\Phi^*\sigma_B-J^*\sigma_X$ vanishes on the divisors
$\mathcal{D}'_t=\Phi^{-1}(B_t)$ which cover $\mathcal{D}'$. This form thus has rank $\leq 2$ on
 $\mathcal{D}'$.
\end{proof}
This statement rises the following question.

\begin{question}\label{qdiv} Is any projective  hyper-K\"ahler manifold  swept out by
algebraically coisotropic divisors?

\end{question}
 The following question was asked by G. Pacienza.
 \begin{question}\label{qpac} Is any projective hyper-K\"ahler manifold
  swept out by elliptic curves?
  \end{question}
  The following proposition relates Question \ref{qdiv} and Question
  \ref{qpac}.
  \begin{prop} If a very general polarized hyper-K\"ahler manifold with $b_2\geq 5$
   is swept out by elliptic curves, then it is swept out by
algebraically coisotropic divisors.
  \end{prop}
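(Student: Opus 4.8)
The plan is to construct the algebraically coisotropic divisors directly from the covering family of elliptic curves. Write $\dim X=2n$, and let $\pi\colon\mathcal E\to B$, $q\colon\mathcal E\to X$ be a covering family of elliptic curves; after replacing $B$ by a suitable modification and desingularising I may assume $B$ and $\mathcal E$ smooth projective, $\dim B=2n-1$, $q$ surjective and generically finite, and the general fibre $E_b$ a smooth genus-$1$ curve mapped birationally onto its image in $X$. The deformations of $E:=E_b$ inside $X$ induced by $T_bB$ form a subspace of $H^0(E,N_{E/X})$, which I denote $v\mapsto s_v$.

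The key object is an ``obstruction form'' on $B$. For general $b$ the symplectic form induces on $E$ a natural exact sequence $0\to K_E\to N_{E/X}\xrightarrow{\beta_E}\omega_E\to 0$, where $K_E=T_E^{\perp\sigma_X}/T_E$ carries the symplectic form induced by $\sigma_X$ and $\beta_E(\xi)=\sigma_X(\widetilde\xi,\cdot)_{|T_E}$ (well defined because $T_E$ is $\sigma_X$-isotropic). A first-order deformation $v\in T_bB$ keeps the moving curve tangent to the distribution $T_E^{\perp\sigma_X}$ precisely when $s_v\in H^0(E,K_E)$, i.e. when $\beta_E(s_v)=0$; the assignment $v\mapsto \beta_E(s_v)$ therefore assembles into a (line-bundle-valued) $1$-form $\theta$ on $B$. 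Moreover $\theta\not\equiv 0$: if it vanished identically then along a general $E_b$ we would have $dq(T_{\mathcal E})\subset T_{E_b}^{\perp\sigma_X}\subsetneq T_X$, contradicting the surjectivity of $q$ (recall $\dim\mathcal E=\dim X$).

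The crux of the argument is that the corank-$1$ distribution $\mathcal G:=\ker\theta\subset T_B$ is \emph{algebraically integrable}. I would prove this by identifying $\mathcal G$ with the kernel of the differential of the $j$-invariant (period) map $j\colon B\to\mathbb P^1$ of the elliptic fibration $\pi$: a deformation $v$ of $E_b$ lies in $\mathcal G$ iff $s_v\in H^0(K_E)$ iff the Kodaira--Spencer image of $s_v$ in $H^1(E_b,T_{E_b})$ vanishes iff $dj_b(v)=0$, so the leaves of $\mathcal G$ are the fibres of the morphism $j$. The hypotheses enter exactly here: the very generality of $X$ (so that $\mathrm{NS}(X)=\mathbb Z$ and, using $b_2\geq 5$, the Mumford--Tate group of $H^2(X,\mathbb Q)_{tr}$ is the full orthogonal group) is what rules out the degenerate possibilities — isotriviality of $\pi$, which by the previous paragraph is incompatible with $\theta\not\equiv 0$, and non-integrability of $\mathcal G$ — and what is needed to control $H^0(E_b,T_X{}_{|E_b})$ in the deformation-theoretic comparison of $\theta$ with $dj$. \textbf{This step is the main obstacle.}

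Granting it, for general $t\in\mathbb P^1$ set $B_t:=\overline{j^{-1}(t)}$, of dimension $2n-2$, and $D_t:=q\bigl(\pi^{-1}(B_t)\bigr)$, which is a divisor since $q$ is generically finite. At a general point $x\in D_t$ lying on $E_{b_0}$ with $b_0\in B_t$, we have $T_{b_0}B_t=\ker\theta_{b_0}$, so all the sections $s_v$ with $v\in T_{b_0}B_t$ lie in $K_{E_{b_0}}$; hence $T_{D_t,x}=T_{E_{b_0},x}+\mathrm{ev}_x(T_{b_0}B_t)\subset T_{E_{b_0},x}^{\perp\sigma_X}$, and as both spaces have dimension $2n-1$ they coincide, giving $T_{D_t,x}^{\perp\sigma_X}=T_{E_{b_0},x}$. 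Thus the characteristic foliation of $D_t$ has the algebraic curves $q(E_b)$, $b\in B_t$, as its leaves, so $D_t$ is algebraically coisotropic in the sense of the Definition, with $\phi_t\colon D_t\dashrightarrow W_t$ the family of these curves ($W_t$ a smooth model of $B_t$, of dimension $2n-2$, and $\sigma_X{}_{|D_{t,\mathrm{reg}}}=\phi_t^*\sigma_{W_t}$). Finally $\bigcup_{t}D_t=q\bigl(\pi^{-1}(B)\bigr)=q(\mathcal E)=X$, so $X$ is swept out by the one-parameter family $(D_t)_{t\in\mathbb P^1}$ of algebraically coisotropic divisors.
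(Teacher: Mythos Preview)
Your overall strategy---stratify $B$ by the $j$-invariant and show that each resulting divisor $D_t\subset X$ is algebraically coisotropic with characteristic leaves the curves $E_b$---is exactly the paper's. The gap is precisely where you flag it, and it is a genuine one. In the chain
\[
s_v\in H^0(E,K_E)\ \Longleftrightarrow\ \text{the Kodaira--Spencer class of }s_v\text{ in }H^1(E,T_E)\text{ vanishes},
\]
the two sides are images of $s_v$ under two \emph{different} linear functionals on $H^0(E,N_{E/X})$: one is $H^0(\beta_E)$ coming from the surjection $N_{E/X}\twoheadrightarrow\omega_E$ induced by $\sigma_X$, the other is the connecting map of the extension $0\to T_E\to T_X|_E\to N_{E/X}\to 0$. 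There is no a priori reason for these to be proportional, and your appeal to the Mumford--Tate group does not explain how a global Hodge-theoretic hypothesis would force a pointwise identity between these two local objects. Note also that your exclusion of isotriviality is circular: you argue that isotriviality contradicts $\theta\not\equiv0$, but the only link you have between isotriviality (i.e.\ $dj\equiv0$) and $\theta$ is the very identity $\ker\theta=\ker dj$ you are trying to establish.

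The paper's proof avoids this comparison entirely and uses the hypotheses in a quite different way, via the Hodge-theoretic result of van Geemen--Voisin: for very general $X$ with $b_2\geq5$, there is no nonzero morphism of Hodge structures $H^2(X,\mathbb Q)_{tr}\to H^1(E,\mathbb Q)\otimes H^1(B',\mathbb Q)$. Applied to the whole family $\mathcal E$ (which, if isotrivial, would after base change be birational to $E\times B$), this forces the $j$-invariant to be nonconstant. Then for each $t$ the restricted family $\mathcal E_t\to B_t$ \emph{is} isotrivial, hence birational to $E_t\times B_t$ after base change, and the same van Geemen--Voisin input (combined with simplicity of $H^2(X,\mathbb Q)_{tr}$) forces the K\"unneth component of $\phi_t^*\sigma_X$ in $H^1(E_t,\mathbb C)\otimes H^1(B_t,\mathbb C)$ to vanish. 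That vanishing is exactly the statement that $\phi_t^*\sigma_X$ is pulled back from $B_t$, i.e.\ that $\phi(\mathcal E_t)$ is algebraically coisotropic. No infinitesimal comparison between $\theta$ and $dj$ is needed.
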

  Here, ``very general" means that $X$ is the very general member
  of a complete family of polarized hyper-K\"ahler manifolds.
  \begin{proof} There exists by assumption  a covering  family of elliptic curves
  $$\phi: \mathcal{E}\rightarrow X,\,\psi:\mathcal{E}\rightarrow B$$
  with $\phi$ surjective  generically finite and  ${\rm dim}\,B=2n-1$.
  If these elliptic curves have constant moduli, after passing to a generically finite cover of $B$,
  $\mathcal{E}$ becomes birational to a product $E\times B$ and
  we conclude that
  there is an injective morphism
  of Hodge structures
  $$H^2(X,\mathbb{Q})_{tr}\rightarrow H^1(E,\mathbb{Q})\otimes H^1(B,\mathbb{Q}).$$
   Indeed, $\phi^*\sigma_X$ is not in the image of $\psi^*$ because $\psi^*H^{2,0}(B)$
  consists of holomorphic forms of generic rank $<{\rm dim}\,X$, while $\phi^*\sigma_X$
  has generic rank equal to ${\rm dim}\,X$. Hence $\phi^*\sigma_X$ has a nontrivial
  image in $H^1(E,\mathbb{C})\otimes H^1(B,\mathbb{C})$. The natural morphism
  $H^2(X,\mathbb{Q})_{tr}\rightarrow H^1(E,\mathbb{Q})\otimes H^1(B,\mathbb{Q})$ given by pull-back
  and projection to a Leray summand
  is thus nonzero on $H^{2,0}(X)$, hence injective on $H^2(X,\mathbb{Q})_{tr}$.
  When $b_2\geq 5$ and $X$ is very general, the existence of such injective morphism
  contradicts the result of \cite{geemenvoisin}. Hence the elliptic curves $E$ must have
  variable modulus.
  For a fixed $t\in \mathbb{P}^1$, consider the divisor $B_t\subset $
  parameterizing   elliptic curves $E_b$ with fixed $j$-invariant determined by
  $t$. Over $B_t$, the family $\mathcal{E}_t=\psi^{-1}(B_t)$ is birational (possibly after
  after base change) to $E_t\times B_t$. Let
  $$\phi_t: \mathcal{E}_t\rightarrow X,\,\psi_t:\mathcal{E}_t\rightarrow B_t$$
  be the restricted family. The same argument as above shows that
  $\phi_t^*\sigma_X$ has to vanish in $H^1(E_t,\mathbb{C})\otimes H^1(B_t,\mathbb{C})$.
This is exactly saying that $\phi(\mathcal{E}_t)$ is an algebraically coisotropic divisor in $X$,
as this implies that $\phi_t^*\sigma_X$ is pulled-back from $B_t$.
  \end{proof}
 It seems plausible that Question \ref{qpac} has  a negative answer while Question
\ref{qdiv} has a positive answer.
\section{Triangle varieties: examples \label{secexamples}}
Recall from Definition \ref{defitriangle} in the introduction that a triangle variety
$T$ for a hyper-K\"ahler manifold $X$ of dimension $2n$ is a subvariety of $X\times X\times X$ which has dimension
$3n$, maps surjectively onto the various summands and maps in a generically finite way on its image
in the product of two summands, and
is such that
\begin{eqnarray}\label{eqtriangletexte}
({\rm pr}_1^*\sigma_X+{\rm pr}_2^*\sigma_X+{\rm pr}_3^*\sigma_X)_{\mid T_{{\rm reg}}}=0,
\end{eqnarray}
where $\sigma_X$ is the holomorphic $2$-form of $X$.
 Note that Equation (\ref{eqtriangletexte}) says that $T$ is Lagrangian for
the everywhere nondegenerate holomorphic $2$-form ${\rm pr}_1^*\sigma_X+{\rm pr}_2^*\sigma_X+{\rm pr}_3^*\sigma_X$
on $X^3$. A variant of the main deformation invariance theorem
of \cite{voisinlag} says now the following:
\begin{theo} \label{theolagco} Let $X$ be hyper-K\"ahler and
let $j: L\hookrightarrow  X\times X\times X$ be a smooth triangle subvariety
(hence $L$ is  Lagrangian  for the $2$-form ${\rm pr}_1^*\sigma_X+{\rm pr}_2^*\sigma_X+{\rm pr}_3^*\sigma_X$).
Then   for a small deformation $X_t$ of $X$ with constant Picard group, there is a deformation
$j_t:L_t\hookrightarrow  X_t\times X_t\times X_t$  of  $L$, and $L_t$ is  a triangle variety for $X_t$.
\end{theo}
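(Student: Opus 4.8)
The plan is to reduce the statement to the deformation-invariance theorem for Lagrangian subvarieties of \cite{voisinlag}, applied not to $X$ itself but to the compact holomorphically symplectic manifold $W:=X\times X\times X$ equipped with the symplectic form $\Sigma:=\mathrm{pr}_1^*\sigma_X+\mathrm{pr}_2^*\sigma_X+\mathrm{pr}_3^*\sigma_X$, of which $L$ is by hypothesis a smooth Lagrangian submanifold of half-dimension $3n$. Since $h^{2,0}(X_t)$ is constant equal to $1$, the Kuranishi germ $\pi:\mathcal X\to B$ of $X$ carries a canonical (up to scale) relative holomorphic $2$-form, so the relative triple fibre product $\mathcal W:=\mathcal X\times_B\mathcal X\times_B\mathcal X\to B$ carries the relative symplectic form $\Sigma_t=\sum_i\mathrm{pr}_i^*\sigma_{X_t}$. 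Note that $\mathcal W\to B$ is \emph{not} versal: as $h^{0,1}(X)=0$, Künneth gives $H^1(W,T_W)=H^1(X,T_X)^{\oplus3}$, and $\mathcal W\to B$ is the diagonal subfamily. Throughout I work over the germ $B$ and trivialize $R^2\pi_*\mathbb C$, and (over the relevant locus) the analogous local system attached to the deformations of $L$.

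The first step is to establish the variant of \cite{voisinlag} that is needed: since $L$ is a \emph{smooth} Lagrangian submanifold of $W$, its deformations keeping it Lagrangian are unobstructed (the symplectic form identifies $N_{L/W}\cong\Omega^1_L$), and the locus $B_L\subset B$ over which $L$ deforms to a Lagrangian $L_t\subset W_t=X_t^{3}$ for $\Sigma_t$ is a smooth closed analytic subgerm equal to
\[
B_L=\{\,t\in B:\ r(\sigma_{X_t})=0\ \text{in}\ H^2(L,\mathbb C)\,\},\qquad r:=\sum_{i=1}^{3}(\mathrm{pr}_i\circ j)^{*}\colon H^2(X,\mathbb C)\to H^2(L,\mathbb C),
\]
where $\sigma_{X_t}$ is transported to $H^2(X,\mathbb C)$ by the chosen trivialization. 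The inclusion $B_L\subseteq\{r(\sigma_{X_t})=0\}$ is immediate, a Lagrangian restriction being zero already as a holomorphic form; the reverse inclusion, together with smoothness of $B_L$, is proved exactly as in \cite{voisinlag}: the first-order condition for $L$ to deform while remaining Lagrangian in a direction $u\in T_0B$ is the vanishing in $H^1(L,\Omega^1_L)$ of the restriction of $\kappa(u)\lrcorner\Sigma\in H^1(W,\Omega^1_W)$, where $\kappa$ is the Kodaira--Spencer map, and the Hodge-theoretic comparison of \cite{voisinlag} identifies this with the condition that $\tfrac{d}{dt}[\sigma_{X_t}]\big|_{t=0}(u)$ restrict to zero on $L$; both $B_L$ and the period-theoretic locus being then smooth with the same Zariski tangent space at $0$, they coincide as subgerms. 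The only new feature is that $W$ is a product and that we use the non-versal family $\mathcal W\to B$, but the crucial step is local and linear, so one merely restricts the two tangent-space descriptions to $u$ in the diagonal $T_0B\subset H^1(X,T_X)^{\oplus3}$. I expect making this variant precise to be the point requiring the most care.

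The conceptual heart is then the following observation, which explains the hypothesis. On $H^2(X,\mathbb Q)$ the map $r$ is a sum of pull-backs by the morphisms $\mathrm{pr}_i\circ j\colon L\to X$, hence a morphism of rational Hodge structures, so $K:=\ker\bigl(r|_{H^2(X,\mathbb Q)}\bigr)$ is a rational sub-Hodge structure of $H^2(X,\mathbb Q)$. The Lagrangian hypothesis gives $r(\sigma_X)=0$, hence $H^{2,0}(X)\subset K_{\mathbb C}$, and by minimality of the transcendental part $H^2(X,\mathbb Q)_{tr}\subseteq K$; consequently $K^{\perp_q}\subseteq\mathrm{NS}(X)_{\mathbb Q}$, where $q$ is the Beauville--Bogomolov form (recall $H^2(X,\mathbb Q)_{tr}^{\perp_q}=\mathrm{NS}(X)_{\mathbb Q}$). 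Now let $X_t$ be a small deformation of $X$ with constant Picard group. Then every class of $\mathrm{NS}(X)$ stays algebraic, hence of type $(1,1)$, on $X_t$, so by the first Hodge--Riemann relation $\sigma_{X_t}\perp_q\mathrm{NS}(X)_{\mathbb Q}\supseteq K^{\perp_q}$; equivalently, after transport, $\sigma_{X_t}\in K_{\mathbb C}$, i.e.\ $r(\sigma_{X_t})=0$. By the first step, $t\in B_L$: thus $L$ deforms to $j_t\colon L_t\hookrightarrow X_t^{3}$ with $(\mathrm{pr}_1^*\sigma_{X_t}+\mathrm{pr}_2^*\sigma_{X_t}+\mathrm{pr}_3^*\sigma_{X_t})|_{L_t}=0$. (So ``constant Picard group'' is precisely what makes the cohomological obstruction to deforming $L$ vanish.)

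Finally I would check that $L_t$ is a \emph{triangle} variety for $X_t$ in the sense of Definition \ref{defitriangle}. Besides the Lagrangian property just established, one needs $\dim L_t=3n$, which holds because $L_t$ is a flat deformation of $L$; surjectivity of each $\mathrm{pr}_i|_{L_t}\colon L_t\to X_t$; and generic finiteness of each $\mathrm{pr}_{ij}|_{L_t}$ onto its image. The last two are open conditions in the flat family $\{L_t\}_{t\in B_L}$: surjectivity of $\mathrm{pr}_i|_{L_0}$ propagates because the image of the proper family $\mathcal L\to\mathcal X$ is closed and misses nothing over $0$, and generic finiteness onto the image propagates by upper semicontinuity of fibre dimension together with the fact that the fibres of $\mathrm{pr}_{ij}|_{L_t}$ coincide with those of $\mathrm{pr}_{ij}|_{\mathcal L}$. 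After shrinking $B$ so that these conditions persist, $L_t$ satisfies Definition \ref{defitriangle}, and $L_t$ is a triangle variety for $X_t$, which proves the theorem.
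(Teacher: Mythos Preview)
Your proposal is correct and follows essentially the same approach as the paper: invoke the deformation-invariance result of \cite{voisinlag} for smooth Lagrangian submanifolds (applied to $L\subset X^3$ with the form $\Sigma$), and then verify via a Hodge-theoretic argument that the period condition $r(\sigma_{X_t})=0$ holds along the constant-Picard locus because $H^2(X,\mathbb Q)_{tr}$ lies in $\ker r$. The paper phrases this last step as local constancy of $j_t^*$ together with $H^2(X,\mathbb Q)^{\perp\Lambda}=H^2(X,\mathbb Q)_{tr}$, which is exactly your $K\supseteq H^2(X,\mathbb Q)_{tr}$ observation; you additionally spell out the openness of the surjectivity and generic-finiteness conditions, which the paper leaves implicit.
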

The last statement follows from the fact
that, denoting $\Lambda={\rm NS}(X)$,  the
subgroup $H^2(X_t,\mathbb{Q})^{\perp \Lambda}$
and the  restriction map $$j_t^* :H^2(X_t\times X_t\times X_t,\mathbb{Q})\rightarrow H^2(L_t,\mathbb{Q})$$
are  locally constant on the base $B$ of deformations of $X$ with fixed Picard number.
Hence the diagonal image of $H^2(X_t,\mathbb{Q})^{\perp\Lambda}$ in
$H^2(X_t\times X_t\times X_t,\mathbb{Q})=H^2(X_t,\mathbb{Q})^3$ is annihilated by $j_t^*$, since it is annihilated by $j^*$
(note here that $H^2(X,\mathbb{Q})^{\perp \Lambda}=H^2(X,\mathbb{Q})_{tr}$).
\begin{rema}{\rm A smooth  triangle subvariety $L\subset X\times X\times X$ cannot deform in
products $X_t\times X_{t'}\times X_{t''}$ unless $t=t'=t''$. Indeed, the kernel $H$ of
$j^*:H^2(X,\mathbb{Q})_{tr}^3\rightarrow H^2(L,\mathbb{Q})$ is exactly the diagonal image of
$H^2(X,\mathbb{Q})_{tr}$, as it follows from the fact that $L$ maps to a subvariety of dimension $3n$ in the
three products $X\times X$. If there is
a deformation $L_{t,t',t''}$ of $L$ in
$X_t\times X_{t'}\times X_{t''}$, there is a Hodge substructure
$$H_{t,t',t''}\subset  H^2(X_t,\mathbb{Q})^{\perp\Lambda}\oplus  H^2(X_{t'},\mathbb{Q})^{\perp\Lambda}\oplus
H^2(X_{t''},\mathbb{Q})^{\perp\Lambda}$$
deforming $H$. But then $H_{t,t',t''}$ is isomorphic  by projections to the three Hodge structures
$H^2(X_t,\mathbb{Q})^{\perp\Lambda},\,\,  H^2(X_{t'},\mathbb{Q})^{\perp\Lambda},\,\,
H^2(X_{t''},\mathbb{Q})^{\perp\Lambda}$. By the local Torelli theorem, we then have $t=t'=t''$.
 }
\end{rema}
Theorem \ref{theolagco} suggests possibly  that triangle subvarieties tend to be stable under deformations
with constant Picard number, but in the examples we will describe below, the triangle
subvarieties are never smooth, so in fact
Theorem \ref{theolagco} does not apply.

Considering  the conjectures made in \cite{beau}, \cite{voicoiso}, it would be very nice that the triangle varieties
$T$ satisfy
  a cycle-theoretic variant of
(\ref{eqtriangletexte}), asking
the following:
for any $t=(t_1,t_2,t_3)\in T\subset X^3$
\begin{eqnarray}\label{eqpourcycle}  t_1+t_2+t_3 =c\,\,{\rm in}\,\,{\rm CH}_0(X),
\end{eqnarray}
for some  fixed zero-cycle $c$ of $X$.
Formula (\ref{eqpourcycle}) implies indeed (\ref{eqtriangletexte}) by Mumford's theorem \cite{mumford}.
Let us explain why it is not possible to achieve   (\ref{eqpourcycle}) starting from dimension $4$.

\begin{prop} \label{pronexistepasdanschow} Let $X$ be a projective  hyper-K\"ahler manifold of dimension $2n\geq 4$.
Let $T$ be a triangle subvariety of  $X\times X\times X$. Then the cycle $t_1+t_2+t_3\in {\rm CH}_0(X)$ for
$t= (t_1,t_2,t_3)\in T$ is not constant along
$T$.
\end{prop}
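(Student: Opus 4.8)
The plan is to argue by contradiction: if $t_1+t_2+t_3 = c$ in ${\rm CH}_0(X)$ for all $(t_1,t_2,t_3)\in T$, then differentiating this relation against holomorphic forms will force $X$ to carry too many independent decompositions of $\sigma_X$ into rank-$2$ pieces, contradicting the cohomological constraints established earlier (in the spirit of Theorem \ref{coronotdecomp}). More precisely, I would first use that $T$ dominates $X$ by the first projection ${\rm pr}_1$, so fixing a general point $t_1 = x \in X$, the fiber $T_x := T\cap (\{x\}\times X\times X)$ has dimension $2n$ and maps via $({\rm pr}_2,{\rm pr}_3)$ generically finitely onto a subvariety of $X\times X$; the constancy hypothesis says that on $T_x$ one has $t_2 + t_3 = c - x$ constant in ${\rm CH}_0(X)$. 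By Mumford's theorem \cite{mumford} (applied in the relative/fibered form), the restriction of ${\rm pr}_2^*\sigma_X + {\rm pr}_3^*\sigma_X$ to $T_x$ vanishes; since $T_x$ is already Lagrangian-in-the-fiber for the ambient form restricted from $T$, this is consistent, but the point is that $T_x$ is then a $2n$-dimensional constant-cycle-type subvariety of $X\times X$ on which a nondegenerate holomorphic $2$-form (the sum ${\rm pr}_2^*\sigma_X+{\rm pr}_3^*\sigma_X$ on the $4n$-dimensional $X\times X$) vanishes identically.

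The heart of the argument, then, is to extract a contradiction from having a family (parameterized by $x\in X$) of such subvarieties, or more directly from the cycle relation itself. I would proceed as follows. Let $W \subset X\times X$ be the image of $({\rm pr}_2,{\rm pr}_3)|_{T_x}$ for general $x$ (it has dimension $2n$); as $x$ varies we get a family of such $W = W_x$. Pick a general pair $(t_2,t_3)\in T_x$ and another general pair $(t_2',t_3')\in T_{x'}$ for a nearby $x'$; subtracting the two relations $t_2+t_3 = c-x$ and $t_2'+t_3' = c-x'$ gives $(t_2 - t_2') + (t_3 - t_3') = x' - x$ in ${\rm CH}_0(X)$. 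Varying $x'$ near $x$ and the points in the fibers, one obtains that the $0$-cycle $x' - x$ lies in the image of a specific algebraic map from a family of reduced cycles; the key numerical input is that $n\geq 2$, i.e. $\dim X = 2n \geq 4$, which is exactly where the example of an elliptic surface (Example \ref{exintro}), where $n=1$ and such a triangle variety with constant cycle-sum {\it does} exist, fails to generalize. For $n=1$ the sum map on an elliptic surface gives $t_1+t_2+t_3$ constant in each fiber over the base curve, but this uses that fibers are elliptic curves (genus $1$, so ${\rm CH}_0$ has a $1$-dimensional ``moving part'' killed by the group law); in higher dimension no such structure is available.

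Concretely I expect the cleanest contradiction to come from a dimension/transcendence count on ${\rm CH}_0$: the constancy relation, together with the fact that $T$ maps generically finitely to its image in $X\times X$ under any pair of projections, forces the ``orbit'' $\{t_1 : (t_1,t_2,t_3)\in T,\ t_2,t_3 \text{ general}\}$ — which is all of $X$ since ${\rm pr}_1$ is dominant — to consist of points rationally equivalent modulo a fixed cycle to a $2$-dimensional (i.e. ``surface-worth'') family of cycles coming from the other two coordinates; but each such coordinate itself ranges over $X$, so one gets that the diagonal cycle class, hence the identity on the "transcendental part" of ${\rm CH}_0(X)$, factors through something of the wrong size. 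Invoking Mumford's theorem to translate the (hypothetical) cycle identity into the identity of holomorphic $2$-forms $\phi^*\sigma_X = \psi^*(\text{sum of two forms pulled back from } X)$ on the normalization of $T$, and then comparing ranks — $\sigma_X$ has rank $2n$ while each summand on the right has rank $\le 2n$ but the Lagrangian condition (\ref{eqtriangletexte}) forces them to interact so that no genuine rank-$2$ splitting can occur unless $n=1$ — yields the contradiction.

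\smallskip
\noindent\textbf{Main obstacle.} The delicate point is making rigorous the passage from ``$t_1+t_2+t_3$ constant in ${\rm CH}_0(X)$ for all $t\in T$'' to a usable {\it infinitesimal} statement. Mumford's theorem gives vanishing of pulled-back holomorphic forms on the relevant (sub)varieties, but here the three points move simultaneously subject to the $3n$-dimensional constraint cutting out $T$, so one must carefully choose which points to fix and which to vary — essentially differentiating the relation along $T$ while keeping track of the Lagrangian condition — to land on a clean contradiction rather than a tautology. I anticipate the proof will fix a general $t_1$, use the fiber $T_{t_1}$ and the induced constant-cycle relation $t_2+t_3 = \text{const}$, apply Mumford there to get that ${\rm pr}_2^*\sigma_X+{\rm pr}_3^*\sigma_X$ restricts to zero on $T_{t_1}$, and then — crucially using $n\ge 2$ so that $T_{t_1}$ has dimension $2n\ge 4 > 2$ — derive that the nondegenerate symplectic form on $X\times X$ would have an isotropic subvariety of half-dimension on which, upon letting $t_1$ vary, the induced variation of Hodge structure degenerates in a way excluded by the simplicity of the Hodge structure on $H^2(X,\mathbb{Q})_{tr}$.
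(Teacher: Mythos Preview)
Your proposal has a genuine gap: it never extracts from the constancy hypothesis any information beyond what the Lagrangian condition (\ref{eqtriangletexte}) already gives. You yourself notice this (``this is consistent''), and the subsequent sketches --- subtracting relations at nearby points, dimension counts on ${\rm CH}_0$, VHS degeneration --- remain vague and do not close the argument. There is also a dimension error: since ${\rm dim}\,T=3n$ and ${\rm pr}_1:T\to X$ is surjective onto a $2n$-dimensional target, the general fiber $T_x$ has dimension $n$, not $2n$. An $n$-dimensional isotropic subvariety for a rank-$4n$ form on $X\times X$ is in no way remarkable, so that line of attack carries no force.

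The key idea you are missing is this: Mumford's theorem applied to the constant cycle $t_1+t_2+t_3$ gives vanishing on $T_{\rm reg}$ not only of ${\rm pr}_1^*\sigma_X+{\rm pr}_2^*\sigma_X+{\rm pr}_3^*\sigma_X$ (which you already know from the triangle condition) but of ${\rm pr}_1^*\sigma_X^l+{\rm pr}_2^*\sigma_X^l+{\rm pr}_3^*\sigma_X^l$ for \emph{every} $l\geq 1$. Taking $l=2$ and eliminating ${\rm pr}_1^*\sigma_X$ via the $l=1$ relation yields $({\rm pr}_2^*\sigma_X+{\rm pr}_3^*\sigma_X)^2=-({\rm pr}_2^*\sigma_X^2+{\rm pr}_3^*\sigma_X^2)$ on $T_{\rm reg}$, i.e.\ a nontrivial quadratic relation ${\rm pr}_2^*\sigma_X^2+{\rm pr}_2^*\sigma_X\wedge{\rm pr}_3^*\sigma_X+{\rm pr}_3^*\sigma_X^2=0$, which factors over $\mathbb{C}$ as $\omega\wedge\omega'=0$ with $\omega,\omega'$ the two complex linear combinations ${\rm pr}_2^*\sigma_X+\zeta^{\pm 1}{\rm pr}_3^*\sigma_X$ ($\zeta$ a primitive cube root of unity). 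Lemma \ref{lemmeutilesurproduit} then forces either one of $\omega,\omega'$ to vanish on $T_{\rm reg}$ --- impossible since $({\rm pr}_2,{\rm pr}_3)(T)$ has dimension $3n>2n$ --- or both ${\rm pr}_2^*\sigma_X$ and ${\rm pr}_3^*\sigma_X$ to be pulled back, at a general point of $T$, from a quotient of dimension $\leq 4$. But each ${\rm pr}_i^*\sigma_X$ has rank $2n$ on $T$ since ${\rm pr}_i$ is dominant, giving a contradiction for $n\geq 3$; for $n=2$ one uses additionally that the two kernels differ. The essential input you did not locate is the $l=2$ equation: the constancy hypothesis buys you control over \emph{higher} holomorphic forms, and that is where the contradiction with $n\geq 2$ actually lives.
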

\begin{proof} Indeed, if  (\ref{eqpourcycle}) holds, then Mumford's theorem
\cite{mumford}  says that
for any power
$\sigma_X^l$, $l>0$,  of $\sigma_X$,
\begin{eqnarray}\label{eqpourpower}  ({\rm pr}_1^*\sigma_X^l+{\rm pr}_2^*\sigma_X^l+{\rm pr}_3^*\sigma_X^l)_{\mid T_{{\rm reg}}}=0\,\,{\rm in}\,\,H^0(T_{\rm reg},\Omega_{T_{\rm reg}}^{2l}).
\end{eqnarray}
We now set $l=2$.  We then have the two equations
\begin{eqnarray}\label{eqpourpowerdeuxeq}
 ({\rm pr}_1^*\sigma_X)_{\mid T_{{\rm reg}}}=-({\rm pr}_2^*\sigma_X+{\rm pr}_3^*\sigma_X)_{\mid T_{{\rm reg}}} \,\,{\rm in}\,\,H^0(T_{\rm reg},\Omega_{T_{\rm reg}}^{2}),
\\
\nonumber
({\rm pr}_1^*\sigma_X^2)_{\mid T_{{\rm reg}}}=-({\rm pr}_2^*\sigma_X^2+{\rm pr}_3^*\sigma_X^2)_{\mid T_{{\rm reg}}}\,\,{\rm in}\,\,H^0(T_{\rm reg},\Omega_{T_{\rm reg}}^{4}).
\end{eqnarray}
It follows that
\begin{eqnarray}\label{eqpourcrossproduct}
 -({\rm pr}_2^*\sigma_X^2+{\rm pr}_3^*\sigma_X^2)_{\mid T_{{\rm reg}}}=({\rm pr}_2^*\sigma_X+{\rm pr}_3^*\sigma_X)^2_{\mid T_{{\rm reg}}} \,\,{\rm in}\,\,H^0(T_{\rm reg},\Omega_{T_{\rm reg}}^{4}).
\end{eqnarray}
Let us write the above equation as
\begin{eqnarray}\label{eqpourcrosszero} \omega_{\mid T_{{\rm reg}}}\wedge \omega'_{\mid T_{{\rm reg}}}=0 \,\,{\rm in}\,\,H^0(T_{\rm reg},\Omega_{T_{\rm reg}}^{4}),
\end{eqnarray}
where $$\omega:={\rm pr}_2^*\sigma_X+\frac{-1+i\sqrt{3}}{2}{\rm pr}_3^*\sigma_X,\,
\omega':={\rm pr}_2^*\sigma_X-\frac{-1-i\sqrt{3}}{2}{\rm pr}_3^*\sigma_X.$$
The contradiction now comes from Lemma \ref{lemmeutilesurproduit} and
(\ref{eqpourcrossproduct}) which imply that either $\omega_{\mid T_{{\rm reg}}}=0$ or $\omega'_{\mid T_{{\rm reg}}}=0$, or
both  forms
$\omega_{\mid T_{{\rm reg}}}$ and $\omega'_{\mid T_{{\rm reg}}}$
 have rank $\leq 4$ at any point $t$  of $T$ and more precisely, at any point $t\in T_{{\rm reg}}$, are pulled-back
 via  a quotient map $T_{T,t}\rightarrow T'$, with  ${\rm dim}\,T'\leq4$.
 The form $\omega_{\mid T}$ cannot be $0$ because this would imply that
 the projection of $T$ in $X\times X$ via $({\rm pr}_2,\,{\rm pr}_3)$ is Lagrangian for a form which has rank $4n$
everywhere  on $X\times X$ while  by assumption ${\rm dim}\,({\rm pr}_2,\,{\rm pr}_3)(T)=3n$.
The same argument  also works for $\omega'$.
We thus conclude that the last possibility should hold. In that case,  the restrictions to $T$ of
  ${\rm pr}_2^*\sigma_X$ and ${\rm pr}_3^*\sigma_X$
 are also pulled-back via the quotient map $T_{T,t}\rightarrow T'$ hence have rank $\leq 4$.
The form $\sigma_X$ on $X$ is everywhere nondegenerate and
the projections ${\rm pr}_2,\,{\rm pr}_3$ restricted to $T$ are dominant, so we conclude that
the forms ${\rm pr}_2^*\sigma_X,\,{\rm pr}_3^*\sigma_X$ restricted to $T$  have rank equal to ${\rm dim}\,X$. As
 they  are  of rank $\leq 4$
 at a general point $t\in T$,  we get  a contradiction
if  $n\geq3$. If $n=2$, these  forms pulled-back to $T$ have rank $4$ but
they do not have the same kernel, because their respective kernels at a point $t\in T_{{\rm reg}}$  are
the spaces ${\rm Ker}\,({\rm pr}_{2\mid T})_* $, ${\rm Ker}\,({\rm pr}_{3\mid T})_*$ which are
 different generically on $T$ by the assumption that $({\rm pr}_2,{\rm pr}_3)$ is generically of maximal rank.
This contradiction concludes the proof.
\end{proof}
We construct  in the next subsections  triangle varieties for  the main `` known''  classes  of hyper-K\"ahler manifolds, for which we have
an explicit projective model.
\subsection{Hilbert schemes of $K3$ surfaces \label{sechilbtriangle}}
Recall from
\cite{beauvoi} that a projective $K3$ surface $S$ has a canonical $0$-cycle $o_S$ of degree $1$
with the following property : for any integer $k\geq 1$, the  degree-$k$ $0$-cycle
$ko_S$  on $S$ has a $k$-dimensional orbit
$$O_{ko_S}=\{z\in S^{(k)},\,z=ko_S\,\,{\rm in}\,\,{\rm CH}_0(S)\}$$
 in $S^{(k)}$ for rational equivalence
on $S$. An explicit example of   a
$k$-dimensional  orbit component of $ko_S$ and of  a triangle variety for $S^{[n]}$ is as follows.
Assume $S$ has a very ample polarization $L\in{\rm Pic}\,S$ with ${\rm deg}\,L^2=2g-2$. Let $k=2g-2$.
One component of the  orbit $O_{L^2}\subset S^{(2g-2)}$ of the zero-cycle $L^2$
   is birational to the Grassmannian $G(2,H^0(S,L))$
and is made of complete intersections $H_1\cap H_2$,
with $H_1,\, H_2\in|L|$, or rather of their supports.
Note that ${\rm dim}\,G(2,H^0(S,L))=2g-2$ as we want.
Assume furthermore that $2g-2=3n$ is divisible by $3$ and consider
\begin{eqnarray} T:=\{(z_1,\,z_2,\,z_3)\in (S^{[n]})^3,\,\,c(z_1)+c(z_2)+c(z_3)\in O_{L^2}\subset
 S^{(3n)}\},
\end{eqnarray}
where $c:S^{[l]}\rightarrow S^{(l)}$ denotes the Hilbert-Chow morphism.
\begin{prop} $T$ is a triangle variety for $S^{[n]}$.
\end{prop}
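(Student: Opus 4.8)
The plan is to verify directly the four requirements of Definition~\ref{defitriangle} for $X=S^{[n]}$: that $\dim T=3n$; that each of the three projections $T\to S^{[n]}$ is dominant; that $T$ maps generically finitely onto its image under each of the three projections to $S^{[n]}\times S^{[n]}$; and that $({\rm pr}_1^*\sigma_X+{\rm pr}_2^*\sigma_X+{\rm pr}_3^*\sigma_X)_{\mid T_{\rm reg}}=0$, where $\sigma_X=\sigma_{S^{[n]}}$. Throughout, $T$ is realized as the preimage of $O:=O_{L^2}$ under the composite morphism $f\colon (S^{[n]})^3\to (S^{(n)})^3\to S^{(3n)}$. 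Since $f$ is finite over the dense open locus of reduced divisors in $S^{(3n)}$ and a general point of the irreducible variety $O$ (birational to $G(2,H^0(S,L))$, of dimension $2g-2=3n$) lies there, $T$ is generically finite over $O$; in particular it has pure dimension $3n$, which settles the first point.

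For dominance, fix a general $z_1\in S^{[n]}$. As $n$ general points impose independent conditions on the very ample system $|L|$, one has $h^0(\mathcal I_{z_1}\otimes L)=h^0(L)-n=g+1-n\ge 2$; picking a pencil through $z_1$, its base locus $B$ is a complete intersection of two members of $|L|$, hence a point of $O$, and generically a reduced $0$-cycle of length $3n$ containing $z_1$. Splitting $B\setminus z_1$ into two length-$n$ subschemes $z_2,z_3$ gives $(z_1,z_2,z_3)\in T$; by the symmetry of $T$, all three projections are dominant.

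The generic finiteness onto the image in $S^{[n]}\times S^{[n]}$ is the crux, and I expect it to be the main obstacle. It reduces to the claim, in which the precise relation $3n=2g-2$ is essential, that a general length-$2n$ subscheme $z$ lying on a complete intersection of two members of $|L|$ has $h^0(\mathcal I_z\otimes L)=2$. Granting this: for a general $(z_1,z_2)$ in the image of $({\rm pr}_1,{\rm pr}_2)_{\mid T}$, the subscheme $z=z_1+z_2$ is general in the locus $D\subset S^{[2n]}$ of such subschemes (because $S^{[n]}\times S^{[n]}\to S^{[2n]}$ is generically finite and $D$ is the closure of the image of $T$), so $z$ lies on a \emph{unique} pencil in $|L|$, whose base locus $B$ is then the unique element of $O$ containing $z$ --- here one uses that $h^0(\mathcal I_B\otimes L)=2$ for every complete intersection $B$, immediate from the Koszul resolution of $\mathcal I_B\otimes L$ together with $H^i(S,\mathcal O_S(-L))=0$ for $i=0,1$ --- whence $z_3=B-z_1-z_2$ is forced and the fibre is a single point; symmetry takes care of the other two projections. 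To prove the claim, write a general such $z$ as $z=B-z'$ with $B=C_1\cap C_2$, $C_1\in|L|$ smooth of genus $g$ (so that $\mathcal O_{C_1}(L)=K_{C_1}$ and $B$ is a general canonical divisor on $C_1$) and $z'\subset B$ a general length-$n$ subset, hence $n$ general points of $C_1$. From the sequence $0\to\mathcal O_S\to\mathcal I_z\otimes L\to K_{C_1}(-z)\to 0$ and $H^1(S,\mathcal O_S)=0$ one gets $h^0(\mathcal I_z\otimes L)=1+h^0(C_1,K_{C_1}(-z))$; since $\mathcal O_{C_1}(z)=K_{C_1}(-z')$ and $z'$ consists of $n\le g-1$ general points, Riemann-Roch on $C_1$ gives in turn $h^0(\mathcal O_{C_1}(z))=g-n$ and $h^1(\mathcal O_{C_1}(z))=2g-3n-1=1$, i.e. $h^0(K_{C_1}(-z))=1$, proving the claim.

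It remains to check the Lagrangian condition. On the open locus of reduced divisors there is a holomorphic $2$-form $\sigma_{S^{(3n)}}$ descending $\sum_{j=1}^{3n}{\rm pr}_j^*\sigma_S$, and there $f^*\sigma_{S^{(3n)}}={\rm pr}_1^*\sigma_X+{\rm pr}_2^*\sigma_X+{\rm pr}_3^*\sigma_X$; hence the restriction of the latter to $T_{\rm reg}$ is the pullback of $\sigma_{S^{(3n)}}$ restricted to $O_{\rm reg}$, and it is enough to see that this restriction vanishes. This is the infinitesimal counterpart of the fact that $O=O_{L^2}$ is a constant cycle subvariety (cf.\ \cite{huybrechts}), and I would verify it directly: near a general point $O$ is identified with $G(2,H^0(S,L))$, and writing $B=\{s_1=s_2=0\}=\{p_1,\dots,p_{3n}\}$ for the corresponding complete intersection, the differential of $G(2,H^0(S,L))\to S^{(3n)}$ sends $\phi\in{\rm Hom}(\langle s_1,s_2\rangle,H^0(S,L)/\langle s_1,s_2\rangle)$ to the tuple $(v_{p_j})_j$ determined by $ds_i{}_{\mid p}(v_p)=-\phi(s_i)(p)$ (well defined because $s_1,s_2$ vanish at $p$), and a short local computation gives $\sigma_S{}_{\mid p}(v_p,v'_p)=\widetilde h(p)/\bigl((ds_1\wedge ds_2)/\sigma_S\bigr)(p)$, where $\widetilde h\in H^0(S,L^{\otimes 2})$ is any lift of $\phi(s_1)\phi'(s_2)-\phi(s_2)\phi'(s_1)$. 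Summing over $p\in B$ then identifies $\sum_j\sigma_S{}_{\mid p_j}(v_{p_j},v'_{p_j})$ with the total residue on $S$ of the global meromorphic $2$-form $\widetilde h\,\sigma_S/(s_1s_2)$, which vanishes by the residue theorem. Hence $\sigma_{S^{(3n)}}$ vanishes on $O_{\rm reg}$, completing the proof that $T$ is a triangle variety for $S^{[n]}$.
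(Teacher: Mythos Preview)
Your proof is correct and, for the dimension count, dominance of the three projections, and generic finiteness onto $S^{[n]}\times S^{[n]}$, it follows the same line as the paper's own argument (indeed you supply more detail for the generic finiteness step: the paper simply asserts that the first $2n$ points of a general complete intersection impose $g-1$ conditions on $|L|$, whereas you prove this via Riemann--Roch on a smooth $C_1\in|L|$).

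The one genuine difference is in the Lagrangian condition. The paper invokes Mumford's theorem: the $0$-cycle $c(z_1)+c(z_2)+c(z_3)\in{\rm CH}_0(S)$ is constant along $T$ (it equals $L^2$), and since $\sigma_{S^{[n]}}$ is induced from $\sigma_S$ via the incidence correspondence, Mumford's theorem directly gives $({\rm pr}_1^*\sigma_X+{\rm pr}_2^*\sigma_X+{\rm pr}_3^*\sigma_X)_{\mid T_{\rm reg}}=0$. You instead compute the restriction of $\sigma_{S^{(3n)}}$ to $O_{\rm reg}\cong G(2,H^0(S,L))$ by hand and identify $\sum_{p\in B}\sigma_S(v_p,v'_p)$ with a sum of Grothendieck local residues of the meromorphic $2$-form $\widetilde h\,\sigma_S/(s_1s_2)$, which vanishes by the global residue theorem for complete intersections. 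Your route is more elementary---it avoids the machinery of Mumford's theorem and gives an explicit infinitesimal reason why the Lagrangian condition holds---while the paper's route is more conceptual and ties the construction to the constant-cycle viewpoint that pervades the rest of the paper (and to the Beauville--Voisin $0$-cycle $o_S$). Both arguments are valid; yours has the virtue of being self-contained, the paper's of explaining \emph{why} one should expect the vanishing in the first place.
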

\begin{proof} The relation $({\rm pr}_1^*\sigma_{S^{[n]}}+{\rm pr}_2^*\sigma_{S^{[n]}}+{\rm pr}_3^*\sigma_{S^{[n]}})_{\mid T_{{\rm reg}}}$ follows  from the fact that the $0$-cycle
$c(z_1)+c(z_2)+c(z_3)$ is constant in ${\rm CH}_0(S)$ along $T$ and from  Mumford's theorem \cite{mumford}  because  the holomorphic $2$-form $\sigma_{S^{[n]}}$ is induced by the holomorphic $2$-form $\sigma_S$ via the incidence correspondence.
The fact that the dimension of $T$ is $3n$ follows from the fact that  $T$ is birational
 to  a generically finite cover of $O_{L^2}$ which has dimension $2g-2=3n$.
It remains to see that $T$ dominates the three summands and that it maps in a
generically finite to one way to its images in the three products $S^{[n]}\times S^{[n]}$.
The first statement  follows from the fact that $L$ is very ample
with $h^0(S,L)=g+1$, where $3n=2g-2$. This implies that for a general set
$z_1=\{x_1,\ldots,\,x_n\}$ of $n$ points
 of $S$, there is a reduced complete intersection $Z$ of two members of $|L|$ containing all the $x_i$.
 Then writing $Z$ as the union $z_1\sqcup z_2\sqcup z_3$ of three sets of cardinality $n$, we have
 $(z_1,z_2,z_3)\in T$.

For the second statement, we observe  that for such a general  reduced
$0$-dimensional complete intersection
$$Z=H_1\cap H_2=\{x_1,\ldots,x_{2g-2}\},$$
with $2g-2=3n$,
the first $2n$ points $x_1,\ldots,\,x_{2n}$ already impose $g-1$ conditions to $|L|$,
hence the space of hypersurfaces in $|L|$ containing
 these $2n$ points is the projective line $\langle H_1,\,H_2\rangle$.
Setting
$$z_1=\{x_1,\ldots,x_{n}\},\,z_2=\{x_{n+1},\ldots,x_{2n}\},\,\,z_3=\{x_{2n+1},\ldots,x_{3n}\},$$
we have $(z_1,z_2,z_3)\in T$ and the
fiber of the projection $p_{12}: T\rightarrow S^{[n]}\times S^{[n]}$ over $(z_1,z_2)$ consists by definition
of the single element  $z_3$.
\end{proof}
The numerical  condition $3n=2g-2$ used above is not important, as there are variants of this construction, starting from other Lagrangian subvarieties of $S^{[3n]}$, also obtained
as components of dimension $3n$ of the orbit of $3no_S$ in $S^{(3n)}$.
\subsection{Fano variety of lines in a cubic fourfold\label{subsecvarofline}}
The Fano variety $F_1(Y)$  of lines in a smooth cubic fourfold $Y$ is a hyper-K\"ahler
fourfold (see \cite{beaudo}). In this case, the triangles are just triangles in a usual sense,
namely the plane sections of $Y$ which are the unions of three lines
(plus an ordering of these lines). They form
a $6$-dimensional subvariety of $F_1(Y)^3$. Indeed, for each line $l\subset Y$, consider
the $\mathbb{P}^3_l$ of
planes containing $l$. Each of these planes  cuts $Y$ along the union of $l$ and a conic,
 and when the conic is degenerate, that is along a surface in $\mathbb{P}^3_l$,
 the conic becomes the  union of two lines, which together with
  $l$ form a triangle.
In this case, the fact that the family
of these triangles is   a Lagrangian subvariety of $F_1(Y)^3$
 is a consequence of Mumford's theorem \cite{mumford}. Indeed
 we know  that, via the incidence correspondence
 $p:P\rightarrow F_1(Y),\,q:P\rightarrow Y$
 given by the universal family of lines in $Y$,
 one has $\sigma_{F_1(Y)}=P^*\eta_Y$ for some class  $\eta_Y\in H^1(Y,\Omega_Y^3)$, and furthermore,
 for any triangle $([l],[l'],[l''])\in F_1(Y)^3$
   $$P_*l+P_*l'+P_*l''=h^3\,\,{\rm in}\,\,{\rm CH}_1(Y),$$ where $h=c_1(\mathcal{O}_Y(1))$.
\subsection{Debarre-Voisin hyper-K\"ahler fourfolds \label{secbazhov}}
Let $V_{10}$ be a $10$-dimensional vector space and let
$\lambda\in\bigwedge^3V_{10}^*$. The associated Debarre-Voisin fourfold
$F_\lambda\subset G(6,V_{10})$ is the set of $6$-dimensional vector subspaces $W_6\subset V_{10}$ such that
$\lambda_{\mid W_6}=0$. This is a hyper-K\"ahler fourfold for a general parameter $\lambda$
 (see \cite{debarrevoisin}).
 In \cite{bazhov},  Bazhov considered the  subvariety
$T\subset F_\lambda\times F_\lambda\times F_\lambda$
parameterizing triples $([W],[W'],[W''])\in F_\lambda^3$ such that the three subspaces  $ W,W'$ and $W''$
of $V_{10}$ generate only a $V_9\subset V_{10}$. He proved the following:
\begin{theo}(i) $ T $ has dimension $6$. It is birationally equivalent via the projection
 $p_{12}$ to the incidence subvariety $I\subset F_\lambda\times F_\lambda$
 defined as the set of couples $([W],[W'])$ such that $ W$ and $W'$  generate
 only a $V_9\subset V_{10}$.

 (ii) $I$ dominates $F_\lambda$ by the first projection.

(iii) $T$ is  a Lagrangian subvariety of $F_\lambda\times F_\lambda \times F_\lambda$.
\end{theo}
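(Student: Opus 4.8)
The plan is to prove (i) and (ii) by dimension counts on $G(6,V_{10})$ organised around the hyperplane $V_9:=W+W'$, and to prove (iii) by a direct computation with the holomorphic $2$-form of $F_\lambda$. It should be noted at the outset that, since $\dim F_\lambda=4$, Proposition \ref{pronexistepasdanschow} forbids the cycle-theoretic shortcut (via Mumford's theorem \cite{mumford}) available in the Hilbert-scheme and Fano-of-lines cases; the Lagrangian property (iii) must be established by a bare-hands argument.

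Write $\mathcal{G}=G(6,V_{10})$ with tautological subbundle $U$ and rank-$4$ quotient $Q$, so $F_\lambda=Z(s_\lambda)$ for $s_\lambda\in H^0(\mathcal{G},\bigwedge^3U^*)$, of dimension $4$; for a hyperplane $V_9\subset V_{10}$ set $\Phi(V_9):=F_\lambda\cap G(6,V_9)=\{[W_6]\subset V_9:\ (\lambda|_{V_9})|_{W_6}=0\}$. On $\mathcal{G}^2$ the locus $\{\dim(W+W')\le 9\}$ is the degeneracy locus $\{\mathrm{rank}\,(U_W\to V_{10}/W')\le 3\}$, of codimension $3$, and on $\mathcal{G}^3$ the locus $\{\dim(W+W'+W'')\le 9\}$ has codimension $9$; but the intersections with $F_\lambda^{2}$ and $F_\lambda^{3}$ are \emph{not} of the expected dimension, so one must argue through the rational maps $q:I\dashrightarrow\mathbb{P}(V_{10}^*)$, $([W],[W'])\mapsto[W+W']$, and $\widetilde{q}:T\dashrightarrow\mathbb{P}(V_{10}^*)$, defined at the general point (where $\dim(W+W')=9$). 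Over a general $V_9$ in the image, the fibre of $q$ (resp.\ $\widetilde{q}$) is the set of ordered pairs (resp.\ ordered triples) of points of $\Phi(V_9)$ spanning $V_9$. The crucial input — and the place where the generality of $\lambda$ is essential — is that for such $V_9$ the scheme $\Phi(V_9)$ is reduced of length exactly $3$, say $\{W_1,W_2,W_3\}$. Granting it, the fibre of $\widetilde{q}$ is the set of the $3!$ orderings of $\{W_1,W_2,W_3\}$, the fibre of $q$ the set of the $3\cdot 2$ ordered pairs, and $p_{12}\colon(W_i,W_j,W_k)\mapsto(W_i,W_j)$ identifies the two bijectively, the third vertex being recovered as the remaining point of $\Phi(W+W')$. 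This shows at once that $p_{12}:T\to I$ is birational and that $\dim T=\dim I=\dim\mathcal{T}$, where $\mathcal{T}\subset\mathbb{P}(V_{10}^*)$ is the image of $\widetilde{q}$. Finally $\dim\mathcal{T}=6$ because the incidence $\{(W,V_9):[W]\in F_\lambda,\ W\subset V_9,\ \Phi(V_9)\supsetneq\{W\}\}$ fibres over $F_\lambda$ in surfaces inside the $\mathbb{P}^3=\mathbb{P}((V_{10}/W)^*)$ of hyperplanes through $W$, hence has dimension $6$, and maps finitely onto $\mathcal{T}$; the surface statement here is itself a degeneracy-locus computation, $F_W:=p_1^{-1}([W])\subset F_\lambda$ being the locus where $W\otimes\mathcal{O}\to Q|_{F_\lambda}$ drops rank to $\le 3$, a locus of \emph{excess} dimension over its expected value $1$ (the excess being forced by the vanishing of this bundle map at $[W]$ itself), equal to $2$ for $\lambda$ general.

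Part (ii) is then a by-product of (i) together with the non-emptiness of $F_W$ for general $[W]$: since $\dim I=6$ and $\dim F_\lambda=4$, once $p_1:I\to F_\lambda$ is dominant its general fibre is the surface $F_W$. Non-emptiness follows from the same characteristic-class computation — the relevant class is non-zero, so $F_W\neq\emptyset$ for every $\lambda$ — or else by specialising $\lambda$ to a convenient value.

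For (iii) I would use the description (Debarre--Voisin \cite{debarrevoisin}) of $\sigma:=\sigma_{F_\lambda}$ in terms of $\lambda$: at $[W]\in F_\lambda$ one has $T_{[W]}F_\lambda\subset\mathrm{Hom}(W,V_{10}/W)$, and $\lambda$ induces $\bar\lambda\in\bigwedge^2W^*\otimes(V_{10}/W)^*$ by $(w_1,w_2)\mapsto\lambda(w_1,w_2,-)$ (well defined since $\lambda|_W=0$), from which $\sigma_{[W]}$ arises by a canonical contraction. Fix $(W_1,W_2,W_3)\in T_{\mathrm{reg}}$ with common hyperplane $V_9=W_1+W_2+W_3$. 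Since $p_{12}:T\to I$ is birational and $\widetilde{q}:T\to\mathcal{T}$ is finite, a tangent vector to $T$ at this point is induced by a single infinitesimal motion $\delta\in\mathrm{Hom}(V_9,V_{10}/V_9)$ of the hyperplane, and each $u_i\in T_{[W_i]}F_\lambda$ maps, modulo $\mathrm{Hom}(W_i,V_9/W_i)$, to $\delta|_{W_i}$. The decisive point is that $V_{10}/V_9$ is a \emph{line}, so $\bigwedge^2(V_{10}/V_9)=0$: expanding $\sigma_{[W_i]}(u_i,v_i)$ along the flag $W_i\subset V_9\subset V_{10}$, the contribution of the hyperplane motion $\delta$ alone drops out, and what remains is expressed through $\mu:=\lambda|_{V_9}$ and the $W_i$-internal parts. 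One is then reduced to the identity
$$\sum_{i=1}^{3}\sigma_{[W_i]}(u_i,v_i)=0,$$
which should be forced by $\mu|_{W_1}=\mu|_{W_2}=\mu|_{W_3}=0$ together with $W_1+W_2+W_3=V_9$. This last identity is the real content of (iii) and where I expect the bulk of the work to lie; it has to be proved by an explicit linear-algebra / infinitesimal computation, no argument proceeding through rational equivalence of zero-cycles being available.
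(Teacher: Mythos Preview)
The paper does not give its own proof of this theorem: it is quoted verbatim from Bazhov \cite{bazhov} and used as a black box. So there is no ``paper's proof'' to compare against; I can only assess your outline on its own merits.

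Your strategy for (i) and (ii) --- organising everything around the rational map to $\mathbb{P}(V_{10}^*)$ sending a pair or triple to the hyperplane $V_9$ it spans --- is the natural one and is essentially what Bazhov does. The heart of the matter is indeed the claim that for general $V_9$ in the image one has $\#\Phi(V_9)=3$, and you correctly flag this as the crucial input. But you do not prove it: you assert it, and then derive everything else from it. Likewise the assertion that $F_W=p_1^{-1}([W])$ is a surface (rather than a curve, as its naive expected dimension predicts) is stated but not established; the observation that the bundle map vanishes at $[W]$ itself explains why excess can occur, not why the actual dimension is exactly $2$. Both facts require genuine work --- in Bazhov's paper they come from explicit Schubert/Chern-class computations and a careful genericity argument --- and your write-up would need to supply them.

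For (iii), your instinct that a direct infinitesimal computation is required is correct, and your remark that Proposition~\ref{pronexistepasdanschow} blocks the shortcut via constancy of $t_1+t_2+t_3$ in $\mathrm{CH}_0(F_\lambda)$ is apt (note it does not exclude a Mumford-type argument through an auxiliary Fano $Y$ carrying a $(3,1)$-class, as in Sections~\ref{subsecvarofline} and the EPW case; but no such $Y$ is available here). However, you stop precisely where the proof begins: you set up the flag $W_i\subset V_9\subset V_{10}$, note that $\bigwedge^2(V_{10}/V_9)=0$ kills one term, and then say the remaining identity ``should be forced'' by $\mu|_{W_i}=0$ and $\sum W_i=V_9$. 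This is the entire content of (iii), and it is a nontrivial multilinear computation that you have not carried out. As written, (iii) is a plan, not a proof.
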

These three facts together  say that $T$ is  a triangle variety for $F_\lambda$.
\subsection{Double EPW sextics}
The double EPW sextics $X$ constructed by O'Grady in  \cite{ogrady} are quasi-\'etale double covers of sextic hypersurfaces
in $\mathbb{P}^5$ singular along a surface discovered by Eisenbud-Pospescu-Walter \cite{epw}.
We will follow here the description given by Iliev and Manivel
\cite{maniiliev}, which is very convenient to study subvarieties and relations between zero-cycles of  $X$. More precisely, the
Iliev-Manivel description provides a Fano fourfold  $Y$, such that $X$ parameterizes $1$-cycles in $Y$ and the $(2,0)$-form
on $X$ is induced via the incidence relation from a
 cohomology class of Hodge type $(3,1)$  in $Y$.
 By Proposition \ref{pronexistepasdanschow}, we cannot obtain enough relations
(\ref{eqtriangletexte})  in ${\rm CH}_0(X)$ to construct using Mumford's theorem
triangle varieties in $X$, that is, Lagrangian subvarieties of $X^3$.
 In the present case, and this was also exploited in the
 case of the Fano variety of lines of  a cubic fourfold, it suffices to exhibit relations between the corresponding
$1$-cycles in ${\rm CH}_1(Y)$.

The Iliev-Manivel construction is as follows. Let $V_5$ be a
$5$-dimensional vector space and let
$\mathbb{G}:=G(2,5)\subset\mathbb{P}^9$. Let  $Y\subset \mathbb{G}$  be the generic  complete intersection of
a  linear Pl\"ucker  section $H\subset \mathbb{G}$ and a quadratic Pl\"ucker  section $Q$  of  $\mathbb{G}$.
The fourfold $Y$ is Fano of index $2$ with Picard number $1$ and its variety of conics
$\mathcal{H}_{2,0}$  is $5$-dimensional. It is fibered into
$\mathbb{P}^1$'s, because if $C\subset \mathbb{G}$ is a conic, there exists a hyperplane $V_4\subset V_5$
such that $C\subset G(2,V_4)$. (Indeed, the surface in $\mathbb{P}(V_5)$ swept-out by lines parameterized by $C$ has degree
$2$,  hence is contained in some $\mathbb{P}(V_4)\subset \mathbb{P}(V_5)$.) Thus $C$ is contained in
the del Pezzo surface $\Sigma=H\cap Q\cap G(2,V_4)$ which has index $1$ and degree $4$.
But then $C$ moves in a pencil in $\Sigma$.
Next Iliev and Manivel show that
$Y$ has a $(3,1)$-form $\eta_Y\in H^{3,1}(Y)$ and
considering the incidence diagram given by
the universal conic

 \begin{eqnarray}\label{numerodiagpourincidenceilma}
  \xymatrix{
&\mathcal{C}\ar[r]^{q}\ar[d]^{p}&Y\\
&\mathcal{H}_{0,2}&},
\end{eqnarray}
they  show that the $(2,0)$-form $p_*q^*\eta$ has generic rank $4$ on $\mathcal{H}_{0,2}$. It follows that
the base of the  MRC fibration of $\mathcal{H}_{0,2}$ is $4$-dimensional, with fibers given by the
$\mathbb{P}^1$'s described above. Finally, it is shown in \cite{maniiliev}  that this base is birational to
a general double EPW sextic $X$.

This construction   is very convenient  to exhibit
Lagrangian subvarieties
in $X^l$ and, for $l=1$, this is already done in \cite{maniiliev}. For example, the variety of conics
contained in a general hyperplane section $Y'\subset Y$ is $3$-dimensional and its image
 in $X$ is a Lagrangian surface
constructed in \cite{maniiliev}. This follows from the fact that the class $\eta$ vanishes on $Y'$ and
that the pull-back of $\sigma_X$ to $\mathcal{H}_{0,2}$ is defined as
$p_*(q^*\eta)$. We now explain how to use this description of $X$ to produce a triangle variety for
$X$.

First of all, we observe that nondegenerate  rational curves of degree $4$ on $Y$
are parameterized by
a $9$-dimensional variety $\mathcal{H}_{0,4}$, while nondegenerate  elliptic curves of degree
$6$ are parameterized by a $12$-dimensional variety $\mathcal{H}_{1,6}$. Furthermore, there is
a dominant rational map
\begin{eqnarray}\label{eqdomPhi}
\Phi: \mathcal{H}_{1,6}\dashrightarrow  \mathcal{H}_{0,4}
\end{eqnarray}
with general fiber $\mathbb{P}^3$. This map is obtained by liaison. Indeed, a
nondegenerate rational curve
$C$  of degree $4$ on $Y$ has
$h^0(C,\mathcal{O}_C(1))=5$ and the restriction map
$H^0(Y,\mathcal{O}_Y(1))\rightarrow H^0(C,\mathcal{O}_C(1))$
is surjective, hence has a $4$-dimensional kernel.
As $C$ is general, $C$ is defined in $Y$ by linear Pl\"ucker equations.
Thus, taking three general equations $\sigma_1,\sigma_2,\sigma_3$ vanishing on $C$, the
locus  defined by these $3$ equations is a curve of degree $10$ that contains $C$ and
is the union of
$C$ and an elliptic curve of degree $6$. Conversely,
starting
from a nondegenerate elliptic curve $E$ of degree $6$, we have $h^0(E,\mathcal{O}_E(1))=6$, and
 the restriction map
$H^0(Y,\mathcal{O}_Y(1))\rightarrow H^0(E,\mathcal{O}_E(1))$
is surjective, hence has a $3$-dimensional kernel. The locus defined by this $3$-dimensional set of
linear  Pl\"ucker equations is a curve of degree $10$ containing $E$ and is in fact the union of $E$
and a residual rational curve of degree $4$.

There is a $4$-dimensional (or codimension $1$)
family $\Gamma_4\subset \mathcal{H}_{0,2}$ of conics in $Y$ (which must be contracted
to a surface in $X$),
which is constructed as follows.
Consider  the variety $Z:=H\cap \mathbb{G}$ and its variety of planes
$P\subset Z$. The equation defining $H$
is a $2$-form $\omega\in \bigwedge^2V_5^*$. It is well-known that a  plane
in $\mathbb{G}$ corresponds to a point
$x\in\mathbb{P}(V_5)$ together with a
$\mathbb{P}(V_4)\subset\mathbb{P}(V_5)$ passing through  $x$ and  defining the plane
$P$ of lines in $\mathbb{P}(V_4)$ passing through $x$. This plane is contained in $Z$ when
$V_4$ is contained in $x^{\perp\omega}$, which provides the desired $4$-dimensional family
(parameterized birationally by the choice of $x\in \mathbb{P}(V_5)$).
Any such a plane $P$ determines a conic
$C=P\cap Y$ in $Y$ (or is contained in $Y$, but this does not happen for generic $Y$). This provides us with a rational $4$-dimensional subvariety
$$ \Gamma_4\subset \mathcal{H}_{0,2}.$$
It is obvious that the subvariety of $X$ we get this way is Lagrangian for $\sigma_X$, because
it is dominated by the rational
variety $\Gamma_4$.

We now make the following construction. Inside $\Gamma_4\times \Gamma_4$, there is a
$6$-dimensional subvariety $\Gamma_6$ consisting of pairs of intersecting  conics.
We observe that $\Gamma_6$ maps naturally to $\mathcal{H}_{0,4}$, via the $2$ to $1$ map which
associates to a pair of intersecting conics  the rational curve of degree $4$
 which is the union of the two conics.
This way we get a $6$-dimensional variety parameterizing  degree $4$ rational curves in $Y$, and applying
the residual construction explained previously, we get a
$9$-dimensional subvariety $\Gamma_9^1$ of $\mathcal{H}_{1,6}$.

Let now $\mathcal{T}\subset \mathcal{H}_{0,2}\times \mathcal{H}_{0,2}\times \mathcal{H}_{0,2}$ be the
set of triples of conics $(C_1,\,C_2,\,C_3)$ in $Y$, intersecting
each other (a triangle of conics), and such that the
singular elliptic curve $E=C_1\cup C_2\cup C_3$ is a member of the family parameterized by
$\Gamma_9^1$.
\begin{theo}\label{theotriangleepw} For general $Y$, the image $T$ of $\mathcal{T}$ in $X^3$ is a triangle variety.
\end{theo}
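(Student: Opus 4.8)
The plan is to verify the four requirements of a triangle variety (Definition~\ref{defitriangle}): $\dim T=3n=6$, surjectivity onto each of the three factors $X$, generic finiteness of $T$ onto its image in each product $X\times X$, and the Lagrangian condition for ${\rm pr}_1^*\sigma_X+{\rm pr}_2^*\sigma_X+{\rm pr}_3^*\sigma_X$. The Lagrangian condition is the essential point and, exactly as for the Fano variety of lines in Section~\ref{subsecvarofline}, it will follow from Mumford's theorem \cite{mumford} once one knows that the $1$-cycle $[C_1]+[C_2]+[C_3]\in{\rm CH}_1(Y)$ is constant along $\mathcal{T}$. Indeed $\sigma_X$ is, on $\mathcal{H}_{0,2}$, equal to $p_*(q^*\eta_Y)$ with $\eta_Y\in H^{3,1}(Y)$; forming the incidence variety of the three conics of a member of $\mathcal{T}$ over $Y$, one sees that $\sum_i{\rm pr}_i^*\sigma_X$ restricts on $\mathcal{T}$ to the $2$-form induced by $\eta_Y$ through this incidence. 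If the associated morphism $\mathcal{T}\to{\rm CH}_1(Y)$, $(C_1,C_2,C_3)\mapsto[C_1]+[C_2]+[C_3]$, is constant, then by Mumford's theorem this $2$-form vanishes on a desingularisation of $\mathcal{T}$, hence on $T_{{\rm reg}}$, since $\mathcal{T}$ dominates $T$ and pull-back of holomorphic forms along dominant morphisms is injective.

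To prove the constancy of $[C_1]+[C_2]+[C_3]$ I would use liaison. By definition of $\Gamma_9^1$ (read backwards through the map $\Phi$), for $(C_1,C_2,C_3)\in\mathcal{T}$ the curve $E=C_1\cup C_2\cup C_3\in\Gamma_9^1$ is residual, inside a codimension $3$ linear section $\Sigma$ of $Y$, to a degree $4$ curve $D_E=C\cup C'$, where $C$ and $C'$ are two intersecting conics belonging to the family $\Gamma_4$. Hence in ${\rm CH}_1(Y)$
$$[C_1]+[C_2]+[C_3]=[E]=[\Sigma]-[C]-[C'],$$
and $[\Sigma]$ does not depend on $\Sigma$, because codimension $3$ linear sections of $Y$, being parametrised by an open subset of a Grassmannian, move in a rationally connected family of $1$-cycles. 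Finally the conics of $\Gamma_4$ all have the same class in ${\rm CH}_1(Y)$: by construction $\Gamma_4$ is parametrised birationally by $\mathbb{P}(V_5)\cong\mathbb{P}^4$ (the conic $C_P=P\cap Y$ being attached to $x\in\mathbb{P}(V_5)$ through the plane $P$ of lines through $x$ in the unique $\mathbb{P}(x^{\perp\omega})$), and a family of $1$-cycles over a rationally connected base has rationally equivalent fibres. Therefore $[C_1]+[C_2]+[C_3]$ is a fixed $1$-cycle, and the Lagrangian property follows.

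For the remaining, more classical statements one argues by dimension counts in the incidence varieties. Triangles of conics in $Y$ form a $9$-dimensional family inside $\mathcal{H}_{1,6}$ (choose $C_1$, then $C_2$ meeting $C_1$, then $C_3$ meeting both: $5+3+1$ parameters), and $\Gamma_9^1$ is $9$-dimensional inside the $12$-dimensional $\mathcal{H}_{1,6}$, so the two meet in the expected dimension $6$; this value is attained because the residuation construction actually produces triangles of conics lying over $\Gamma_6$. As moving any one component of a general triangle of $\mathcal{T}$ within its pencil destroys either an incidence condition or the condition $E\in\Gamma_9^1$, the map $\mathcal{T}\to T$ is generically finite, so $\dim T=6=3n$. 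The three defining conditions of $\mathcal{T}$ are symmetric in $(C_1,C_2,C_3)$, so the surjectivity of all three projections $T\to X$ reduces to that of ${\rm pr}_1$, i.e.\ to the fact that a sufficiently general conic of $Y$ can be completed to a triangle of conics whose union lies in $\Gamma_9^1$, again a parameter count using $\Phi$ and $\Gamma_6$. Generic finiteness of $T$ onto its image in $X\times X$ comes from the fact that, $(C_1,C_2)$ being fixed, only finitely many $C_3$ complete them to a member of $\mathcal{T}$.

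The main obstacle is to make these incidence counts rigorous, i.e.\ to control the degenerate-curve geometry for general $Y$: one must verify that $\Gamma_9^1$ meets the locus of triangles of conics without excess component, that the conics and curves involved span the linear spaces expected of them so that the liaison map $\Phi$ and the residuation are well defined along $\mathcal{T}$, and that the conics of a general member of $\mathcal{T}$ are general enough in $\mathcal{H}_{0,2}$ for the fibre dimensions of $\mathcal{T}\to T$, of $T\to X$ and of $T\to X\times X$ to be as claimed. The cycle-theoretic heart of the proof — the constancy of $[C_1]+[C_2]+[C_3]$ in ${\rm CH}_1(Y)$ — is robust and follows the pattern of the cubic fourfold case, but the verification of these transversality and genericity statements is where the real work lies.
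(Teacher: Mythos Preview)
Your approach is essentially the same as the paper's. Both argue that the Lagrangian condition follows from Mumford's theorem once one knows $[C_1]+[C_2]+[C_3]=[E]$ is constant in ${\rm CH}_1(Y)$, and both obtain this constancy via liaison: $E$ is residual to $C\cup C'$ with $C,C'\in\Gamma_4$, the complete intersection class is constant, and the conics of $\Gamma_4$ are all rationally equivalent because they (or the planes cutting them) are parametrised by the rational variety $\mathbb{P}(V_5)$. The paper phrases this last step as ``all the planes contained in $Z$ are rationally equivalent in $Z$'', which is your argument seen one step earlier. For the remaining properties (dimension $6$, surjectivity onto each factor, generic finiteness onto $X\times X$), the paper simply writes ``We leave to the reader checking the dimension count for general $Y$\ldots'', so your sketch of the incidence counts and your honest acknowledgement that the transversality verifications are where the real work lies are entirely in line with --- indeed more detailed than --- what the paper provides.
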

\begin{proof}  The triples $(C_1,\,C_2,\,C_3)$ of conics in $Y$ parameterized by $\mathcal{T}$ have the property
that the singular elliptic curve
$E=C_1\cup C_2\cup C_3\subset Y$ is residual in $Y$ to a rational  curve of degree $4$ which is the union of two
conics $C_4,\,C_5$ meeting at one point, where $C_4$ and $C_5$ are cut on $Y$ by planes in $Z$. All the planes contained in  $Z$ are rationally
equivalent in $Z$, so we conclude that the elliptic curves $E$ parameterized by $\Gamma_9^1$  are all rationally equivalent. By
\cite{maniiliev}, the holomorphic $2$-form on $X$ pulls-back to a holomorphic $2$-form
$\widetilde{\sigma}_X$ on
$\mathcal{H}_{0,2}$ which is induced from a cohomology class of type $(3,1)$ on $Y$ via the incidence correspondence.
Mumford's theorem \cite{mumford} implies that ${\rm pr}_1^*\widetilde{\sigma}_X+{\rm pr}_2^*\widetilde{\sigma}_X+{\rm pr}_3^*\widetilde{\sigma}_X$ vanishes on $\mathcal{T}$, hence equivalently that
${\rm pr}_1^*{\sigma}_X+{\rm pr}_2^*{\sigma}_X+{\rm pr}_3^*{\sigma}_X$ vanishes on ${T}$.
We leave to the reader checking   the dimension count for general $Y$ and the fact that $T$ dominates
$X$ by the various projections and is generically finite on its image in $X\times X$ by the various projections.
\end{proof}
\begin{rema}{\rm The method described in the next section and the existence of a covering of $X$
by a family of  Lagrangian surfaces given in \cite{maniiliev}  can also be used to construct triangle varieties for $X$, see
Theorem \ref{theotrianglecoverlag}.}
\end{rema}
\subsection{Lagrangian fibrations\label{secjacfib} and Lagrangian coverings}
Let $\phi:X\rightarrow B$ be a projective  Lagrangian fibration on a hyper-K\"ahler manifold of dimension $2n$. Recall from
Lin's paper \cite{lin} that $\phi$ has a Lagrangian constant cycle  multisection $\widetilde{B}$.  By base change from $B$ to $\widetilde{B}$, we get (possibly after desingularization)  an induced fibration $\widetilde{X}\rightarrow \widetilde{B}$ which has a  section, hence is (over a dense open set of $\widetilde{B}$) a family of abelian varieties.
Let $\widetilde{I}:=\widetilde{X}\times_{\widetilde{B}}\widetilde{X}\subset \widetilde{X}\times \widetilde{X}$. Using the relative addition map, we get a rational
map $\mu:\widetilde{I}\dashrightarrow \widetilde{X}$ and finally we define $T$ as the image of
$\widetilde{I}$ in $X\times X\times X$ under the rational  map  $(r\circ {\rm pr}_1,r\circ {\rm pr}_2,r\circ -\mu)$ where $r:\widetilde{X}\rightarrow X$
is the natural map and the ${\rm pr}_i$'s are the projections from $\widetilde{X}\times \widetilde{X}$ to $ \widetilde{X}$, restricted to $\widetilde{I}$.
\begin{prop}\label{triangalelagrangien} The variety $T$ is a triangle variety.
\end{prop}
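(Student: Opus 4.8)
The plan is to verify the four defining properties of a triangle variety for the variety $T$ produced by the construction: that $T\subset X^3$ has dimension $3n$, that the three projections $T\to X$ are dominant, that the three projections $T\to X\times X$ are generically finite onto their images, and that $T$ is Lagrangian for ${\rm pr}_1^*\sigma_X+{\rm pr}_2^*\sigma_X+{\rm pr}_3^*\sigma_X$. All of these will be read off from the birational geometry of $\widetilde I=\widetilde X\times_{\widetilde B}\widetilde X$ together with the behaviour of the pulled-back $2$-form $\widetilde\sigma:=r^*\sigma_X$ on $\widetilde X$.

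First I would record the easy geometric properties. Since $\widetilde X\to\widetilde B$ has $n$-dimensional generic fibre and $\dim\widetilde B=n$, the fibre product $\widetilde I$ has dimension $3n$; as $r$ is generically finite, so is $r\times r$, hence $(r\times r)|_{\widetilde I}$ is generically finite onto its image, which therefore has dimension $3n$. Consequently $T=\overline{f(\widetilde I)}$ (where $f=(r\circ{\rm pr}_1,r\circ{\rm pr}_2,r\circ-\mu)$) has dimension $3n$ and $f\colon\widetilde I\dashrightarrow T$ is generically finite. The key observation is that each of the three coordinate pairs $({\rm pr}_1,{\rm pr}_2)$, $({\rm pr}_1,-\mu)$, $({\rm pr}_2,-\mu)$ is, via the relative group law, a birational automorphism of $\widetilde I$ over $\widetilde B$ --- for instance $(x,y)\mapsto(x,-(x+y))$ has inverse $(u,v)\mapsto(u,-v-u)$ --- so each projection $p_{ij}\circ f$ is a birational automorphism of $\widetilde I$ followed by the generically finite map $(r\times r)|_{\widetilde I}$, which forces $T\to p_{ij}(T)$ to be generically finite. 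Dominance of the three projections $T\to X$ follows because ${\rm pr}_1,{\rm pr}_2\colon\widetilde I\to\widetilde X$ and the relative sum $m\colon\widetilde I\to\widetilde X$ are all surjective (fibrewise, addition on an abelian variety is surjective) and $r$ is surjective.

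The heart of the proof is the Lagrangian property, which I would deduce from the local normal form of a Lagrangian fibration near a general fibre. The form $\widetilde\sigma=r^*\sigma_X$ is closed and holomorphic, restricts to $0$ on the fibres of $\widetilde X\to\widetilde B$ (they map to Lagrangian fibres of $X\to B$), and has Lagrangian zero section $e$ (its image under $r$ is Lin's constant cycle multisection, which is Lagrangian, so $e^*\widetilde\sigma=0$). By the holomorphic Liouville--Arnold (action--angle) description, over a dense open subset of $\widetilde B$ one identifies $\widetilde X$ with $T^*\widetilde B/\Lambda$ carrying $\widetilde\sigma$ to the canonical form $\sum da_i\wedge d\theta_i$ and $e$ to the zero section, the relative group law becoming fibrewise addition $\theta\mapsto\theta+\theta'$. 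In these coordinates one reads off immediately
\[
m^*\widetilde\sigma={\rm pr}_1^*\widetilde\sigma+{\rm pr}_2^*\widetilde\sigma\ \text{ on }\widetilde I,\qquad \iota^*\widetilde\sigma=-\widetilde\sigma\ \text{ on }\widetilde X,
\]
where $\iota$ is the relative inverse. Since the third component of $f$ is $r\circ(-\mu)=r\circ\iota\circ m$, we get $f^*{\rm pr}_3^*\sigma_X=(\iota\circ m)^*\widetilde\sigma=m^*(-\widetilde\sigma)=-({\rm pr}_1^*\widetilde\sigma+{\rm pr}_2^*\widetilde\sigma)$, whereas $f^*{\rm pr}_1^*\sigma_X={\rm pr}_1^*\widetilde\sigma$ and $f^*{\rm pr}_2^*\sigma_X={\rm pr}_2^*\widetilde\sigma$; adding the three gives $f^*({\rm pr}_1^*\sigma_X+{\rm pr}_2^*\sigma_X+{\rm pr}_3^*\sigma_X)=0$ on a smooth model of $\widetilde I$. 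As $f$ is dominant, this forces $({\rm pr}_1^*\sigma_X+{\rm pr}_2^*\sigma_X+{\rm pr}_3^*\sigma_X)|_{T_{\rm reg}}=0$, and together with the properties established above, $T$ is a triangle variety.

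I expect the main technical point to be the identity $m^*\widetilde\sigma={\rm pr}_1^*\widetilde\sigma+{\rm pr}_2^*\widetilde\sigma$: it encodes exactly that the symplectic form of the Lagrangian fibration is the canonical one on $T^*\widetilde B$ once a Lagrangian section is fixed, and the cleanest route is the action--angle normal form; alternatively one can argue that $\widetilde\sigma$ lies in $\pi^*\Omega^1_{\widetilde B}\wedge\Omega^1_{\widetilde X}$ with relative part given by fibrewise-invariant $1$-forms, for which additivity under $m$ is the standard additivity of invariant differentials on an abelian scheme. A secondary, purely bookkeeping issue is to carry the argument out on suitable smooth projective models of $\widetilde X$, $\widetilde I$ and $T$, extending the $2$-forms and the rational maps; this is routine, since holomorphic $2$-forms extend across smooth birational models.
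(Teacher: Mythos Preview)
Your proof is correct and rests on the same three ingredients as the paper's: the form $\widetilde\sigma$ vanishes along fibres, invariant $1$-forms on the abelian fibres are additive under the group law, and the section $\widetilde B\hookrightarrow\widetilde X$ is Lagrangian. The only difference is in packaging. You invoke the holomorphic action--angle normal form $\widetilde\sigma=\sum da_i\wedge d\theta_i$ to read off $m^*\widetilde\sigma={\rm pr}_1^*\widetilde\sigma+{\rm pr}_2^*\widetilde\sigma$ and $\iota^*\widetilde\sigma=-\widetilde\sigma$ at once; the paper argues intrinsically with the Leray filtration, observing that $\widetilde\sigma\in F^1\Omega^2_{\widetilde X}$, checking the desired identity in $F^1/F^2\cong\tilde\phi^*\Omega_{\widetilde B}\otimes\Omega_{\widetilde X/\widetilde B}$ via additivity of invariant differentials, so that the discrepancy lies in $\tilde\phi^*\Omega^2_{\widetilde B}$ and is killed by restriction to the Lagrangian zero section. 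Your ``alternative'' in the last paragraph is precisely the paper's argument. Your treatment of the generic-finiteness of the three projections $T\to X\times X$ via the birational involutions $(x,y)\mapsto(x,-x-y)$ of $\widetilde I$ is a bit more explicit than the paper's, which simply notes that the image is $X\times_B X$.
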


\begin{proof} As $T$ is the union over $t\in \widetilde{B}$ of the graphs of the sum map in the fibers $X_b$, it is clear that $T$ dominates $X$ by the three projections and
 maps in a generically finite way to the products  $X\times X$ of any two factors (the image is $X\times_BX$ but the map
 is not birational because of the base change $\widetilde{B}\rightarrow B$). We want to prove that ${\rm pr}_1^*\sigma_X+{\rm pr}_2^*\sigma_X+{\rm pr}_3^*\sigma_X=0$ on $T_{\rm reg}$, or, equivalently \begin{eqnarray}\label{eqamontrer} {\rm pr}_1^*\sigma_{\widetilde{X}}+{\rm pr}_2^*\sigma_{\widetilde{X}}=\mu^*\sigma_{\widetilde{X}}\end{eqnarray}
on $\widetilde{I}$, where
$\sigma_{\widetilde{X}}:=r^*\sigma_X$. As $\tilde{\phi}:\widetilde{X}_{\rm reg}\rightarrow \widetilde{B}_{\rm reg}$ is a Lagrangian fibration  with respect to $\sigma_{\widetilde{X}}$,
$\sigma_{\widetilde{X}|\widetilde{X}_{\rm reg}}\in H^0(\widetilde{X}_{\rm reg},F^1\Omega_{\widetilde{X}_{\rm reg}}^2)$, where $ F^1\Omega_{\widetilde{X}^2_{\rm reg}}:= \tilde{\phi}^*\Omega_{\widetilde{B}}\wedge\Omega_{\widetilde{X}_{\rm reg}}$. Let
$F^2\Omega^2_{\widetilde{X}_{\rm reg}}=\tilde{\phi}^*\Omega_{\widetilde{B}}^2$. The quotient bundle $F^1\Omega_{\widetilde{X}^2_{\rm reg}}/F^2\Omega_{\widetilde{X}^2_{\rm reg}}$ is isomorphic to $\tilde{\phi}^*\Omega_{\widetilde{B}}\otimes\Omega_{\widetilde{X}_{\rm reg}/\widetilde{B}}$. We have \begin{eqnarray}\label{eqamontrerpresque}{\rm pr}_1^*\sigma_{\widetilde{X}}+{\rm pr}_2^*\sigma_{\widetilde{X}}=\mu^*\sigma_{\widetilde{X}}
\end{eqnarray}
in $H^0(\widetilde{I}_{\rm reg},F^1\Omega_{\widetilde{I}^2_{\rm reg}}/F^2\Omega_{\widetilde{I}^2_{\rm reg}})$ because on the fibers $\widetilde{X}_b$, we have
$$\mu^*\alpha={\rm pr}_1^*\alpha+{\rm pr}_2^*\alpha$$
for any $\alpha\in H^0(\widetilde{X_b},\Omega_{\widetilde{X_b}})$,
so that
$$\mu^*={\rm pr}_1^*+{\rm pr}_2^*: \mu^*\Omega_{\widetilde{X}_{\rm reg}/\widetilde{B}}\rightarrow
 \Omega_{\widetilde{I}_{\rm reg}/\widetilde{B}}. $$
It follows from (\ref{eqamontrerpresque}) that
${\rm pr}_1^*\sigma_{\widetilde{X}}+{\rm pr}_2^*\sigma_{\widetilde{X}}-\mu^*\sigma_{\widetilde{X}}\in H^0(\widetilde{I}_{\rm reg},\tilde{\phi}^*\Omega_{\widetilde{B}_{\rm reg}}^2)\subset H^{2,0}(\widetilde{I}_{\rm reg})$, which gives an equality of $2$-forms on
$\widetilde{I}_{\rm reg}$
\begin{eqnarray}\label{eqamontrerpresquepresque}{\rm pr}_1^*\sigma_{\widetilde{X}}+{\rm pr}_2^*\sigma_{\widetilde{X}}-\mu^*
\sigma_{\widetilde{X}}
=\tilde{\phi}^*\eta
\end{eqnarray}
for some $\eta\in H^0(\widetilde{B},\Omega^2_{\widetilde{B}_{\rm reg}})$.
On the other hand, recall that the multisection
$\widetilde{B}$ of $\phi$, or $0$-section
$\widetilde{B}$ of $\tilde{\phi}$, was chosen to be Lagrangian
for $\sigma_X$ (or equivalently $ \sigma_{\widetilde{X}}$). Restricting (\ref{eqamontrerpresquepresque}) to the $0$-section $\widetilde{B}$, we then conclude that $\eta=0$, which proves (\ref{eqamontrer}).
\end{proof}
Let us say that a hyper-K\"ahler manifold $X$ has a Lagrangian covering if
there exists a diagram
\begin{eqnarray}\label{numerodiagpourlacover}
  \xymatrix{
&\mathcal{L}\ar[r]^{\Phi}\ar[d]^{\pi}&X\\
&B&}
\end{eqnarray}
where $\mathcal{L}$ and $B$ are smooth projective varieties, the morphism
$\Phi$ is surjective and maps  birationally  the general
fiber ${L}_t,\,t\in B$, of $\pi$ to a (possibly singular) Lagrangian
subvariety of $X$, and furthermore, the following condition holds.
 As $\Phi(L_t)$ is Lagrangian, one has a
 natural morphism of coherent sheaves
 $$\lrcorner\sigma_X: N_{L_t/X}\rightarrow \Omega_{L_t}$$
 which is a generic isomorphism, and  induces a morphism at the level of global sections
 $$H^0(L_t, N_{L_t/X})\rightarrow H^0(L_t,\Omega_{L_t}).$$
 We ask that for $t\in B$ generic, the composite map
 $$T_{B,t}\rightarrow  H^0(L_t, N_{L_t/X})\stackrel{\lrcorner\sigma_X}{\rightarrow }H^0(L_t,\Omega_{L_t})$$
where the first map is the classifying map, is an isomorphism. In particular ${\rm dim}\,B=h^{1,0}({L}_t)=:g$.
This  condition is satisfied by unobstructedness results for deformations of Lagrangian
submanifolds (see \cite{voisinlag})  if,  for general $t\in B$, the fiber
${L}_t$ is isomorphic via $\Phi$ to a smooth Lagrangian subvariety of $X$, and $\mathcal{L}\rightarrow
B$
identifies near $t$ to the universal family
 of deformations of ${L}_t$ in $X$.
 For singular Lagrangian subvarieties, the deformation theory is
 not well understood.
Note that, with the hypotheses above, the surjectivity of $\Phi$ has the following interpretation.
\begin{lemm} \label{lealblag} The surjectivity of $\Phi$ is equivalent to the fact   that the Albanese
map ${\rm alb}_{L_t}:L_t\rightarrow {\rm Alb}\,L_t$ is generically finite on its image for general $t$.
\end{lemm}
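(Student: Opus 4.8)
The plan is to compute the differential of $\Phi$ at a general point of $\mathcal{L}$ and to identify its cokernel with the cokernel of the evaluation map of holomorphic $1$-forms on $L_t$. Since $\mathcal{L}$ is projective, $\Phi$ is surjective if and only if it is dominant, i.e. if and only if $d\Phi_x\colon T_{\mathcal{L},x}\to T_{X,\Phi(x)}$ is surjective for $x$ general in $\mathcal{L}$, equivalently for $t\in B$ general and $x\in L_t$ general. So the whole statement reduces to analysing this differential.

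First I would fix $t\in B$ general and $x\in L_t$ a general, hence smooth, point, so that $\pi$ is smooth at $x$ and $\Phi$ maps a neighbourhood of $x$ in $L_t$ isomorphically onto the smooth Lagrangian $\Phi(L_t)\subset X$. Smoothness of $\pi$ at $x$ gives an exact sequence $0\to T_{L_t,x}\to T_{\mathcal{L},x}\to T_{B,t}\to 0$, and $d\Phi_x$ restricted to $T_{L_t,x}$ is the inclusion of the tangent space to the Lagrangian leaf, of dimension $n$. Hence $d\Phi_x$ is surjective if and only if the induced map $\overline{d\Phi_x}\colon T_{B,t}\cong T_{\mathcal{L},x}/T_{L_t,x}\to T_{X,\Phi(x)}/T_{L_t,x}=N_{L_t/X,x}$ is surjective; and by the very definition of the classifying map, $\overline{d\Phi_x}$ is the composition of the classifying map $T_{B,t}\to H^0(L_t,N_{L_t/X})$ with evaluation at $x$.

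Next I would feed in the two properties defining a Lagrangian covering. Because $\lrcorner\sigma_X\colon N_{L_t/X}\to\Omega_{L_t}$ is an isomorphism at the general point $x$, it identifies $N_{L_t/X,x}$ with $\Omega_{L_t,x}$, and, being a morphism of coherent sheaves, it commutes with evaluation at $x$. Therefore, viewed in $\Omega_{L_t,x}$, the map $\overline{d\Phi_x}$ is the composition of $\lrcorner\sigma_X\circ(\text{classifying map})\colon T_{B,t}\to H^0(L_t,\Omega_{L_t})$ with the evaluation map ${\rm ev}_x\colon H^0(L_t,\Omega_{L_t})\to\Omega_{L_t,x}$. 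Since the defining property of a Lagrangian covering is exactly that $\lrcorner\sigma_X\circ(\text{classifying map})$ is an \emph{isomorphism}, we conclude that $\overline{d\Phi_x}$ is surjective if and only if ${\rm ev}_x$ is surjective. Thus $\Phi$ is surjective if and only if ${\rm ev}_x\colon H^0(L_t,\Omega_{L_t})\to\Omega_{L_t,x}$ is surjective for general $t$ and general $x\in L_t$.

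Finally I would invoke the standard description of the Albanese map: $d({\rm alb}_{L_t})_x\colon T_{L_t,x}\to T_{{\rm Alb}\,L_t}=H^0(L_t,\Omega_{L_t})^*$ is the transpose of ${\rm ev}_x$, so ${\rm alb}_{L_t}$ is generically finite onto its image, i.e. $d({\rm alb}_{L_t})_x$ is injective at the general point, if and only if ${\rm ev}_x$ is surjective at the general point. Combined with the previous paragraph this gives the desired equivalence. The only point requiring some care is the possible singularity of $L_t$: one should keep $x$ in the smooth locus throughout and, if $L_t$ is not smooth, replace it by a smooth projective model, which changes neither ${\rm Alb}\,L_t$, nor $H^0$ of holomorphic $1$-forms, nor the generic fibre dimension of $\Phi$. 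I expect this bookkeeping, rather than any substantive difficulty, to be the main thing to monitor.
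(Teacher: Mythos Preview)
Your proof is correct and follows essentially the same approach as the paper: both reduce surjectivity of $\Phi$ to generic surjectivity of the evaluation map $T_{B,t}\otimes\mathcal{O}_{L_t}\to N_{L_t/X}$, then use the generic isomorphism $\lrcorner\sigma_X\colon N_{L_t/X}\to\Omega_{L_t}$ together with the Lagrangian-covering hypothesis to identify this with the evaluation map of holomorphic $1$-forms, which in turn controls the differential of the Albanese map. Your pointwise presentation is slightly more explicit, but the commutative diagram you implicitly use is exactly the one the paper writes down.
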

\begin{proof} The second property is equivalent to the fact that, for general $t\in B$,
 ${\rm alb}_{L_t}$ has a generically injective differential, or equivalently, that the
 evaluation map
 \begin{eqnarray}\label{eavalpourforme} {\rm ev} : H^0(L_t,\Omega_{L_t})\otimes \mathcal{O}_{L_t}\rightarrow \Omega_{L_t}
 \end{eqnarray}
 is generically surjective on $L_t$.
 The surjectivity of $\Phi$ is equivalent to the fact  that $\Phi$  is submersive generically along
 $L_t$ for general $t$.
As $L_t$ imbeds generically into $X$,  this is well-known to be equivalent to the fact that
the   map
 \begin{eqnarray}\label{eavalpourfnormale} {\rm ev}: T_{B,t}\otimes \mathcal{O}_{L_t}\rightarrow
  N_{L_t/X},
 \end{eqnarray}
 which is the composition of the evaluation map and of  the classifying map $T_{B,t}\rightarrow H^0(L_t, N_{L_t/X})$
 is generically surjective.
 We now use the map
 $$\lrcorner\sigma_X: N_{L_t/X}\rightarrow \Omega_{L_t}$$
 which is a generic isomorphism on $L_t$, and  induces a morphism at the level of global sections which
 composed with the classifying map makes  the
 following diagram commutative
  \begin{eqnarray}\label{diagpournorfor}
  \xymatrix{
& T_{B,t}\otimes \mathcal{O}_{L_t}\ar[d]^{\lrcorner\sigma_X}\ar^{\,\,\,\,\,\,\,\,\,\,\,\,\,\,{\rm ev}}[r]&N_{L_t/X}\ar[d]^{\lrcorner\sigma_X}\\
&H^0(L_t,\Omega_{L_t})\otimes \mathcal{O}_{L_t}\ar^{\,\,\,\,\,\,\,\,\,\,\,\,\,\,{\rm ev}}[r]& \Omega_{L_t}},
\end{eqnarray}
  As the first vertical map is by assumption an isomorphism, it follows that
 the generic surjectivity of the evaluation map (\ref{eavalpourfnormale}) is equivalent to   the generic surjectivity of the evaluation map (\ref{eavalpourfnormale})
\end{proof}
We show the following variant of Proposition \ref{triangalelagrangien}.
\begin{theo}\label{theotrianglecoverlag} Let $X$ be a projective hyper-K\"ahler manifold admitting a
Lagrangian covering $\Phi:\mathcal{L}\rightarrow X$. Assume that the general fibers $L_t$
have the property that
the sum map $L_t\times L_t\rightarrow {\rm Alb}\,L_t$ is surjective (in particular
$2n\geq g$).
Assume there exists  a  Lagrangian subvariety  $K\subset X$  such that the general fiber
$\Phi({L}_t)$ intersects $K$  in  a finite (nonzero) number of   points. Then $X$ admits a triangle subvariety.
\end{theo}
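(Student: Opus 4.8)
The plan is to mimic the proof of Proposition~\ref{triangalelagrangien}, using the Albanese varieties $A_t:={\rm Alb}\,L_t$ in place of the abelian fibres of a Lagrangian fibration and the Lagrangian subvariety $K$ in place of the Lagrangian $0$‑section. \emph{Reductions.} Since $\Phi(L_t)\cap K$ is finite and nonempty for general $t$, some component $B'$ of $\Phi^{-1}(K)\subset\mathcal{L}$ dominates $B$, and $B'\to B$ is generically finite; after base change to $B'$ and taking the obvious diagonal section, we may assume there is a section $\kappa:B\to\mathcal{L}$ with $\Phi(\kappa(t))\in K$ for all $t$ (this base change does not alter the image of $\Phi^{\times 3}$ in $X^3$). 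Shrinking $B$ and passing to smooth models, we may assume all $L_t$ are smooth, $\Phi$ embeds the general $L_t$ as a smooth Lagrangian submanifold, and the relative Albanese $a:\mathcal{L}\to\mathcal{A}$ over $B$ is a morphism of smooth varieties with $\mathcal{A}\to B$ a family of $g$‑dimensional abelian varieties whose $0$‑section is $a\circ\kappa$; thus $a_t:=a|_{L_t}:L_t\to A_t$ satisfies $a_t(\kappa(t))=0$ and induces an isomorphism $a_t^\ast:H^0(A_t,\Omega_{A_t})\to H^0(L_t,\Omega_{L_t})$.

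\emph{The candidate.} Let $\widetilde{T}\subset\mathcal{L}\times_B\mathcal{L}\times_B\mathcal{L}$ be the closure of the locus of triples $(l_1,l_2,l_3)$ over a common $t$ with $a_t(l_1)+a_t(l_2)+a_t(l_3)=0$ in $A_t$, and set $T:=\overline{\Phi^{\times 3}(\widetilde{T})}\subset X^3$. Since the sum map $L_t\times L_t\to A_t$ is surjective, so is $L_t^3\to A_t$; hence $\widetilde{T}$ dominates $B$ with general fibre of dimension $3n-g$, so $\dim\widetilde{T}=3n$, and $\widetilde{T}$ (hence $T$) dominates $X$ by the first projection: given general $x=\Phi(l_1)$ with $l_1\in L_t$, pick $l_2,l_3\in L_t$ with $a_t(l_2)+a_t(l_3)=-a_t(l_1)$. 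As the condition defining $\widetilde{T}$ is symmetric in $l_1,l_2,l_3$, the same holds for the other projections and their pairs, and it remains to check that $\Phi^{\times 3}$ is generically finite on $\widetilde{T}$ and that $T$ is generically finite over its image in each product of two factors of $X^3$. This is a routine dimension count, using the inequalities $n\le g\le 2n$ that follow from $\Phi$ surjective and $L_t\times L_t\twoheadrightarrow A_t$, so that for general $(x_1,x_2)$ only finitely many $t$ have $x_1,x_2\in\Phi(L_t)$, whence $l_1,l_2$ and then $l_3$ are finite in number; we leave it to the reader, as in Theorem~\ref{theotriangleepw}.

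\emph{The Lagrangian property.} We must show that $\omega:=\sum_{i=1}^3({\rm pr}_i\circ\Phi\circ{\rm pr}_i)^\ast\sigma_X$, more precisely $\omega:=\sum_{i=1}^3(\Phi\circ{\rm pr}_i)^\ast\sigma_X$, vanishes on a smooth model of $\widetilde{T}$. Because each $\Phi(L_t)$ is Lagrangian, $\Phi^\ast\sigma_X$ vanishes on the fibres of $\pi:\mathcal{L}\to B$, so $\Phi^\ast\sigma_X\in H^0(\mathcal{L},F^1\Omega^2_{\mathcal{L}})$ with $F^1\Omega^2_{\mathcal{L}}=\pi^\ast\Omega_B\wedge\Omega_{\mathcal{L}}$, and likewise $\omega\in H^0(\widetilde{T},F^1\Omega^2_{\widetilde{T}})$. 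Modulo $F^2\Omega^2=\pi^\ast\Omega^2_B$, the form $\Phi^\ast\sigma_X$ is a section of $\pi^\ast\Omega_B\otimes\Omega_{\mathcal{L}/B}$ whose value at general $t$ is exactly the contraction map $T_{B,t}\to H^0(L_t,\Omega_{L_t})$ sending $v$ to the contraction with $\sigma_X$ of the Kodaira–Spencer normal field $\nu_v$ (the standard computation behind the discussion preceding Lemma~\ref{lealblag}); by hypothesis this is an isomorphism, and by the universal property of the Albanese its image consists of the forms $a_t^\ast\alpha$, $\alpha\in H^0(A_t,\Omega_{A_t})$. Hence on the fibre $L_t^3$ the class of $\omega$ modulo $F^2$ equals $\sum_\beta db_\beta\otimes\sum_{i=1}^3{\rm pr}_i^\ast a_t^\ast\alpha_\beta=\sum_\beta db_\beta\otimes s_t^\ast\alpha_\beta$, where $s_t:L_t^3\to A_t$ is $(l_1,l_2,l_3)\mapsto a_t(l_1)+a_t(l_2)+a_t(l_3)$ and we used $\mu_{A_t}^\ast\alpha=\sum_i{\rm pr}_i^\ast\alpha$ for translation‑invariant forms on $A_t$. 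Since $s_t\equiv 0$ on $\widetilde{T}_t=s_t^{-1}(0)$, the forms $s_t^\ast\alpha_\beta$ vanish there, so $\omega$ modulo $F^2$ vanishes on $\widetilde{T}$; thus $\omega|_{\widetilde{T}}=\pi^\ast\eta$ for a $2$‑form $\eta$ on $B$ (a section of $\pi^\ast\Omega^2_B$ is determined by its restriction to any section of $\pi$). Taking the section $(\kappa,\kappa,\kappa):B\to\widetilde{T}$ — which lands in $\widetilde{T}$ because $3a_t(\kappa(t))=0$ — gives $\eta=3(\Phi\circ\kappa)^\ast\sigma_X=0$, since $\Phi\circ\kappa$ factors through the Lagrangian subvariety $K$. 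Therefore $\omega|_{\widetilde{T}}=0$, and $T$ is a triangle variety.

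\emph{Main obstacle.} The delicate point is the identification of the ``mixed part'' of $\Phi^\ast\sigma_X$ modulo $F^2$ with the contraction map $T_{B,t}\to H^0(L_t,\Omega_{L_t})$ provided by the hypothesis, together with the bookkeeping needed to replace possibly singular Lagrangian fibres by smooth models without losing the Albanese description of $H^0(L_t,\Omega_{L_t})$ and the identity $\mu_{A_t}^\ast\alpha=\sum_i{\rm pr}_i^\ast\alpha$; by contrast, the dimension counts ensuring domination and generic finiteness are routine once one has $n\le g\le 2n$.
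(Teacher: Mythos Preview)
Your overall strategy and the Lagrangian argument are essentially the same as the paper's: pass to a generically finite cover of $B$ using $\Phi^{-1}(K)$ to get a section, use the relative Albanese, and verify the vanishing of $\sum_i{\rm pr}_i^*\Phi^*\sigma_X$ on $\widetilde{T}$ by the filtration argument of Proposition~\ref{triangalelagrangien}, killing the $F^2$-part via the Lagrangian section through $K$. That part is fine.

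The gap is in what you dismiss as a ``routine dimension count'': the generic finiteness of the projections $T\to X\times X$. Your argument is that for general $(x_1,x_2)$ only finitely many $t$ satisfy $x_1,x_2\in\Phi(L_t)$; this is exactly Lemma~\ref{lealblag2}, but it applies only to a \emph{general} point of the image of $(\Phi,\Phi):\mathcal{L}\times_B\mathcal{L}\to X\times X$. Now ${\rm pr}_{12}(\widetilde{T})\subset\mathcal{L}\times_B\mathcal{L}$ is the locus of pairs $(l_1,l_2)$ over $t$ with $-a_t(l_1)-a_t(l_2)\in a_t(L_t)$. Since $a_t(L_t)$ has dimension $n$ in the $g$-dimensional $A_t$, when $g>n$ this is a \emph{proper} closed subvariety of $\mathcal{L}\times_B\mathcal{L}$ (of dimension $3n<g+2n$), and there is no reason it should meet the locus where $(\Phi,\Phi)$ has finite fibres. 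So you have not shown that the image of $\widetilde{T}$ in $X\times X$ has dimension $3n$, and hence neither that $T$ is $3n$-dimensional nor that it is generically finite over its images in the products of two factors. The case $g>n$ genuinely occurs: for the covering of $F_1(Y)$ by Fano surfaces of hyperplane sections one has $n=2$, $g=5$.

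The paper handles precisely this point by introducing an integer parameter $N$ and replacing the condition $a_t(l_1)+a_t(l_2)+a_t(l_3)=0$ by $N\bigl(a_t(l_1)+a_t(l_2)+a_t(l_3)\bigr)=0$. The Lagrangian argument goes through unchanged (the $N$-torsion locus of $\mathcal{A}\to B$ is \'etale over $B$, so $s_t^*\alpha$ still vanishes on $\widetilde{T}_{N,t}$). The payoff is that, as $N$ varies, the $N$-torsion translates of $a_t(L_t)$ are Zariski dense in $A_t$; hence for $N$ large, ${\rm pr}_{12}(\widetilde{T}_N)$ contains a general point of $\mathcal{L}\times_B\mathcal{L}$, and then Lemma~\ref{lealblag2} does give the required generic finiteness for $T_N\to X\times X$. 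This density-of-torsion step is the missing idea in your proposal.
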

Note that by the same arguments as above,  the assumption on $K$ will be satisfied by taking $K=L_s$, for general $s$,  assuming that
the fibers $L_t\subset X$ are  smooth Lagrangian, and a general
form $\alpha\in H^0(L_t,\Omega_{L_t})$ has finitely many zeroes.
\begin{proof}[Proof of Theorem \ref{theotrianglecoverlag}] Consider, over the open set $B_{\rm reg}$ of regular values of
$\pi$, the Albanese fibration
$\mathcal{A}\rightarrow B_{\rm reg}$ with fiber ${\rm Alb}\,L_b$ over $b\in B$.
By assumption,  a general   variety $\Phi(L_b)\subset X$ intersects $K$ in finitely
many points, which provides a generically finite cover
$$B_K=\Phi^{-1}(K)\rightarrow B_{\rm reg}$$
parameterizing the pairs $(b,k)$, where $k\in L_b$ is such that $\Phi(k)\in K$.
Denoting by $\mathcal{L}_K$ the fibered product
$\mathcal{L}\times_BB_K$, there is a natural section
$$\sigma:B_K\rightarrow \mathcal{L}_K=\mathcal{L}\times_BB_K,$$
$$(b,k)\mapsto k.$$
We denote by ${\rm alb}_K:\mathcal{L}_{K}\rightarrow \mathcal{A}_K=\mathcal{A}\times_{B_{\rm reg}}B_K$
 the relative Albanese
map defined by the section $\sigma$, so that
$${\rm alb}_K(x)={\rm alb}_{L_b}(x-\sigma(b)),$$
where $b=\pi(x)\in B_K$.
Let now, for any integer $N\not=0$,  $\widetilde{T}_N\subset \mathcal{L}_K\times_{B_K}\mathcal{L}_K\times_{B_K}\mathcal{L}_K$
be defined as
\begin{eqnarray}\label{eqdefTlag} \widetilde{T}:=\{(x,y,z)\in
\mathcal{L}_K\times_{B_K}\mathcal{L}_K\times_{B_K}\mathcal{L}_K,\,
N({\rm alb}_K(x)+{\rm alb}_K(y)+{\rm alb}_K(z))=0
\\
\nonumber
\,\,{\rm in}\,\,{\rm Alb}\,L_b,\,
b:=\pi(x)=\pi(y)=\pi(z)\}.
\end{eqnarray}
The variety $\mathcal{L}_K$ has a morphism
$\Phi_K:\mathcal{L}_K\rightarrow X$ composed from $\Phi$ and the natural map
$\mathcal{L}_K\rightarrow \mathcal{L}$.
We define $T_N$  as the Zariski closure in $X\times X\times X$ of
$(\Phi_K,\Phi_K,\Phi_K)(\widetilde{T}^0_N)$,  where $\widetilde{T}^0_N$ is the union of the  irreducible components
of $\widetilde{T}_N$
dominating $B_K$, and where the point
${\rm alb}_K(x)+{\rm alb}_K(y)+{\rm alb}_K(z) $ is of order exactly  $N$.
It remains to show that $T_N$  has the required properties for large $N$.
First of all, the   proof given for   Proposition \ref{triangalelagrangien} works as well to show
\begin{lemm} One has $({\rm pr}_1^*\sigma_X+{\rm pr}_2^*\sigma_X+{\rm pr}_3^*\sigma_X)_{\mid T_{N,{\rm reg}}}=0$.
Equivalently,
$({\rm pr}_1^*(\Phi_K^*\sigma_X)+{\rm pr}_2^*(\Phi_K^*\sigma_X)
+{\rm pr}_3^*(\Phi_K^*\sigma_X))_{\mid \widetilde{T}_{N,{\rm reg}}}=0$.
\end{lemm}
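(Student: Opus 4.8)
The plan is to run the argument that proves Proposition~\ref{triangalelagrangien}, replacing the Lagrangian fibration by the relative Albanese ${\rm alb}_K\colon\mathcal{L}_K\to\mathcal{A}_K$, the relative addition map $\mu$ by the relative sum $\nu\colon\mathcal{A}_K\times_{B_K}\mathcal{A}_K\times_{B_K}\mathcal{A}_K\to\mathcal{A}_K$, and the Lagrangian zero-section by the section $\sigma\colon B_K\to\mathcal{L}_K$. Set $\varpi:=\Phi_K^*\sigma_X$. Since each $\Phi(L_b)\subset X$ is Lagrangian, $\varpi$ restricts to $0$ on every fibre $L_b$ of $\pi\colon\mathcal{L}_K\to B_K$, so over the locus of smooth fibres — after the desingularizations of \cite{mumford}, as usual — $\varpi$ lies in $F^1\Omega^2_{\mathcal{L}_K}$, where $F^1\Omega^2=\pi^*\Omega_{B_K}\wedge\Omega$ and $F^2\Omega^2=\pi^*\Omega^2_{B_K}$. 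On each $L_b$ the class of $\varpi$ in $F^1/F^2\cong\pi^*\Omega_{B_K}\otimes\Omega_{\mathcal{L}_K/B_K}$ takes values in $\Omega_{B_K,b}\otimes H^0(L_b,\Omega_{L_b})={\rm alb}_{L_b}^*\big(\Omega_{B_K,b}\otimes H^0({\rm Alb}\,L_b,\Omega^1)\big)$, i.e. in pull-backs of translation-invariant forms; one thereby obtains a $2$-form $\rho$ on $\mathcal{A}_K$, lying in $F^1\Omega^2_{\mathcal{A}_K}$ and with fibrewise translation-invariant relative part, such that ${\rm alb}_K^*\rho=\varpi$ (the residual $F^2$-ambiguity, being pulled back from $B_K$, is absorbed into $\rho$). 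Because ${\rm alb}_K\circ\sigma$ is the zero-section $o$ of $\mathcal{A}_K$ while $\Phi_K\circ\sigma$ factors through the Lagrangian subvariety $K$, this forces $o^*\rho=\sigma^*\varpi=(\Phi_K\circ\sigma)^*\sigma_X=0$.

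Next I would compare the three summands on the triple fibre product $\widehat{\mathcal{L}}:=\mathcal{L}_K\times_{B_K}\mathcal{L}_K\times_{B_K}\mathcal{L}_K$, writing $\Sigma:={\rm pr}_1^*\varpi+{\rm pr}_2^*\varpi+{\rm pr}_3^*\varpi$ and $a:=({\rm alb}_K,{\rm alb}_K,{\rm alb}_K)$. Since a translation-invariant $1$-form $\alpha$ on an abelian variety satisfies $\nu^*\alpha={\rm pr}_1^*\alpha+{\rm pr}_2^*\alpha+{\rm pr}_3^*\alpha$, the fibrewise invariance of the relative part of $\rho$ shows that ${\rm pr}_1^*\rho+{\rm pr}_2^*\rho+{\rm pr}_3^*\rho-\nu^*\rho$ lies in $\pi^*\Omega^2_{B_K}$ on $\mathcal{A}_K\times_{B_K}\mathcal{A}_K\times_{B_K}\mathcal{A}_K$; restricting this identity to the zero-section of that triple product, where all four terms become $o^*\rho=0$, forces that residual term to vanish. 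Pulling back by $a$ and using ${\rm alb}_K^*\rho=\varpi$ then yields the exact identity $\Sigma=(\nu\circ a)^*\rho$ on $\widehat{\mathcal{L}}$.

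Finally I would restrict to $\widetilde T_N$. On each of its components, after the generically finite base change implicit in the definition of $\widetilde T^0_N$, the morphism $\nu\circ a$ factors as $\tau\circ\pi$, where $\tau$ is a torsion section of $\mathcal{A}_K\to B_K$ (of order dividing $N$); hence $\Sigma|_{\widetilde T_N}=\pi^*\big(\tau^*\rho\big)=\pi^*\big(o^*t_\tau^*\rho\big)$, with $t_\tau$ the fibrewise translation by $\tau$. As $t_\tau$ is an automorphism over $B_K$ it preserves $F^\bullet\Omega^2_{\mathcal{A}_K}$ and acts trivially on fibrewise-invariant relative forms, so $t_\tau^*\rho-\rho\in\pi^*\Omega^2_{B_K}$; writing $t_\tau^*\rho=\rho+\pi^*\beta$, iterating, and using $t_\tau^{\,{\rm ord}(\tau)}={\rm id}$ gives ${\rm ord}(\tau)\cdot\beta=0$, hence $t_\tau^*\rho=\rho$ and $\tau^*\rho=o^*\rho=0$. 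Thus $\Sigma$ vanishes on $\widetilde T_N$, in particular on $\widetilde T_{N,{\rm reg}}$, for every $N\neq0$, which is the assertion. The step I expect to be the genuine obstacle is this last one: in Proposition~\ref{triangalelagrangien} the relevant combination is literally a pull-back of $\sigma_X$ under an addition map and vanishes because the zero-section is Lagrangian, whereas here the Albanese-sum along $\widetilde T_N$ is only torsion, so one must feed the relation $t_\tau^{\,{\rm ord}(\tau)}={\rm id}$ together with the translation-invariance of the relative part of $\rho$ into the argument to get $\tau^*\rho=o^*\rho$. A secondary, purely technical point is the passage from the (possibly singular, non-abelian) Lagrangian fibres $L_b$ to the abelian scheme $\mathcal{A}_K$ — the construction of $\rho$ with invariant relative part and the compatibility of the $F^\bullet$-filtrations under $a$ — which is where the desingularizations of \cite{mumford} are needed.
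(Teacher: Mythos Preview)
Your argument is correct and follows exactly the strategy the paper indicates: the paper gives no proof of this lemma beyond the sentence ``the proof given for Proposition~\ref{triangalelagrangien} works as well,'' and what you have written is a faithful execution of that strategy, transported from the abelian-scheme fibration $\widetilde X\to\widetilde B$ to the relative Albanese $\mathcal{A}_K\to B_K$.

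Two remarks on the comparison. First, your passage through an auxiliary form $\rho$ on $\mathcal{A}_K$ with ${\rm alb}_K^*\rho=\varpi$ is the right way to make the analogy precise, since here the fibres $L_b$ are not themselves abelian varieties; this step is genuinely implicit in the paper's one-line reference. The Lagrangian-covering hypothesis (the isomorphism $T_{B,t}\stackrel{\sim}{\to}H^0(L_t,\Omega_{L_t})$) is exactly what forces the $F^1/F^2$-part of $\varpi$ to lie in $\Omega_{B_K}\otimes{\rm alb}^*H^0(\mathrm{Alb}\,L_b,\Omega^1)$, so your descent to $\mathcal{A}_K$ is justified. Second, you are right to flag the torsion step as the new point: in Proposition~\ref{triangalelagrangien} the relevant variety $\widetilde I$ is irreducible and contains the zero section, so a single restriction suffices, whereas $\widetilde T_N$ has many components indexed by $N$-torsion and one must show vanishing on each. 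Your iteration argument via $t_\tau^{\,{\rm ord}(\tau)}={\rm id}$ is correct; an equivalent and slightly shorter route is to observe that $[N]^*\rho=N\rho$ (same filtration argument, restricting to the zero section to kill the $F^2$-defect), whence $\rho|_{\mathcal{A}_K[N]}=\frac{1}{N}\big([N]|_{\mathcal{A}_K[N]}\big)^*\rho=\frac{1}{N}(o\circ\pi)^*\rho=0$. Either way, the paper's assertion that the earlier proof ``works as well'' is justified only after supplying this extra step, which you have done.
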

We next observe that, if $X$ is of dimension $2n$, $\widetilde{T}_N$ has  expected
 dimension $3n$, which is the dimension of  a triangle variety. Indeed, let
 $g:={\rm dim}\,B={\rm dim}\,{\rm Alb}\,L_b$. Then, as ${\rm dim}\,L_b=n$,
  $${\rm dim}\,\mathcal{L}_K\times_{B_K}\mathcal{L}_K\times_{B_K}\mathcal{L}_K=g+3n,$$
  while from  (\ref{eqdefTlag}), we see that
    $\widetilde{T}^0_N$ is the inverse image of the $N$-torsion multisection of $\mathcal{A}_K\rightarrow B_K$
    via the sum morphism ${\rm alb}_K\circ {\rm pr}_1+{\rm alb}_K\circ {\rm pr}_2+{\rm alb}_K\circ {\rm pr}_3$, over
    the regular locus $B_K^0$ of $\mathcal{L}_K\rightarrow B_K$.
    Hence the expected codimension of
    $\widetilde{T}_N$ is $g$ and the expected dimension of $\widetilde{T}_N$    is $3n$.
The proof of the theorem concludes with
\begin{lemm} Under the assumptions of the theorem,
$\widetilde{T}^0_N$ is actually of dimension $3n$,  the projections
$\widetilde{T}^0_N\rightarrow \mathcal{L}_K$ are dominant and  the projections
$\widetilde{T}^0_N\rightarrow \mathcal{L}_K\times_{B_K}\mathcal{L}_K$ are generically finite on their images.
\end{lemm}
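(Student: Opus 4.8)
The plan is to establish all three assertions fibrewise over a general point $b\in B_K$, by analysing the \emph{sum map}
$$s_b\colon L_b\times L_b\times L_b\longrightarrow A_b:={\rm Alb}\,L_b,\qquad (x,y,z)\longmapsto {\rm alb}_K(x)+{\rm alb}_K(y)+{\rm alb}_K(z),$$
which, up to the fixed translation coming from the section $\sigma$, is the threefold iterate of the Albanese map $f_b:={\rm alb}_{L_b}\colon L_b\to A_b$. Here ${\rm dim}\,L_b=n$, ${\rm dim}\,A_b={\rm dim}\,B_K=g$, and $2n\ge g$ by hypothesis. Fixing one of the three arguments and using the hypothesis of Theorem \ref{theotrianglecoverlag} that the twofold sum map $L_b\times L_b\to A_b$ is surjective, one sees that $s_b$ is surjective; hence, since ${\rm dim}\,L_b^3=3n$, its general fibre has dimension $3n-g\ge n$, and the jump locus $Z_b:=\{t\in A_b:{\rm dim}\,s_b^{-1}(t)>3n-g\}$ is a proper closed subvariety. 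Assembling these over $B_K$ gives a proper closed subvariety $Z\subsetneq\mathcal{A}_K$.

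For the dimension statement, note first that fibrewise $\widetilde T_N$ is the finite union $\bigcup_{t\in A_b[N]}s_b^{-1}(t)$, so every irreducible component of $\widetilde T_N$ dominating $B_K$ has dimension at least $g+(3n-g)=3n$; this is the easy inequality. For the reverse inequality we use that $N$ may be taken large: since $\bigcup_N\mathcal{A}_K[N]$ is Zariski dense in $\mathcal{A}_K$ and $Z$ is a proper closed subset, there is an $N$ for which some component $\mathcal N$ of the $N$-torsion multisection $\mathcal{A}_K[N]\to B_K$ — necessarily dominating $B_K$ — is not contained in $Z$; after replacing $N$ by the order of the torsion point carried by $\mathcal N$ we may assume this order is exactly $N$. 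Then the components of $s^{-1}(\mathcal N)$ dominating $B_K$, which are among those defining $\widetilde T^0_N$, have dimension exactly ${\rm dim}\,\mathcal N+(3n-g)=3n$, and along them ${\rm alb}_K(x)+{\rm alb}_K(y)+{\rm alb}_K(z)$ has exact order $N$. Taking $N$ prime and large keeps the exact-order bookkeeping transparent; and whenever $Z_b$ happens to be empty — which one expects generically, $f_b$ being generically finite onto its image — no constraint on $N$ is needed.

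The remaining two assertions are then elementary. Write $t_b\in A_b[N]\setminus Z_b$ for the torsion point carried by $\mathcal N$, so that the fibre of $\widetilde T^0_N\to B_K$ over a general $b$ is birational to $s_b^{-1}(t_b)$. For dominance of each of the three projections $\widetilde T^0_N\to\mathcal{L}_K$: since $f_b(L_b)+f_b(L_b)=A_b$, for every $x\in L_b$ the equation ${\rm alb}_K(y)+{\rm alb}_K(z)=t_b-{\rm alb}_K(x)$ is solvable in $(y,z)\in L_b\times L_b$, whence ${\rm pr}_1(s_b^{-1}(t_b))=L_b$ and ${\rm pr}_1\colon\widetilde T^0_N\to\mathcal{L}_K$ is dominant; symmetrically for ${\rm pr}_2$ and ${\rm pr}_3$. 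For generic finiteness of each of the three projections $\widetilde T^0_N\to\mathcal{L}_K\times_{B_K}\mathcal{L}_K$: at a general point $(x,y)$ of the image, the fibre is $\{z\in L_b:{\rm alb}_K(z)=t_b-{\rm alb}_K(x)-{\rm alb}_K(y)\}=f_b^{-1}(\text{one point})$, which is finite since $f_b={\rm alb}_{L_b}$ is generically finite onto its image by Lemma \ref{lealblag} (applicable because the Lagrangian covering $\Phi$ is surjective), the point being, for $(x,y)$ general, in the locus where $f_b$ has finite fibres; symmetrically for the other two pairs of factors.

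The step I expect to be the main obstacle is the dimension assertion: the bound ${\rm dim}\,\widetilde T^0_N\ge 3n$ is automatic, but bounding $\widetilde T^0_N$ from above forces the choice of an $N$-torsion point off the jump locus $Z_b$ of $s_b$, made uniformly over general $b$ and compatibly with the exact-order-$N$ condition — this is exactly where the Zariski density of torsion on abelian varieties, together with the surjectivity of $s_b$ coming from the twofold-sum hypothesis, intervenes. Once that is in place, the dominance of the projections $\widetilde T^0_N\to\mathcal{L}_K$ is a direct consequence of the twofold-sum hypothesis, and the generic finiteness of the projections $\widetilde T^0_N\to\mathcal{L}_K\times_{B_K}\mathcal{L}_K$ is a direct consequence of Lemma \ref{lealblag}.
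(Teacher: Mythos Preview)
Your argument is correct and follows the same fibrewise route as the paper: dominance of each projection $\widetilde{T}^0_N\to\mathcal{L}_K$ from the twofold-sum surjectivity, and generic finiteness of each projection $\widetilde{T}^0_N\to\mathcal{L}_K\times_{B_K}\mathcal{L}_K$ from the Albanese map being generically finite on its image (Lemma \ref{lealblag}). You are in fact more explicit than the paper on the dimension-$3n$ assertion, invoking Zariski density of torsion to select an $N$-torsion section off the jump locus of the threefold sum --- the paper's proof of the lemma leaves this point implicit and only brings in torsion density later, for the passage from $\widetilde{T}^0_N$ to $T_N\subset X^3$.
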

\begin{proof} By assumption,  the sum map ${L}_t\times { L}_t\rightarrow {\rm Alb}\,L_t$ is surjective for general $t$,
while by Lemma \ref{lealblag}, the Albanese map $L_t\rightarrow {\rm Alb}\,L_t$ is generically finite
on its image.  (Here the Albanese map of $L_t$ is computed using one of the finitely many
points of $L_t\cap K$, in other words, $t$ is  taken in $B_K$ rather than $B$.) This implies that for general  $x\in  L_t$, there is a
solution to the equation
\begin{eqnarray}\label{eqtroispoints}N( {\rm alb}_{L_t}\,x + {\rm alb}_{L_t}\,y+{\rm alb}_{L_t}\,z)=0,
\end{eqnarray}
with $y$, $z\in L_t$.
This is saying that the three projections  $\widetilde{T}\rightarrow \mathcal{L}_K$  are surjective.
Finally, using (\ref{eqtroispoints}),
we find that the projections $\widetilde{T}_N\rightarrow  \mathcal{L}_K\times_{B_K}\mathcal{L}_K$ are generically finite on their image
because the Albanese map of $L_t$ is generically finite on its image.
\end{proof}
It remains to see that the same properties hold for
$T_N\subset X\times X\times X\times X$.
This follows from the following
lemma which is proved exactly as
Lemma \ref{lealblag}.
\begin{lemm}\label{lealblag2} The assumptions that
the sum map $L_t\times L_t\rightarrow {\rm Alb}\,L_t$ is surjective is equivalent to the fact that
the natural map
$(\Phi,\Phi):\mathcal{L}_{K}\times_{B_K} \mathcal{L}_{K}\rightarrow X\times X$ is generically finite on its image.
\end{lemm}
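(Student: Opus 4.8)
The plan is to follow verbatim the strategy of Lemma \ref{lealblag}, with the single fiber $L_t$ replaced by the fiber product $L_t\times L_t$ of $\mathcal{L}_K\times_{B_K}\mathcal{L}_K\to B_K$ over a general $t\in B_K$, and the Albanese map of $L_t$ replaced by the sum map $s_t:L_t\times L_t\to {\rm Alb}\,L_t$. A preliminary remark: since $B_K\to B$ is generically finite between smooth varieties, it is étale over a dense open set, so that over a general point the hypotheses defining a Lagrangian covering carry over unchanged to $\mathcal{L}_K\to B_K$; in particular one has $T_{B_K,t}\cong H^0(L_t,N_{L_t/X})$ via the classifying map, the contraction $\lrcorner\sigma_X:N_{L_t/X}\to\Omega_{L_t}$ is a generic isomorphism, and the composite $T_{B_K,t}\to H^0(L_t,N_{L_t/X})\stackrel{\lrcorner\sigma_X}{\to}H^0(L_t,\Omega_{L_t})$ is an isomorphism.

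First I would reduce the generic finiteness of $(\Phi,\Phi)$ onto its image to the generic injectivity of its differential along a general fiber $L_t\times L_t$. At a general $(x,y)\in L_t\times L_t$ the tangent space fits in an exact sequence $0\to T_{L_t,x}\oplus T_{L_t,y}\to T_{(x,y)}(\mathcal{L}_K\times_{B_K}\mathcal{L}_K)\to T_{B_K,t}\to 0$, and modulo $T_{L_t,x}\oplus T_{L_t,y}$ — on which $d(\Phi,\Phi)$ is the generically injective inclusion into $T_{X,x}\oplus T_{X,y}$ induced by the generic embedding of $L_t$ in $X$ — the differential $d(\Phi,\Phi)_{(x,y)}$ is identified with the two-point evaluation map
\[ {\rm ev}_{(x,y)}:T_{B_K,t}\cong H^0(L_t,N_{L_t/X})\to N_{L_t/X,x}\oplus N_{L_t/X,y},\qquad v\mapsto (v(x),v(y)). \]
Exactly as in Lemma \ref{lealblag}, one checks that $d(\Phi,\Phi)_{(x,y)}$ is injective precisely when ${\rm ev}_{(x,y)}$ is, so $(\Phi,\Phi)$ is generically finite onto its image if and only if ${\rm ev}_{(x,y)}$ is injective for general $(x,y)$. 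Transporting this through the generic isomorphism $\lrcorner\sigma_X$ and the isomorphism $T_{B_K,t}\cong H^0(L_t,\Omega_{L_t})$ above, this is equivalent to the injectivity, for general $(x,y)$, of the two-point evaluation map ${\rm ev}_{(x,y)}:H^0(L_t,\Omega_{L_t})\to\Omega_{L_t,x}\oplus\Omega_{L_t,y}$.

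Finally I would identify this last map with the transpose of the differential $d(s_t)_{(x,y)}:T_{L_t,x}\oplus T_{L_t,y}\to T_0({\rm Alb}\,L_t)=H^0(L_t,\Omega_{L_t})^*$, namely $(u,w)\mapsto(\alpha\mapsto\alpha_x(u)+\alpha_y(w))$. Hence ${\rm ev}_{(x,y)}$ is injective for general $(x,y)$ if and only if $d(s_t)$ is surjective for general $(x,y)$, i.e. if and only if $s_t:L_t\times L_t\to{\rm Alb}\,L_t$ is dominant, hence (being proper) surjective; chaining the equivalences gives the lemma. As in Lemma \ref{lealblag}, the only point requiring care is the bookkeeping in the middle step — identifying the "vertical" part of $d(\Phi,\Phi)$ with the two-point normal-bundle evaluation map and ruling out that a kernel vector of ${\rm ev}_{(x,y)}$ combines with a tangent vector of $L_t\times L_t$ to either fake or spoil injectivity — which is handled using that $\Phi$ restricts to a generic embedding on $L_t$, just as there.
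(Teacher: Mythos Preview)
Your proposal is correct and is precisely the adaptation of the proof of Lemma \ref{lealblag} that the paper has in mind (the paper gives no separate argument, saying only that the lemma ``is proved exactly as Lemma \ref{lealblag}''). Your bookkeeping is accurate: the generic \'etaleness of $B_K\to B$ transports the Lagrangian covering hypotheses, the snake-lemma argument identifies injectivity of $d(\Phi,\Phi)_{(x,y)}$ with injectivity of the two-point normal-bundle evaluation, and the contraction $\lrcorner\sigma_X$ together with the duality between $ds_t$ and the evaluation on forms finishes the equivalence.
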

As $\Phi$ is surjective, the fact that
the projections  ${\rm pr}_i:\widetilde{T}^0_N\rightarrow \mathcal{L}_K$ are dominant for $i=1,\,2,\,3$  implies the same property
for the projections  ${\rm pr}_i:{T}_N\rightarrow X$.
As $(\Phi,\Phi):\mathcal{L}_{K}\times_{B_K} \mathcal{L}_{K}\rightarrow X\times X$ is generically finite on its image,
the fact that
the projections  ${\rm pr}_{ij}:\widetilde{T}^0_N\rightarrow \mathcal{L}_K\times_{B_K} \mathcal{L}_K$ are dominant for $i=1,\,2,\,3$
does not necessarily implies   the same property
for the projections  ${\rm pr}_{ij}:{T}_N\rightarrow X\times X$, but it will imply it
if  $N$ is  large, using the Zariski density of torsion points.
\end{proof}
\begin{example} In the case of the variety of lines $X=F_1(Y)$ of a smooth cubic fourfold, we get
by applying Theorem \ref{theotrianglecoverlag}
constructions of  triangle varieties for $X$, different from the one constructed in Section
\ref{subsecvarofline},  by using its Lagrangian covering by Fano surfaces
$S_H:=F_1(Y_H)$, where $Y_H\subset Y$ is a hyperplane section $Y\cap H$ of $H$, and $S_H$ is
its surface of
 lines. The construction depends on the choice of a Lagrangian surface $K\subset X$.
\end{example}
\section{Construction of surface decompositions from triangle varieties \label{secconst}}
Let $X$ be a smooth projective variety of dimension $2n$ and
$\sigma_X\in H^{2,0}(X)$ a holomorphic $2$-form on
$X$. First of all, note that from a triangle variety $T\subset X\times X\times X$, we can construct
for each $k\geq 3$ a subvariety $T_k$ of $X^k$ of dimension $kn$ satisfying the following property: the holomorphic $2$-form
$\sum_{i=1}^k\epsilon_i{\rm pr}_i^*\sigma_X$ vanishes on $T_k$, with $\epsilon_i=\pm 1$.
The $k$-angle variety
$T_k$ is defined inductively  by composition in the sense of correspondences. For $k=4$, let
\begin{eqnarray}\label{eqTk} T'_4={\rm pr}_{1245*}({\rm pr}_{123}^{-1}(T) \cap {\rm pr}_{345}^{-1}(T))\subset X^4,
\end{eqnarray}
where the projections are defined on $X^5$, ${\rm pr}_{1245}$ takes value in $X^4$ and
${\rm pr}_{123},\, {\rm pr}_{345}$ take value in $X^3$.
On ${\rm pr}_{123}^{-1}(T)$, one has ${\rm pr}_1^*\sigma_X+{\rm pr}_2^*\sigma_X+{\rm pr}_3^*\sigma_X=0$ and on
${\rm pr}_{345}^{-1}(T)$, one has ${\rm pr}_3^*\sigma_X+{\rm pr}_4^*\sigma_X+{\rm pr}_5^*\sigma_X=0$ so that, by subtracting,  one has on
the regular locus of $ {\rm pr}_{123}^{-1}(T) \cap {\rm pr}_{345}^{-1}(T)$, hence also on $T'_{4,reg}$ :
  \begin{eqnarray}
 \label{eqformequisannu}{\rm pr}_1^*\sigma_X+{\rm pr}_2^*\sigma_X-{\rm pr}_4^*\sigma_X-{\rm pr}_5^*\sigma_X=0,
 \end{eqnarray}
   where now the projections are defined on $X^4$ with factors indexed by $1,2,4,5$.
As $T$ dominates $X$ by the projections, the variety $T'_4$ so defined also dominates
$X$ by the various projections. As the fibers of the projection $T\rightarrow X$ have dimension
$n$, $T'_4$ has at least one component which is of dimension $\geq 4n$.   We take for $T_4$
the union of the irreducible components of dimension $4n$ of $T_4$.
Note that, if $X$ is hyper-K\"ahler, the $2$-form $\sigma_X$ is everywhere nondegenerate, so
$T_4$ does not have components of  dimension $>4n$, because we already know
 by
(\ref{eqformequisannu}) that the components are Lagrangian for the holomorphic symplectic form ${\rm pr}_1^*\sigma_X+{\rm pr}_2^*\sigma_X-{\rm pr}_4^*\sigma_X-{\rm pr}_5^*\sigma_X$ on $X^4$.
The variety $T_k$ is similarly defined inductively by composing
$T_{k-1}$ and $T$.

 Recall that for $X$ as above, an algebraically coisotropic subvariety $Z\subset X$ of dimension $n+1$ admits a rational map $\tau:Z\dashrightarrow \Sigma$, where    $\Sigma$ is a surface and
 $\sigma_{X\mid Z_{\rm reg}}=\tau_{\rm reg}^*\sigma_\Sigma$ for some holomorphic $2$-form
 $\sigma_\Sigma$ on $\Sigma$.
\begin{theo}\label{proexistesurfdec} Let $X$ be a  projective hyper-K\"ahler variety of dimension $2n$. Assume $X$ has a triangle variety
$T\subset X^3$ and an  algebraically coisotropic subvariety  $\tau:Z\dashrightarrow \Sigma$ of dimension $n+1$. Let $F\subset X$ be the general fiber of $\tau$.
 Then if the intersection of $F^n\subset X^n$ and  ${\rm pr}_{1\ldots n}(T_{n+1})\subset X^n$ satisfies
 \begin{eqnarray}\label{eqintnonnul}F^{n}\cdot {\rm pr}_{1\ldots n}(T_{n+1})\not=0,
 \end{eqnarray}
$X$ admits a surface decomposition. In particular it admits  mobile algebraically coisotropic subvarieties of any codimension
$\leq n$.
\end{theo}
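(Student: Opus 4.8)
The plan is to take all the surfaces equal to $\Sigma$ and to build $\Gamma$ as a component of an explicit incidence variety inside $X^{n+1}$. I would start from the $(n+1)$-angle variety $T_{n+1}\subset X^{n+1}$ associated to $T$, constructed as in the discussion preceding the theorem, so that $\bigl(\sum_{i=1}^{n+1}\epsilon_i\,{\rm pr}_i^*\sigma_X\bigr)_{\mid T_{n+1,{\rm reg}}}=0$ for suitable signs $\epsilon_i=\pm1$, that $\dim T_{n+1}=n(n+1)$, and that the projection ${\rm pr}_{1\ldots n}\colon T_{n+1}\to X^n$ onto the first $n$ factors is generically finite onto its image (this last property propagates through the inductive composition defining $T_{n+1}$ from the corresponding property of $T$, and should be checked along the way). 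I would then let $\Gamma$ be a desingularization of the union of those irreducible components of $T_{n+1}\cap(Z^n\times X)$ which dominate $\Sigma^n$ via $\psi:=(\tau\circ{\rm pr}_1,\dots,\tau\circ{\rm pr}_n)$, and set $\phi:={\rm pr}_{n+1}\colon\Gamma\to X$. Since $X$ is hyper-K\"ahler, $h^{2,0}(X)=1$, so it is enough to verify the decomposition identity for $\sigma_X$ itself.

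The identity of $2$-forms should be the routine part. On the dense open subset of $\Gamma$ on which the first $n$ coordinates lie in $Z_{\rm reg}$ and all the relevant loci are smooth, the algebraically coisotropic condition $\sigma_{X\mid Z_{\rm reg}}=\tau^*\sigma_\Sigma$ gives ${\rm pr}_i^*\sigma_X=\psi_i^*\sigma_\Sigma$ for $i\le n$, where $\psi_i:=\tau\circ{\rm pr}_i$. Substituting into the relation $\sum_{i=1}^{n+1}\epsilon_i\,{\rm pr}_i^*\sigma_X=0$ gives $\phi^*\sigma_X=\psi^*\bigl(\sum_{i=1}^n{\rm pr}_i^*\sigma_i\bigr)$ with $\sigma_i:=-\epsilon_{n+1}\epsilon_i\,\sigma_\Sigma\in H^0(\Sigma,\Omega_\Sigma^2)$; as an equality of holomorphic $2$-forms valid on a dense open subset of the smooth variety $\Gamma$, it holds on all of $\Gamma$. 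As in the earlier remark, one performs the desingularizations of $Z$, of $\Sigma$, and of the incidence variety before writing these equalities.

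The substantive step is to show that $\phi$ and $\psi$ are both surjective and generically finite, and this is exactly where hypothesis (\ref{eqintnonnul}) is used. The fiber of $\psi$ over a general $(s_1,\dots,s_n)\in\Sigma^n$ is, up to birational identification, the set of $(x_1,\dots,x_n,x_{n+1})\in T_{n+1}$ with $x_i\in F_{s_i}:=\tau^{-1}(s_i)$; since $\dim F_{s_i}=n-1$, $\dim{\rm pr}_{1\ldots n}(T_{n+1})=n(n+1)$ and $n(n-1)+n(n+1)=\dim X^n$, and since ${\rm pr}_{1\ldots n}$ is generically finite onto its image, this fiber is finite for general $(s_i)$, and it is nonempty for general $(s_i)$ precisely because the intersection number $F^n\cdot{\rm pr}_{1\ldots n}(T_{n+1})$ is nonzero. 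Hence $\psi$ is dominant and generically finite, and in particular $\dim\Gamma=2n$. Next one observes that $\sigma_\Sigma\ne0$: otherwise $\sigma_X$ would vanish on $Z_{\rm reg}$, forcing $Z$ to be isotropic, which is impossible as $\dim Z=n+1>n$. Therefore $\sum_i{\rm pr}_i^*\sigma_i$ has generic rank $2n$ on $\Sigma^n$, and pulling it back along the generically finite $\psi$ shows $\phi^*\sigma_X$ has generic rank $2n$ on the $2n$-dimensional $\Gamma$; a tangent vector in the kernel of $d\phi$ then lies in the kernel of $\phi^*\sigma_X$, so $d\phi$ is generically injective, $\phi$ is generically finite onto its image, and that image, being $2n$-dimensional and contained in the irreducible $X$, equals $X$. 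Thus $\Gamma$, $S_1=\dots=S_n=\Sigma$, $\phi$, $\psi$ give a surface decomposition in the sense of Definition \ref{defiintrodecomp}, and the concluding statement on mobile algebraically coisotropic subvarieties of every codimension $\le n$ is then immediate from Proposition \ref{propeasysenseintro}.

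I expect the main obstacle to lie in the dominance argument of the previous paragraph --- specifically, in passing from the nonvanishing of the \emph{intersection number} $F^n\cdot{\rm pr}_{1\ldots n}(T_{n+1})$ to the genuine nonemptiness and finiteness of the general fiber of $\psi$, which requires a conservation-of-number argument for the $2$-dimensional family $\{F_s\}_{s\in\Sigma}$ of fibers of $\tau$ and some care, since that family sweeps out only $Z$ rather than all of $X$. A secondary technical point is the bookkeeping of the signs $\epsilon_i$ and of the generic-finiteness-onto-image property through the inductive construction of $T_{n+1}$, together with the usual desingularizations needed to make the manipulations of holomorphic forms rigorous.
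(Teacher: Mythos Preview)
Your construction and the derivation of the $2$-form identity are exactly what the paper does: set $\Gamma_0=T_{n+1}\cap(Z^n\times X)$, take $\phi={\rm pr}_{n+1}$ and $\psi=\tau^n\circ{\rm pr}_{1\ldots n}$, and combine the $(n+1)$-angle relation with $\sigma_{X\mid Z}=\tau^*\sigma_\Sigma$. The paper also uses the nonvanishing of $F^n\cdot{\rm pr}_{1\ldots n}(T_{n+1})$ to conclude that $\psi$ is dominant, and then the rank argument to conclude that $\phi$ is dominant, just as you do.

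The gap you correctly anticipate is real, and the paper does \emph{not} resolve it the way you attempt to. Your claim that the general fiber of $\psi$ is \emph{finite} relies on the intersection $(F_{s_1}\times\cdots\times F_{s_n})\cap{\rm pr}_{1\ldots n}(T_{n+1})$ being zero-dimensional for general $(s_i)$, equivalently on $\Gamma_0$ having dimension exactly $2n$. But the fibers $F_s$ only sweep out $Z$, not all of $X$, so there is no moving lemma available to force properness of this intersection; the dimension of $\Gamma_0$ may well exceed $2n$. The nonzero intersection number guarantees nonemptiness of every fiber (hence dominance of $\psi$), but says nothing about finiteness.

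The paper sidesteps this entirely. It takes an irreducible component $\Gamma'_0\subset\Gamma_0$ dominating $\Sigma^n$ (hence, by the rank argument, also dominating $X$), and then simply cuts $\Gamma'_0$ by general hyperplane sections until it reaches dimension $2n$; at each step both dominance conditions are preserved because the fibers of $\phi$ and $\psi$ are positive-dimensional before the cut. This also makes your secondary concern about the generic finiteness of ${\rm pr}_{1\ldots n}\colon T_{n+1}\to X^n$ unnecessary: the paper never uses it.
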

In (\ref{eqintnonnul}), we have ${\rm dim}\,F=n-1$, so ${\rm dim}\,F^{n}=n(n-1)$ and ${\rm dim}\,T_{n+1}=n(n+1)$, while the intersection takes place in $X^{n}$ which has dimension $2n^2=n(n-1)+n(n+1)$.
\begin{proof} We construct $\phi:\Gamma_0\rightarrow X,\,\psi: \Gamma_0\rightarrow \Sigma^n$ by the formulas
\begin{eqnarray}\label{eqGamma0} \Gamma_0={\rm pr}_{1\ldots n}^{-1}(Z^n)\cap T_{n+1}\subset T_{n+1}\subset X^{n+1},
\\
\nonumber
\phi: ={\rm pr}_{n+1}:\Gamma_0\rightarrow X,\,\,\psi=\tau^n\circ {\rm pr}_{1\ldots n}:\Gamma_0\rightarrow \Sigma^n.
\end{eqnarray}
As $\Gamma_0\subset T_{n+1}$, the form $\sum_{i=1}^{n+1}\epsilon_i{\rm pr}_i^*\sigma_X$ vanishes on $\Gamma_{0,reg}$, where the $\epsilon_i$ are the signs introduced above. In other words, using $\phi={\rm pr}_{n+1}$
\begin{eqnarray}\phi^*\sigma_X=\sum_{i=1}^n\epsilon'_i{\rm pr}_i^*\sigma_{X\mid \Gamma_{0,reg}}
\end{eqnarray}
in $H^0(\Omega^2_{\Gamma_{0,reg }})$, where $\epsilon'_i=\pm \epsilon_i$. We next use the fact that ${\rm pr}_i(\Gamma)\subset Z$ and  that
$\sigma_{X\mid Z}=\tau^*(\sigma_\Sigma)$. We then get the desired formula characterizing
 a surface decomposition
\begin{eqnarray}\label{egaquivaservir} \phi^*\sigma_X=\psi^*(\sum_{i=1}^n\epsilon'_i {\rm pr}_i^*(\sigma_\Sigma))\,\,{\rm in}\,\,H^{2,0}(\Gamma_{0,reg}).
\end{eqnarray}
We need to show that $\phi$ and $\psi$ are dominant, and that we can assume that they are generically finite.
The fact that $\psi$ is dominant is a consequence of (\ref{eqintnonnul}), which can be seen as saying that ${\rm pr}_{1\ldots n}^{-1}(Z^n)\cap T_{n+1}$ intersects nontrivially the fibers of
$\tau^n:{\rm pr}_{1\ldots n}^{-1}(Z^n)\rightarrow \Sigma^n$.
Knowing that $\psi$ is dominant, we conclude that the form $\psi^*(\sum_{i=1}^n\epsilon'_i {\rm pr}_i^*(\sigma_\Sigma))$ has generic rank $2n$ on $\Gamma_{0,reg}$. It thus follows
 from (\ref{egaquivaservir}) that $\phi^*\sigma_X$
 has  generic rank $2n$ on $\Gamma$, hence that $ \phi$ is also dominant. The last argument applies to any irreducible component $\Gamma'_0 $ of $\Gamma_0$ dominating $\Sigma^n$, which thus also has to dominate $X$. Finally, by cutting $\Gamma'_0$ by hyperplane sections and reapplying the same arguments  if necessary, we get a $\Gamma$ which  is generically finite onto both $\Sigma^n$ and $X$, and still
 satisfies (\ref{egaquivaservir}).
\end{proof}
We  also have the following result, whose proof is a variant of that of  Theorem \ref{proexistesurfdec},  and shows how to construct new algebraically coisotropic subvarieties out of old ones, using a triangle variety:

 Let $X$ be smooth projective variety of dimension $2n$  with an everywhere nondegenerate holomorphic $2$-form $\sigma_X$.
Denote by  $I_{r}\subset X$  an algebraically coisotropic subvariety of $X$ of codimension
$n-r$. Hence there exists  a rational map
$$\phi_r:I_r\dashrightarrow B_r$$
to a smooth projective variety $B_r$ of dimension $2r$, with general  fiber $F_r$ of dimension $n-r$, such that
\begin{eqnarray}\label{eqalgcoisio}
\sigma_{X\mid I_r}=\phi_r^*\sigma_{B_r},
\end{eqnarray}
for some holomorphic $2$-form $\sigma_{B_r}$ on $B_r$ which is  generically of maximal rank $2r={\rm dim}\,B_r$.
\begin{theo}\label{theovariant} Assume that $X$ has a triangle variety $T$ relative to $\sigma_X$ and let
$I_r,\,I_{r'}$ be two algebraically coisotropic varieties of $X$ of respective codimensions
$r,\,r'$. Assume that

 (*) the class ${\rm pr}_{3*}([T]\cup {\rm pr}_{12}^*([I_r\times I_{r'}]))$ is nonzero in $H^{2n-2r-2r'}(X,\mathbb{Q})$ (so in particular $r+r'\leq n$).

 Then ${\rm pr}_3(T\cap {\rm pr}_{12}^{-1}(I_r\times I_{r'}))\subset X$ contains  an algebraically coisotropic $ I_{r+r'}$ subvariety of codimension $n-r-r'$.

\end{theo}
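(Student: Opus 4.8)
The plan is to imitate the construction in the proof of Theorem \ref{proexistesurfdec}, using the triangle variety $T$ to ``add'' the two coisotropic subvarieties $I_r$ and $I_{r'}$. Set
$$W:=T\cap {\rm pr}_{12}^{-1}(I_r\times I_{r'})\subset X^3,$$
and consider the two natural maps ${\rm pr}_3:W\to X$ and $\psi:=(\phi_r\circ{\rm pr}_1,\phi_{r'}\circ{\rm pr}_2):W\dashrightarrow B_r\times B_{r'}$, where $\dim(B_r\times B_{r'})=2(r+r')$. On the regular locus of any component $W_0$ of $W$ whose images under ${\rm pr}_1,{\rm pr}_2$ meet the open loci of $I_r,I_{r'}$ where $\phi_r,\phi_{r'}$ are morphisms, the relations (\ref{eqalgcoisio}) for $I_r$, $I_{r'}$ combined with the Lagrangian condition (\ref{eqtriangletexte}) for $T$ give an identity of holomorphic $2$-forms
$${\rm pr}_3^*\sigma_X \,=\, -({\rm pr}_1^*\sigma_X+{\rm pr}_2^*\sigma_X)_{\mid W_0}\,=\, -\,\psi^*\bigl({\rm pr}_{B_r}^*\sigma_{B_r}+{\rm pr}_{B_{r'}}^*\sigma_{B_{r'}}\bigr),$$
so that ${\rm pr}_3^*\sigma_X$ is, along $W_0$, pulled back via $\psi$ from the form $\eta:={\rm pr}_{B_r}^*\sigma_{B_r}+{\rm pr}_{B_{r'}}^*\sigma_{B_{r'}}$ on $B_r\times B_{r'}$, which has maximal rank $2r+2r'=2(r+r')$ on its smooth locus.

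The role of hypothesis (*) is precisely to select the correct component: since ${\rm pr}_{3*}([T]\cup {\rm pr}_{12}^*[I_r\times I_{r'}])$ is a nonzero cohomology class of degree $2(n-r-r')$ carried by ${\rm pr}_3(W)$, the variety $W$ must have an irreducible component $W_0$ with ${\rm pr}_3\mid_{W_0}$ generically finite onto a subvariety $I_{r+r'}:={\rm pr}_3(W_0)\subset X$ of dimension exactly $n+r+r'$, i.e. of codimension $n-r-r'$; the excess components of $W$, on which ${\rm pr}_3$ drops dimension, contribute $0$ to the pushforward. This $W_0$ lies in $T$, so the displayed identity applies to it. Now I check that $I_{r+r'}$ is coisotropic. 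Because ${\rm pr}_3\mid_{W_0}$ is generically finite, $\sigma_{X\mid I_{r+r'}}$ and its pullback $({\rm pr}_3^*\sigma_X)_{\mid W_0}=\psi^*(-\eta)$ have the same generic rank, and the latter, being pulled back from a variety of dimension $2(r+r')$, has rank $\le 2(r+r')$. Conversely, for any subvariety of $X$ of codimension $c$ the restriction of the symplectic form $\sigma_X$ has generic rank $\ge 2n-2c$, since at a smooth point the kernel of the restricted form lies in the $\sigma_X$-orthogonal of the tangent space, which has dimension $c$; for $c=n-r-r'$ this forces rank $\ge 2(r+r')$. Hence $\sigma_{X\mid I_{r+r'}}$ has generic rank exactly $2(r+r')$, so $I_{r+r'}$ is coisotropic of codimension $n-r-r'$.

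It remains to verify algebraic integrability of the characteristic foliation of $I_{r+r'}$, which then yields a rational fibration $\phi_{r+r'}:I_{r+r'}\dashrightarrow B_{r+r'}$ with $\dim B_{r+r'}=2(r+r')$ and $\sigma_{X\mid I_{r+r'}}=\phi_{r+r'}^*\sigma_{B_{r+r'}}$. Since $\eta$ is generically symplectic on $B_r\times B_{r'}$, the kernel of $\psi^*\eta$ at a general point of $W_0$ equals the tangent space to the fibre of $\psi$; thus the characteristic foliation of $({\rm pr}_3^*\sigma_X)_{\mid W_0}$ has algebraic leaves, namely the fibres of $\psi$. Transporting along the generically \'etale map ${\rm pr}_3\mid_{W_0}$, the characteristic foliation of $\sigma_{X\mid I_{r+r'}}$ also has algebraic leaves: the general leaf through $v\in I_{r+r'}$ is the component through a preimage $w\in{\rm pr}_3^{-1}(v)\cap W_0$ of ${\rm pr}_3(\psi^{-1}(\psi(w)))$, of dimension $n-r-r'$. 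As recalled after (\ref{eqpourkerisot}), a coisotropic subvariety whose characteristic foliation has algebraic leaves is algebraically coisotropic; concretely one takes $\phi_{r+r'}$ to be the rational quotient by this foliation and uses that $\sigma_{X\mid I_{r+r'}}$ is closed with kernel tangent to its fibres to descend it. Since $W_0\subset W$, one has $I_{r+r'}\subset {\rm pr}_3(T\cap{\rm pr}_{12}^{-1}(I_r\times I_{r'}))$, which completes the proof.

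The main obstacle I expect is the bookkeeping that converts the cohomological nonvanishing (*) into the geometric statement isolating $W_0$: one must control excess intersection inside $W=T\cap {\rm pr}_{12}^{-1}(I_r\times I_{r'})$, the multiplicities occurring in $[T]\cup{\rm pr}_{12}^*[I_r\times I_{r'}]$, and possible cancellation among components of $W$ sharing the same ${\rm pr}_3$-image, in order to guarantee that a component $W_0$ with ${\rm pr}_3\mid_{W_0}$ generically finite onto an $(n+r+r')$-dimensional image genuinely exists. A secondary, more routine point, handled by passing to suitable desingularizations of $I_r$ and $I_{r'}$ (with $\widetilde{\phi}_r,\widetilde{\phi}_{r'}$ morphisms) and base-changing $W_0$, is to ensure the relations (\ref{eqalgcoisio}) hold along general points of ${\rm pr}_1(W_0)$ and ${\rm pr}_2(W_0)$, i.e. that these images are not contained in the indeterminacy loci of $\phi_r$ and $\phi_{r'}$.
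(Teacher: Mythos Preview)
Your proof is correct and follows essentially the same route as the paper: define the intersection $W=T\cap{\rm pr}_{12}^{-1}(I_r\times I_{r'})$, use the triangle relation together with (\ref{eqalgcoisio}) to factor ${\rm pr}_3^*\sigma_X$ through $B_r\times B_{r'}$, invoke (*) to produce a component of the image of dimension $n+r+r'$, and conclude algebraic coisotropicity from the diagram. Your treatment is in fact more explicit than the paper's on two points: you spell out the two-sided rank estimate showing that $\sigma_{X\mid I_{r+r'}}$ has rank exactly $2(r+r')$, and you identify the leaves of the characteristic foliation as images of fibres of $\psi$, whereas the paper simply records the factorization diagram through $B_r\times B_{r'}$ and declares the result.

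One small point of phrasing: you assert that (*) yields an irreducible \emph{component} $W_0$ of $W$ on which ${\rm pr}_3$ is generically finite. Strictly, (*) only guarantees that the refined intersection cycle $[T]\cdot{\rm pr}_{12}^*[I_r\times I_{r'}]$ (of pure dimension $n+r+r'$, supported on $W$) has a component pushing forward nontrivially, hence an $(n{+}r{+}r')$-dimensional \emph{subvariety} $W_0\subset W$ with ${\rm pr}_3$ generically finite onto its image; if $W$ itself has excess dimension, $W_0$ need not be a component of $W$. The paper handles this the same way (``choosing a subvariety $Y'\subset Y$ mapping to $I_{r+r'}$ in a generically finite way''), so your concern in the final paragraph is well placed but does not differentiate the two arguments.
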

As usual, ${\rm pr}_i$ and  ${\rm pr}_{ij}$  denote the projections from $X\times X\times X$ to its factors, or products of two factors.
\begin{proof} The variety $Y:=T\cap {\rm pr}_{12}^{-1}(I_r\times I_{r'})\subset X\times X\times X$ maps  to $B_r\times B_{r'}$ by the map
$\phi_{r+r'}:=(\phi_r,\phi_{r'})\circ {\rm pr}_{12\mid Y}$.
By the definition of a triangle variety and using (\ref{eqalgcoisio}), we get that
\begin{eqnarray}
\label{eqnewcoisot} {\rm pr}_3^*\sigma_{X\mid Y}=-\phi_{r+r'}^*({\rm pr}_1^*\sigma_{B_r}+{\rm pr}_2^*\sigma_{B_{r'}})\,\,{\rm in}
\,\,H^{2,0}(Y_{\rm reg}).
\end{eqnarray}
It thus follows that the   rank of $ {\rm pr}_3^*\sigma_X$ restricted to  $Y_{\rm reg}$   is nowhere greater than $2r+2r'$.
On the other hand, Condition (*) implies that ${\rm pr}_3(Y)$ has at least one component of dimension
$\geq n+r+r'$. This component thus must have dimension exactly $n+r+r'$ and is coisotropic.
This is the desired variety $I_{r+r'}$, and it is in fact
algebraically coisotropic, choosing a subvariety $Y'\subset Y$ mapping to $I_{r+r'}$   in a generically finite way and using the diagram
\begin{eqnarray}
  \xymatrix{
&Y'\ar[r]^{{\rm pr}_3}\ar[d]^{(\phi_r,\phi_{r'})\circ {\rm pr}_{12}}& I_{r+r'}\\
&B_r\times B_{r'}&},
\end{eqnarray}
in which ${\rm pr}_3^*\sigma_{X\mid Y'}=\phi_r^*\sigma_{B_r}+\phi_{r'}^*\sigma_{B_{r'}}$.
\end{proof}
As a consequence of Theorem \ref{proexistesurfdec} (or using  methods similar  as above), we conclude now that many explicitly constructed projective hyper-K\"ahler manifolds admit a surface decomposition :
\begin{theo}\label{theodec} The following hyper-K\"ahler manifolds  admit surface decompositions:

\begin{enumerate}
\item\label{1} The Fano variety of lines $X=F_1(Y)$ of  a cubic fourfold $Y$ (see \cite{beaudo}).
\item\label{2} The Debarre-Voisin hyper-K\"ahler fourfold   (see \cite{debarrevoisin}).
\item\label{3} The double EPW sextics (see \cite{ogrady}).
\item\label{4} The LLSvS hyper-K\"ahler $8$-fold (see \cite{llss}).
\item\label{5}The LSV compactification of the intermediate Jacobian fibration associated with a cubic fourfold
(see \cite{lsv}).
\end{enumerate}
\end{theo}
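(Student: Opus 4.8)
The plan is to apply Theorem \ref{proexistesurfdec}, together with the variant constructions of Theorem \ref{theovariant}, to each of the five families. For a projective hyper-K\"ahler manifold $X$ of dimension $2n$, Theorem \ref{proexistesurfdec} requires exactly three inputs: a triangle variety $T\subset X^3$; an algebraically coisotropic subvariety $\tau:Z\dashrightarrow\Sigma$ of dimension $n+1$ (so $\Sigma$ is a surface and the general fibre $F$ of $\tau$ has dimension $n-1$); and the non-vanishing of the intersection number $F^{n}\cdot {\rm pr}_{1\ldots n}(T_{n+1})$ in $X^n$. Granting these, the surface decomposition is produced directly, so the whole argument reduces to assembling the first two objects and checking the intersection condition in each case.

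The triangle varieties are already in place from Section \ref{secexamples}. For (\ref{1}), $X=F_1(Y)$, one uses the family of honest triangles, i.e.\ the plane sections of $Y$ splitting into three lines (Section \ref{subsecvarofline}); for (\ref{2}), the Debarre--Voisin fourfold, Bazhov's variety of triples of $6$-spaces spanning a common $V_9\subset V_{10}$ (Section \ref{secbazhov}); for (\ref{3}), the double EPW sextic, the liaison construction of triangles of conics of Theorem \ref{theotriangleepw}; for (\ref{5}), the LSV compactification, the fact that it carries a Lagrangian fibration together with Proposition \ref{triangalelagrangien}; and for (\ref{4}), the LLSvS $8$-fold, its Lagrangian covering by the surfaces coming from the twisted-cubic model, via Theorem \ref{theotrianglecoverlag}. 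The same Theorem \ref{theotrianglecoverlag} also furnishes alternative triangle varieties for (\ref{1}) and (\ref{3}).

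The algebraically coisotropic subvarieties of dimension $n+1$ have to be read off from the explicit models, following the mechanism proposed in \cite{voicoiso}: the total space $Z$ of a two-dimensional family of constant-cycle subvarieties of dimension $n-1$ is $(n+1)$-dimensional, the form $\sigma_X$ restricts to it with generic rank $2$, hence descends to a $2$-form on the parametrizing surface $\Sigma$, and $Z$ becomes algebraically coisotropic. For the fourfold cases (\ref{1}), (\ref{2}), (\ref{3}) this amounts to producing an algebraically coisotropic \emph{divisor}, for instance a uniruled divisor whose characteristic foliation — the ruling by rational, hence constant-cycle, curves — has a surface as base; for (\ref{5}) one may instead exploit the Lagrangian fibration, taking $Z$ to be the preimage of a curve in the base, which is coisotropic of dimension $n+1$ with characteristic foliation contained in the abelian fibres, and checking algebraicity of that foliation from the structure of the intermediate Jacobian fibration; and for (\ref{4}) one uses families of rational $(n-1)$-folds coming from the twisted-cubic description of \cite{llss}. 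Verifying in each case that such a $Z$ exists, is genuinely of dimension $n+1$, and that $\sigma_{X\mid Z_{\rm reg}}$ has generic rank exactly $2$ is the first place where case-specific geometry enters.

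The main obstacle will be the numerical condition (\ref{eqintnonnul}). By the proof of Theorem \ref{proexistesurfdec} it says precisely that $\psi=\tau^n\circ{\rm pr}_{1\ldots n}:\Gamma_0\to\Sigma^n$ is dominant, equivalently that ${\rm pr}_{1\ldots n}^{-1}(Z^n)\cap T_{n+1}$ meets the general fibre of $\tau^n$. I would establish it by an intersection computation inside the relevant incidence and parameter spaces — for $F_1(Y)$ in terms of plane sections and the chosen uniruled divisor, for the Debarre--Voisin and double EPW cases using the linear-algebra, respectively liaison, descriptions already set up in Section \ref{secbazhov} and in Theorem \ref{theotriangleepw} — or, when a direct count is unwieldy, by degenerating $X$ to a more special member (for instance one covered by a product of surfaces) on which $T$ and $Z$ are transparent and $F^{n}\cdot {\rm pr}_{1\ldots n}(T_{n+1})$ is manifestly nonzero, and invoking semicontinuity; a third option is to argue, as for condition (*) of Theorem \ref{theovariant}, that for generic choices of the data the pushed-forward class cannot vanish. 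I expect the higher-dimensional examples (\ref{4}) and (\ref{5}) to be the hardest, since there one must first make the coisotropic subvariety fully explicit before the intersection condition can even be addressed.
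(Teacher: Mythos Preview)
Your plan for cases \ref{1}, \ref{2} and \ref{3} matches the paper's, but you are making the verification of (\ref{eqintnonnul}) harder than it is: in dimension $4$ the fibre $F$ of $\tau$ is a curve, and since the very general $X$ has Picard number $1$, $[F]$ is a positive multiple of $h^3$. The intersection with the codimension-$2$ class $[{\rm pr}_{12}(T)]$ is then automatically positive; no degeneration or incidence computation is needed.

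For cases \ref{4} and \ref{5} your outline diverges from the paper and, in the second case, breaks. For the LLSvS $8$-fold the paper does \emph{not} go through Theorem \ref{theotrianglecoverlag}; it simply uses the dominant rational map $\mu:F_1(Y)\times F_1(Y)\dashrightarrow F'_3(Y)$ of \cite{voicoiso}, which satisfies $\mu^*\sigma_{F'_3(Y)}={\rm pr}_1^*\sigma_{F_1(Y)}-{\rm pr}_2^*\sigma_{F_1(Y)}$, and composes with the surface decomposition already obtained in case \ref{1}. This reduces case \ref{4} to case \ref{1} in one line and avoids having to verify the hypotheses of Theorem \ref{theotrianglecoverlag}. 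For the LSV $10$-fold your proposal to take $Z$ as the preimage of a curve in the base does not work: such a $Z$ has the right dimension $n+1$, but the characteristic foliation on $Z$ restricts to a codimension-$1$ linear foliation on each abelian fibre $J(Y_H)$, and since the generic intermediate Jacobian $J(Y_H)$ is simple, these leaves are \emph{not} algebraic. The paper instead builds $Z$ geometrically: the map $(l,H)\mapsto \Psi_{Y_H}(3l-h^2)$ gives a $\mathbb{P}^3$-bundle over $F_1(Y)$ inside $J(Y)$, and restricting it over the uniruled divisor $E'\subset F_1(Y)$ from case \ref{1} produces a genuine codimension-$4$ algebraically coisotropic $Z\to\Sigma$. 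The surface decomposition is then obtained directly from the relative sum map $Z\times_B\cdots\times_B Z\to J(Y)$ (five factors) together with $Z\to\Sigma$, using (\ref{equtile}) rather than the general machine of Theorem \ref{proexistesurfdec}. So the ``hard'' cases are in fact handled by reducing to the geometry of $F_1(Y)$, not by a head-on application of the general criterion.
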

\begin{proof}[Proof of cases   \ref{1} and \ref{2}] The case of  the Beauville-Donagi hyper-K\"ahler fourfold
$X=F_1(Y)$ is done as follows:
recall first that $F_1(Y) $ has an ample (singular) uniruled divisor
$D$ which can be constructed using  the rational self-map of degree $16$
$$\phi:X\dashrightarrow X$$
constructed in \cite{voisinFano}. This map associates to a general  point $[l]$ parameterizing
a line $l\subset Y$ the point  $[l']$ parameterizing
the  line $l'\subset Y$ such that there is a unique plane
$P\subset \mathbb{P}^5$ with $P\cap Y=2l+l'$. It satisfies the property that
\begin{eqnarray}
\label{eqpullbackphi} \phi^*\sigma_X=-2\sigma_X.
\end{eqnarray}
This map has indeterminacies
when the plane $P$ is not unique, and this happens along a surface $\Sigma$ which is studied in
\cite{amerik}. After blowing-up $\Sigma$, the map $\phi$ becomes a morphism
$\tilde{\phi}:\widetilde{F_1(Y)}\rightarrow F_1(Y)$ which is finite (see \cite{amerik}). The image
 of the exceptional divisor $E$ under $\tilde{\phi}$ is thus a uniruled divisor
 $E'$ in $F_1(Y)$  which has in fact   normalization isomorphic to $E$.
  We thus have a diagram

 \begin{eqnarray}\label{numerodiagpourEF}
  \xymatrix{
&E\ar[d]^{\tau_E}\ar^<{\,\,\,\,\,\,\,\,\,\,\tilde{\phi}_E}[r]&E'\subset X=F_1(Y)\\
&\Sigma&},
\end{eqnarray}
where $\tau_E$ is the restriction of the blowing-up morphism to $E$,
such that
$\tilde{\phi}_E^*\sigma_X=\tau_E^*\sigma_\Sigma$ for some
holomorphic $2$-form $\sigma_\Sigma$ on $\Sigma$.

On the other hand, we have the triangle variety
$T\subset X\times X\times X$ described in Section \ref{subsecvarofline}.
We thus have the ingredients needed to apply Theorem \ref{proexistesurfdec}, but we have to check
the condition (\ref{eqintnonnul}).
This    is easy because
${\rm pr}_{12}(T)\subset X\times X$ has codimension $2$, and
 the classes of the  fibers $F$ of $\tau_E:E'\rightarrow \Sigma$ must be proportional to $h^3$, where
 $h$ is the first Chern class of an ample line bundle on $X$,  because $\rho (X)=0$.
 Hence the intersection number
 $F\cdot {\rm pr}_{12}(T)$ is positive. We thus get a surface decomposition
 given by
  \begin{eqnarray}\label{eqdecompF1}\Gamma={\rm pr}_{12}^{-1}(E\times E )\cap T\subset X^3,\\
  \nonumber
  \psi=(\tau_E,\tau_E)\circ {\rm pr}_{12}:\Gamma\rightarrow \Sigma\times \Sigma,\,\,\phi={\rm pr}_{3\mid \Gamma}:\Gamma\rightarrow X.
\end{eqnarray}
The proof in the case \ref{2} works similarly. We use on the one hand the
triangle variety $T$ constructed by Bazhov (see \cite{bazhov} or  Section
\ref{secbazhov}), and on the other hand the existence of a uniruled divisor $\tau:D\rightarrow \Sigma,\,D\rightarrow X$
that we can exhibit either by
looking at the indeterminacies of Bazhov's construction, or by applying
\cite{charlespacienza}, using the fact that the Debarre-Voisin fourfold has the deformation type of $K3^{[2]}$. As the very general Debarre-Voisin varieties have Picard number $1$, the fibers
of a uniruled divisors have as before a class proportional to  $h^3$, where $h$ is an ample divisor class, hence
the variety
$${\rm pr}_{12}^{-1}(D\times D)\subset T\subset X\times X\times X$$
dominates $\Sigma\times \Sigma$ by $(\tau,\tau)\circ {\rm pr}_{12}$, hence $X$ by the projection ${\rm pr}_3$. The rest of the argument is identical.
\end{proof}
\begin{proof}[Proof of cases \ref{4} and \ref{5}]
The LLSvS manifold $F'_3(Y)$ is a hyper-K\"ahler $8$-fold contructed in
\cite{llss} as  a smooth hyper-K\"ahler model of the
basis of the rationally connected fibration of
the Hilbert scheme $F_3(Y)$ of degree $3$ rational curves in a smooth cubic fourfold
 $X$ not containing a plane.
 In the paper \cite{voicoiso}, we constructed a dominating rational map
 $$\mu: F_1(Y)\times F_1(Y)\dashrightarrow F'_3(Y).$$
 Two non intersecting lines $l,\,l'$ in $Y$ generate
 a $\mathbb{P}^3$ which intersects $Y$ along a cubic surface $S$.
 The $\mathbb{P}^2$ of degree $3$ rational curves
 corresponding to $\mu(l,l')$ is the linear system
 $|h+l-l'|$ on $S$, where $h=\mathcal{O}_S(1)$.
 It follows from this formula, Mumford's theorem and the fact that the
 holomorphic $2$-forms on the considered varieties come from a $(3,1)$-class on $Y$ by the corresponding  incidence
 correspondences,  that
 \begin{eqnarray}\label{eqmoinsmu*}\mu^*\sigma_{F'_3(Y)}={\rm pr}_1^*\sigma_{F_1(Y)}-{\rm pr}_2^*\sigma_{F_1(Y)}.
 \end{eqnarray}

 Together with Case \ref{1}, this immediately gives us a surface decomposition for
 $F'_3(Y)$. Indeed, we have the surface decomposition
 $\phi:\Gamma\rightarrow F_1(Y),\,\psi:\Gamma\rightarrow \Sigma\times \Sigma$ for $F_1(Y)$
 of (\ref{eqdecompF1}). The maps  satisfy
 \begin{eqnarray}\label{eqformetrie} \phi^*\sigma_{F_1(Y)}=\psi^*({\rm pr}_1^*\sigma_\Sigma+{\rm pr}_2^*\sigma_\Sigma).
 \end{eqnarray}

 Taking products, we get
 $$\phi':\Gamma\times \Gamma \rightarrow F_1(Y)\times F_1(Y),\,\,
 \psi':\Gamma\times \Gamma\rightarrow\Sigma\times \Sigma\times \Sigma\times\Sigma.$$
 Composing the first map with $\mu$ and desingularizing, we get
 \begin{eqnarray}
 \label{eqdefpoursurfF3}
 \phi'':\widetilde{\Gamma\times \Gamma} \rightarrow F_3(Y),\,\,
 \psi'':\widetilde{\Gamma\times \Gamma}\rightarrow\Sigma\times \Sigma\times \Sigma \times \Sigma.
 \end{eqnarray}
 By (\ref{eqformetrie}) and (\ref{eqmoinsmu*}), the morphisms in (\ref{eqdefpoursurfF3}) satisfy
 \begin{eqnarray}
 \label{eqdefpoursurfF3avecform} {\phi''}^*\sigma_{F_3(Y)}={\psi''}^*({\rm pr}_1^*\sigma_\Sigma+{\rm pr}_2^*\sigma_\Sigma-{\rm pr}_3^*\sigma_\Sigma-{\rm pr}_4^*\sigma_\Sigma),
 \end{eqnarray}
 which gives the desired decomposition in case \ref{4}.

 We now turn to the LSV hyper-K\"ahler fourfold $J(Y)$. It is a $10$-dimensional hyper-K\"ahler manifold associated to a general cubic fourfold $Y$. As it has a Lagrangian fibration, we will be able to use the triangle  variety described in Section \ref{secjacfib}.
 Another ingredient we will use is the following:
 \begin{lemm}\label{leFdansLSV} There exists a codimension $3$ algebraically coisotropic subvariety
 of $J(Y)$ which is birational to a $\mathbb{P}^{3}$-bundle over $F_1(Y)$.
 \end{lemm}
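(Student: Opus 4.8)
The plan is to take for the required subvariety the relative Fano surface of lines sitting inside $J(Y)$, and to observe that it is birationally a $\mathbb{P}^3$-bundle over $F_1(Y)$, essentially because a line contained in a hyperplane section of $Y$ is the same datum as a point of $F_1(Y)$ together with a hyperplane through it. Recall that $J(Y)$ is a $10$-dimensional hyper-K\"ahler manifold with a Lagrangian fibration $\pi:J(Y)\rightarrow (\mathbb{P}^5)^*$ whose fiber over a point $[H]$ in the locus $U$ of smooth hyperplane sections is (a torsor over) the intermediate Jacobian of the cubic threefold $Y_H=Y\cap H$; for $[H]\in U$ the Abel--Jacobi map ${\rm AJ}_H$ embeds the Fano surface $F_1(Y_H)$ into that fiber (Clemens--Griffiths). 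Letting $[H]$ run over $U$ produces a subvariety $\mathcal{F}_U\subset \pi^{-1}(U)$; I set $\mathcal{F}:=\overline{\mathcal{F}_U}\subset J(Y)$. Over $U$ the fibers of $\mathcal{F}\rightarrow (\mathbb{P}^5)^*$ are the surfaces $F_1(Y_H)$, so $\dim\mathcal{F}=5+2=7$ and $\mathcal{F}$ has codimension $3$ in $J(Y)$.

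First I would identify $\mathcal{F}$ birationally. Let $\mathcal{W}:=\{(\ell,[H])\in F_1(Y)\times (\mathbb{P}^5)^*:\ \ell\subset H\}$, with first projection $p:\mathcal{W}\rightarrow F_1(Y)$: the fiber of $p$ over $\ell$ is the $\mathbb{P}^3$ of hyperplanes of $\mathbb{P}^5$ containing $\ell$, so $p$ is a $\mathbb{P}^3$-bundle (the projectivization of the dual of the rank-$4$ quotient of the trivial bundle $V_6\otimes\mathcal{O}$ by the tautological rank-$2$ subbundle over $F_1(Y)$), and $\dim\mathcal{W}=4+3=7$. There is a rational map $\nu:\mathcal{W}\dashrightarrow J(Y)$, defined over the preimage under $p$ of the preimage of $U$, sending $(\ell,[H])$ to ${\rm AJ}_H(\ell)\in\pi^{-1}([H])$; it satisfies $\pi\circ\nu={\rm pr}_2$ and has image $\mathcal{F}$. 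It is generically injective: if $\nu(\ell,[H])=\nu(\ell',[H'])$ then applying $\pi$ gives $[H]=[H']$, and then ${\rm AJ}_H(\ell)={\rm AJ}_H(\ell')$ forces $\ell=\ell'$ by injectivity of ${\rm AJ}_H$ on $F_1(Y_H)$. Hence $\nu$ is birational onto $\mathcal{F}$, so $\mathcal{F}$ is birational to the $\mathbb{P}^3$-bundle $p:\mathcal{W}\rightarrow F_1(Y)$, which is the asserted birational model.

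It remains to see that $\mathcal{F}$ is algebraically coisotropic of codimension $3$, the witnessing fibration being $\phi:=p\circ\nu^{-1}:\mathcal{F}\dashrightarrow F_1(Y)$ (a rational fibration onto the smooth $4$-fold $F_1(Y)$, and $4=2\cdot 5-2\cdot 3$, as required). For a projective bundle over a base $M$, and hence for any variety birational to such a bundle, the group $H^{2,0}$ of a smooth model is the pull-back of $H^{2,0}(M)$; applying this to $p:\mathcal{W}\rightarrow F_1(Y)$ and to the birational map $\nu$ shows that the pull-back of $\sigma_{J(Y)}$ to a resolution of $\mathcal{F}$ equals the pull-back of a unique $\omega\in H^{2,0}(F_1(Y))$ — that is, $\sigma_{J(Y)\mid \mathcal{F}_{\rm reg}}=\phi_{\rm reg}^*\omega$, the defining equation of an algebraically coisotropic subvariety, the characteristic foliation having as leaves the fibers of $p$, which are rational, hence algebraic. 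Finally $\omega\neq 0$: a smooth point of $\mathcal{F}$ has a $7$-dimensional tangent space inside the $10$-dimensional tangent space of $J(Y)$, and $7>5$ forces this subspace to be non-isotropic for the holomorphic symplectic form, so $\sigma_{J(Y)\mid \mathcal{F}}\neq 0$; then necessarily $\sigma_{J(Y)\mid \mathcal{F}}$ has generic rank $4=2\cdot 7-10$, as a coisotropic $7$-fold must. (Alternatively, the descent of $\sigma_{J(Y)\mid\mathcal{F}}$ to the base follows from the argument of Proposition \ref{triangalelagrangien}.)

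The only genuinely non-formal input — and the main point to pin down — is the structure of $J(Y)$ quoted from \cite{lsv}: that the intermediate Jacobian fibration of $Y$ has a smooth hyper-K\"ahler compactification $\pi:J(Y)\rightarrow (\mathbb{P}^5)^*$, on which (for the appropriate $J$-torsor, so that no global section of $\mathcal{F}_U\rightarrow U$ need be assumed) the relative Abel--Jacobi embedding of the relative Fano surface is defined over $U$ and has a well-defined closure $\mathcal{F}$. Once this is granted, the proof is the dimension count plus the $\mathbb{P}^3$-bundle identification above, together with the automatic non-isotropy of any $7$-dimensional subvariety of a $10$-dimensional holomorphic symplectic manifold.
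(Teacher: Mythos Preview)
Your argument is correct and follows essentially the same route as the paper: both take the incidence variety $\mathcal{W}=\{(\ell,[H]):\ell\subset H\}$, recognize it as a $\mathbb{P}^3$-bundle over $F_1(Y)$, map it into $J(Y)$ fiberwise via the Abel--Jacobi map, and conclude the image is algebraically coisotropic because every holomorphic $2$-form on a $\mathbb{P}^3$-bundle descends to the base. The only cosmetic difference is that the paper normalizes the Abel--Jacobi map by sending $\ell$ to $\Psi_{Y_H}(3\ell-h^2)$ (a cycle homologous to $0$, so the image lands in the intermediate Jacobian itself rather than a torsor), whereas you handle this by invoking the torsor structure; both are fine.
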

 \begin{proof} For each line $l\subset Y\subset  \mathbb{P}^5$, there is a $\mathbb{P}^3\subset(\mathbb{P}^5)^*$ of hyperplane sections of $Y$ containing $l$.
 This determines a $\mathbb{P}^3$-bundle $P\rightarrow F_1(Y)$.
 Each of these hyperplanes $H$ determines a hyperplane section $Y_H$ of
 $Y$. Then $l\subset Y_H$ and the $1$-cycle
 $3l-h^2$ is homologous to $0$ on $Y_H$, at least assuming
 $Y_H$ smooth, which allows to define a point
 $\Psi_{Y_H}(3l-h^2)\in J(Y_H)$ using the Abel-Jacobi map $\Psi_{Y_H}$ of $Y_H$.
 As $J(Y)$ is fibered over $(\mathbb{P}^5)^*$ into intermediate Jacobians
 $J(Y_H)$, we thus
 constructed the desired  rational map $P\dashrightarrow J(Y)$. It is not hard to see that
 this map is birational onto its image $P'$ which has thus dimension $7$. As the only holomorphic $2$-forms on $P$ are those coming from $F_1(Y)$, we conclude that $P'\subset J(Y)$ is
 algebraically coisotropic.
 \end{proof}
\begin{coro}\label{coropourIJ} There exists an algebraically coisotropic subvariety $Z$ of
$J(Y)$ which has codimension $4$ (dimension $6$). This variety dominates $(\mathbb{P}^5)^*$.
\end{coro}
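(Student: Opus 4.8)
The plan is to obtain $Z$ by restricting the $\mathbb{P}^3$-bundle furnished by Lemma \ref{leFdansLSV} to an algebraically coisotropic divisor of the Fano variety of lines. Recall from that lemma that $P'\subset J(Y)$ has codimension $3$ and is birational to a $\mathbb{P}^3$-bundle $\pi:P\to F_1(Y)$, the birational map $P\dashrightarrow P'$ being induced by $(l,H)\mapsto \Psi_{Y_H}(3l-h^2)$, and that every holomorphic $2$-form on $P$ is pulled back from $F_1(Y)$, so that $\sigma_{J(Y)}$ restricted to $P'$ is the pull-back of $\sigma_{F_1(Y)}$ along $\pi$. On the other hand, as recalled in the proof of case \ref{1} above, $F_1(Y)$ carries an ample, uniruled, algebraically coisotropic divisor $D$: there is a smooth model $\nu:E\to D$ and a morphism $\tau:E\to \Sigma$ onto a (possibly singular) surface with $\nu^*\sigma_{F_1(Y)}=\tau^*\sigma_\Sigma$, where $\sigma_\Sigma$ is generically of maximal rank $2$ (diagram (\ref{numerodiagpourEF})); desingularizing $\Sigma$ we may assume it smooth.

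First I would set $Z$ to be the image in $J(Y)$ of $P_E:=P\times_{F_1(Y)}E$ under the composite $P_E\to P\dashrightarrow J(Y)$. Since $P_E\to E$ is a $\mathbb{P}^3$-bundle, $\dim E=3$, and $P\dashrightarrow J(Y)$ is generically injective, we get $\dim Z=6$, so $Z$ is of codimension $4$ in $J(Y)$. Pulling back $\sigma_{J(Y)}$ to $P_E$ gives, by the displayed equalities above, a $2$-form that is itself pulled back from $\Sigma$ via $P_E\to E\to\Sigma$; hence $\sigma_{J(Y)}$ restricted to $Z$ is the pull-back of the rank $2$ form $\sigma_\Sigma$ under a rational fibration $Z\dashrightarrow\Sigma$ onto a surface. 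Thus $Z$ is coisotropic of codimension $4$, and it is in fact algebraically coisotropic: its characteristic foliation is the $\mathbb{P}^3$-bundle over the characteristic foliation of $D$, whose leaves are algebraic because $D$ was chosen algebraically coisotropic.

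It remains to check that $Z$ dominates $(\mathbb{P}^5)^*$, and this is the only step that is not purely formal. The composite $P'\to J(Y)\to (\mathbb{P}^5)^*$ sends a pair $(l,H)$ (with $l\subset Y$ a line and $H$ a hyperplane such that $l\subset Y_H$) to $H$, at least over the locus where $Y_H$ is smooth, and that suffices for a dominance statement. Restricting to $Z$, dominance amounts to the assertion that for general $H\in(\mathbb{P}^5)^*$ there is a line $l\in D$ with $l\subset Y_H$, i.e. that $F_1(Y_H)\cap D\neq\emptyset$. Now $F_1(Y_H)$, the Fano surface of lines of the cubic threefold $Y_H=Y\cap H$, is a positive-dimensional closed subvariety of $F_1(Y)$, and since $D$ is ample it meets every such subvariety; hence $F_1(Y_H)\cap D\neq\emptyset$ for every $H$, so $Z\to(\mathbb{P}^5)^*$ is surjective. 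The point to watch is precisely this use of ampleness of $D$: restricting the $\mathbb{P}^3$-bundle to a divisor of $F_1(Y)$ could a priori destroy surjectivity onto the base of the Lagrangian fibration $J(Y)\to(\mathbb{P}^5)^*$, and it is ampleness of the uniruled divisor $D$ (available by the construction recalled in case \ref{1}) that prevents this.
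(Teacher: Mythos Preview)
Your construction of $Z$ as the image of $P_E=P\times_{F_1(Y)}E$ in $J(Y)$ is exactly the one in the paper (there written $P_{E'}$, with $E'=D$), and your verification that $Z$ is algebraically coisotropic via the composite fibration $P_E\to E\to\Sigma$ is the same as well. The only genuine difference is in the dominance argument: you use the ampleness of $D$ on $F_1(Y)$ to conclude that every Fano surface $F_1(Y_H)\subset F_1(Y)$ meets $D$, hence that $P_E\to(\mathbb{P}^5)^*$ is surjective. The paper instead argues by a fibre-dimension count, quoting from \cite{lsv} that no hyperplane section $Y_H$ contains a $2$-dimensional family of lines in $E'$, so that every fibre of $P_{E'}\to(\mathbb{P}^5)^*$ has dimension $\leq 1$ and the image must be all of $(\mathbb{P}^5)^*$. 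Your route is cleaner and avoids the external citation; the paper's route has the minor advantage that it simultaneously shows the fibres $Z_H$ are curves, which is used a few lines later in the surface decomposition of $J(Y)$.
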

\begin{proof}  We use for this the existence of the uniruled divisor $E'\subset F_1(Y)$ appearing in (\ref{numerodiagpourEF}).
Let now $P_{E'}$ be the inverse image of $E'$ in $P$ and let $Z$ be its image in $J(Y)$.
We have to prove that $Z$ dominates $(\mathbb{P}^5)^*$. This is saying that
any hyperplane section $Y_H$ of $Y$ contains a line residual to a special line
of $Y$, which is implied by the fact  that no hyperplane section of $Y$ contains
a $2$-dimensional family of lines residual to a special line
of $Y$. The last statement is proved in \cite{lsv}.
\end{proof}
We now construct a surface decomposition for $J(Y)$:
Let $Z$ be as in Corollary \ref{coropourIJ}, so $Z$ is fibered into curves over $B$.
We use the sum map  on the fibers of the fibration
$J(Y)\rightarrow B:=(\mathbb{P}^5)^*$.
We then get
a morphism:
$$\mu_{Z,5}:Z\times_B\ldots\times_BZ\dashrightarrow J(Y),$$
$$(a_1,\ldots,a_5)\mapsto \sum_ia_i.$$

We first observe that $\mu_5$ is dominant. This is because the fibers
of $J(Y)\rightarrow B$ are generically irreducible abelian varieties
and the fibers of $Z\rightarrow B$ are curves $Z_H$  which must generate
$J(Y_H)$, which is $5$-dimensional.
Finally, it remains to prove that the construction above provides
a surface decomposition for $J(Y)$. First of all, by Proposition \ref{triangalelagrangien},
for the relative sum map
$$\mu_5:J(Y)\times_B\ldots\times_BJ(Y)\dashrightarrow J(Y),$$
$$(a_1,\ldots,a_5)\mapsto \sum_ia_i,$$
one has
\begin{eqnarray}\label{equtile}\mu_5^*(\sigma_{J(Y)})=\sum_i{\rm pr}_i^*\sigma_{J(Y)}.
\end{eqnarray}
Next we use the rational map $f: Z\rightarrow \Sigma$ which is the composition of
$Z\rightarrow E'\subset F_1(Y)$, with $E'$ birational to $E$, and  ${\tau_E}:  E{\rightarrow} \Sigma$. We clearly have
\begin{eqnarray}\label{equtile2}f^*\sigma_\Sigma=\sigma_{J\mid Z}.
\end{eqnarray} Next $f$ induces
a morphism
$$f_5:Z\times_B\ldots\times_BZ\rightarrow \Sigma^5$$
and combining (\ref{equtile}) and (\ref{equtile2}), one concludes that
the diagram
\begin{eqnarray}\label{numerodiag}
  \xymatrix{
&Z\times_B\ldots\times_BZ\ar[d]^{f_5}\ar[r]^{\,\,\,\,\,\,\,\,\,\,\mu_5}&J(Y)\\
&\Sigma^5&}
\end{eqnarray}
provides a surface decomposition of $J(Y)$.
 \end{proof}

Coll\`{e}ge de France, 3 rue d'Ulm, 75005 Paris FRANCE

claire.voisin@imj-prg.fr
    \end{document}